\pgfplotsset{compat=1.18}
\crefname{equation}{}{}
\theoremstyle{plain}
\newtheorem{theorem}{Theorem}
\newtheorem{lemma}[theorem]{Lemma}
\newtheorem{corollary}[theorem]{Corollary}
\theoremstyle{definition}
\newtheorem{definition}[theorem]{Definition}
\newtheorem{remark}[theorem]{Remark}
\newtheorem{example}[theorem]{Example}
\newcommand{\R}{\mathbb{R}}
\newcommand{\N}{\mathbb{N}}
\newcommand{\norm}[1]{\left\Vert#1\right\Vert}
\newcommand{\normsmall}[1]{\Vert#1\Vert}
\newcommand{\Xnorm}[1]{\left\Vert#1\right\Vert_X}
\newcommand{\xnorm}[1]{\Vert#1\Vert_X}
\newcommand{\setnorm}[1]{|#1|}
\newcommand{\bracket}[1]{\left[#1\right]}
\newcommand{\set}[1]{\left\lbrace#1\right\rbrace}
\newcommand{\setsmall}[1]{\lbrace#1\rbrace}
\newcommand{\klammern}[1]{\left(#1\right)}
\newcommand{\floor}[1]{\lfloor#1\rfloor}
\newcommand{\Floor}[1]{\left\lfloor#1\right\rfloor}
\newcommand{\ceiling}[1]{\lceil#1\rceil}
\newcommand{\diff}{\mathop{}\!\mathrm{d}}
\newcommand\numberthis{\addtocounter{equation}{1}\tag{\theequation}}
\newcommand{\ds}{\displaystyle}
\newcommand{\define}[1]{\emph{#1}}
\newcommand{\variation}[1]{\Var\left(#1\right)}
\newcommand{\discrete}{\theta}
\newcommand{\Newline}{\displaybreak[0]\\}
\newcommand{\CP}[1]{$\textnormal{(CP}\colon#1\textnormal{)}$}
\newcommand{\C}{{C([0, T]; X)}}
\newcommand{\Anorm}[1]{\left|#1\right|_A}
\newcommand{\truestatements}[1]{\begin{enumerate}[{\normalfont (a)}]#1\end{enumerate}}
\DeclareMathOperator{\range}{range}
\DeclareMathOperator{\dom}{dom}
\DeclareMathOperator{\supp}{supp}
\DeclareMathOperator{\Var}{Var}
\DeclareMathOperator{\essVar}{essVar}
\DeclareMathOperator{\generalizeddom}{\widetilde{dom}}
\title{Differences of solutions of implicit Euler schemes with accretive operators on Banach spaces}
\author{Johann Beurich}
\date{}
\begin{document}

\maketitle

\begin{abstract}
    We give an upper bound for the difference of two solutions of Euler schemes approximating the Cauchy problem
    \[\begin{cases}
	\dot{u}(t) + Au(t) \ni f(t) \qquad (t \in [0, T]), \\
	u(0) = u^0,
    \end{cases}\]
    where $A \subseteq X \times X$ is a quasi-accretive operator on a Banach space $X$, $T > 0$, $f \in L^1(0, T; X)$ and $u^0 \in X$.
    This upper bound generalizes a result from Kobayashi, who established an upper bound for the problem with $f = 0$ in \cite{Kobayashi75}. We show, that the upper bound can be used to establish existence and uniqueness of Euler solutions as limits of solutions of Euler schemes as well as regularity of Euler solutions.
\end{abstract}

\section{Introduction}

Let $x$ be a Banach space with norm $\Xnorm{\cdot}$. We consider the Cauchy problem
\[ \tag{$\mathrm{CP} \colon f, u^0$} \label{Cauchy-Problem}
\begin{cases}
	\dot{u}(t) + Au(t) \ni f(t) \qquad (t \in [0, T]), \\
	u(0) = u^0,
\end{cases}
\]
where $A \subseteq X \times X$, $T > 0$, $f \in L^1(0, T; X)$ and $u^0 \in X$. We assume that the operator $A$ is accretive of type $\omega \in \R$. We call uniform limits of solutions of implicit Euler schemes corresponding to the Cauchy problem, {Euler solutions}.
The fundamental questions of existence and uniqueness of Euler solutions were first discussed in the late 1960s and early 1970s. 
Crandall and Liggett \cite{CrandallLiggett1971} showed in 1971 existence and uniqueness of Euler solutions, if $f = 0$ and $A$ 
satisfies some range condition.

For $f \neq 0$ and $A$ being $m$-accretive of type $\omega$, Crandall and Evans were able to show existence and uniqueness of Euler solutions in 1975 \cite{CrandallEvans1975} by comparing norms of differences of solutions of Euler schemes to exact solutions of boundary value problems involving the differential operator $\partial /\partial s+\partial /\partial \tau$.
In the same year Kobayashi \cite{Kobayashi75} found an elegant way of estimating the difference of two solutions of Euler schemes, which gave a direct way of showing existence and uniqueness of Euler solutions for $f = 0$. He also established the rate of convergence $O({|\pi|}^{1/2})$, where $|\pi|$ is the mesh size of the time partition $\pi$ used in the Euler schemes. Kobayashi was aware, that his results overlapped with the results from Crandall and Evans in \cite{CrandallEvans1975}. He wrote \cite[Remark 2.2]{Kobayashi75}:
\blockquote{
	After the preparation of this manuscript, Prof. M. Crandall informed me that Crandall and Evans \cite{CrandallEvans1975} proved Theorem 2.1 by an entirely different method, which is interesting in itself. They treat a more general evolution equation
	[...]
	where $f\in L^{1}(0, T;X)$ is given. Our method is also applicable to this case.
}
However, Kobayashi's result stated in \cite{Kobayashi75} is not sufficient to show convergence of solutions of Euler schemes for $f \neq 0$. Crandall and Evans also mention this. They wrote \cite{CrandallEvans1975}:
\blockquote{
	Kobayashi's note came to the attention of the authors after most of the research in the current paper was complete. There is some minor intersection of our development with that of \cite{Kobayashi75}. The case $f \neq 0$ seems genuinely more complex than the case $f = 0$, and our main point is not only Theorem 1.2 but its proof, which is of independent interest.
}
Our main result \Cref{main-result} is a generalization of Kobayashi's result.
We obtain an explicit estimate for the norm of differences fo solutions of implicit Euler schemes even in the case $f \neq 0$.
From there one deduces easily existence of solutions, a priori estimates (obtained earlier by Nochetto and Savar{\'e} \cite{NoSa2006}), stability of solutions and regularity.

\section{Euler solutions and accretive operators}

We use the following notation.
We call any set $A \subseteq X \times X$ an \define{operator} on $X$ and we define
	\begin{align*}
		\dom A &\coloneqq \set{u \in X \colon (u, v) \in A \text{ for some } v \in X } , \\
		\range A &\coloneqq \set{v \in X \colon(u, v) \in A \text{ for some } u \in X} , \\
		Au & \coloneqq \set{v \in X \colon (u, v) \in A} \quad \text{for } u \in X , \\
		\lambda A &\coloneqq \set{(u, \lambda v) \colon (u, v) \in A} \quad \text{for } \lambda \in \R, \\
		A + B &\coloneqq \set{(u, v+w) \colon (u, v) \in A \text{ and } (u, w) \in B} \quad \text{for } B \subseteq X \times X , \\
		A + x &\coloneqq \set{(u, v+x) \colon (u, v) \in A} \quad \text{for } x \in X , \\
		A^{-1} &\coloneqq \set{(v, u) \colon (u, v) \in A}  \quad \text{and} \\
		I &\coloneqq \set{(u, u) \colon u \in X} .
	\end{align*}
Let $a, b \in \R$, $a < b$. Any finite subset \[ \set{t_0, t_1, \ldots, t_N} \subseteq [a, b] \] containing $a$ and $b$ is called a \define{partition} of the compact interval $[a, b]$. We usually think, without loss of generality, that the elements $t_0, t_1, \ldots, t_N \in \pi$ are ordered, and we write
\[ \pi \colon a = t_0 < t_1 < \ldots < t_N = b . \]
For every partition $\pi \colon a = t_0 < t_1 < \ldots < t_N = b$ we define the \define{time steps}
\[ h_i \coloneqq t_{i+1} - t_i \]
for every $i \in \set{0, \ldots, N-1}$ and the \define{mesh size} of the partition
\[ |\pi| \coloneqq \sup \set{h_i : i \in \set{0, \ldots, N-1}} . \]
We further define the \define{floor function} $\floor{\cdot}_{\pi} \colon \R \to \R$ by
\[ \floor{s}_{\pi} \coloneqq \begin{cases}
    t_i &\text{if $s \in [t_i, t_{i+1})$, $i \in \set{0, \ldots, N-1}$,} \\
    s &\text{if $s \not\in [a, b)$},
\end{cases} \]
and the \define{ceiling function} $\ceiling{\cdot}_{\pi} \colon \R \to \R$ by
\[ \ceiling{s}_{\pi} \coloneqq \begin{cases}
    t_{i+1} &\text{if $s \in [t_i, t_{i+1})$, $i \in \set{0, \ldots, N-1}$,} \\
    s &\text{if $s \not\in [a, b)$}.
\end{cases} \]
A function $f \colon [a, b] \to X$ is \define{adapted to the partition $\pi$}, if
\[ \text{$f$ is constant on $[t_i, t_{i+1})$ for every $i \in \set{0, \ldots, N-1}$.} \]
A \define{discretization} $\discrete$ is any triple of the form $(\pi_{\discrete} , f_\discrete , u_\discrete^0)$, where $\pi_\discrete$ is a partition of the interval $[0, T]$, $f_\discrete \colon [0, T] \to X$ is a step function adapted to the partition $\pi_\discrete$ and $u_\discrete^0 \in X$.

For a given operator $A \subseteq X \times X$ and a discretization $\discrete = (\pi_{\discrete} , f_\discrete , u_\discrete^0 )$ we consider the implicit Euler scheme
\begin{equation}
    \begin{cases} \ds
        \frac{u_\discrete(t_{i+1})-u_\discrete(t_i)}{h_i} + A u_\discrete(t_{i+1}) \ni f_\discrete(t_{i}) \text{ for $i \in \set{0, \ldots , N-1}$} , \\
        u_\discrete (0) = u_\discrete^0 .
    \end{cases}
    \label{Euler-scheme} \tag{$E_\discrete$}
\end{equation}

We say that $u_\discrete \in C([0,T]; X)$ is a \define{solution of the implicit Euler scheme \Cref{Euler-scheme}}, if \Cref{Euler-scheme} holds and $u_\discrete$ is affine on the intervals $[t_i, t_{i+1}]$, that is,
\[ u_\discrete (t) = \frac{t_{i+1}-t}{t_{i+1}-t_i} u_\discrete (t_{i}) + \frac{t-t_i}{t_{i+1}-t_i} u_\discrete (t_{i+1}) \]
for all $t \in [t_i, t_{i+1}]$ and all $i \in \set{0, \ldots, N-1}$.

Let $f \in L^1(0, T; X)$ and $u^0 \in X$. We call a function $u \in C([0, T];X)$ an \define{Euler solution} of \Cref{Cauchy-Problem}
	if there exists a sequence of discretizations $(\discrete_n) = ((\pi_{\discrete_n} , f_{\discrete_n} , u_{\discrete_n}^0 ))$ and a sequence $(u_{\discrete_n})_n$ in $C([0, T]; X)$, such that $u_{\discrete_n}$ is a solution of the implicit Euler scheme $(E_{\discrete_n})$ for all $n \in \N$,
	\begin{align*}
		&\lim_{n \to \infty} |\pi_{\discrete_n}| = 0, \\
		&\lim_{n \to \infty} \norm{f_{\discrete_n} - f}_{L^1(0, T; X)} = 0 \text{ and} \\
		&\lim_{n \to \infty} \norm{u_{\discrete_n} - u}_{C([0, T]; X)} = 0 ,
	\end{align*}
In the literature Euler solutions of \Cref{Cauchy-Problem} are often called mild solutions of \Cref{Cauchy-Problem}.

The \define{bracket} for $u, v \in X$ is defined as
\[ \bracket{u, v} \coloneqq \inf_{\lambda > 0} \frac{ \Xnorm{u + \lambda v} - \Xnorm{u}}{\lambda} . \]
For properties of the bracket we refer the reader to \cite{Bar10}.

An operator $A \subseteq X \times X$ is \define{accretive of type $\omega \in \R$} if
	\[ \bracket{u - \hat u, v - \hat v} + \omega \Xnorm{u - \hat u} \ge 0 \numberthis \label{accretive-definition} \]
	for every $(u,v), (\hat u, \hat v) \in A$.
 The operator $A$ is \define{accretive} if $A$ is accretive of type 0 and $A$ is \define{quasi-accretive} if $A$ is accretive of type $\omega$ for some $\omega \in \R$.
	
	If $A$ is accretive of type $\omega \in \R$ and
	\[ \range (I + \lambda A) = X \]
	for all $\lambda > 0$ with $\lambda \omega < 1$, then $A$ is \define{$m$-accretive of type $\omega$}. The operator $A$ is \define{$m$-accretive} if $A$ is $m$-accretive of type 0 and $A$ is \define{quasi-$m$-accretive} if $A$ is $m$-accretive of type $\omega$ for some $\omega \in \R$.
	
	A quasi-accretive operator $A$ is said to satisfy the \define{range condition} if there exists $\lambda_0 > 0$ such that
	\[ \overline{\dom A} \subseteq \range(I + \lambda A) \]
	for every $\lambda \in (0, \lambda_0)$. Clearly, every quasi-$m$-accretive operator satisfies the range condition.

\section{Differences of solutions of implicit Euler schemes}

In this section we take two discretizations $\discrete = (\pi_\discrete, f_\discrete, u_\discrete^0)$ and $\hat \discrete = (\pi_{\hat\discrete}, f_{\hat\discrete}, u_{\hat\discrete}^0)$. Let $u_\discrete \in \C$ be a solution of the implicit Euler scheme \Cref{Euler-scheme} and $u_{\hat\discrete} \in \C$ be a solution of $(E_{\hat\discrete})$.
For $i \in \set{0,\ldots,N}$ and $j \in \setsmall{0,\ldots,\hat{N}}$ we define
\[ a_{i,j} \coloneqq \xnorm{u_\discrete (t_i) - u_{\hat{\discrete}} (\hat{t}_j)} . \]
Our goal is to find an estimate for $a_{i,j}$, which only depends on the discretizations $\discrete$ and $\hat\discrete$.

To see that $a_{i, j}$ gets small, if $|t_i - \hat t_j|$, $|\pi_\discrete|$, $|\pi_{\hat\discrete}|$, $\xnorm{u_\discrete^0 - u_{\hat\discrete}^0}$ and $\normsmall{f_\discrete - f_{\hat\discrete}}_{L^1(0, T; X)}$ are small, we want to obtain an upper bound, which works for all partitions with small mesh sizes $|\pi_\discrete|$ and $|\pi_{\hat\discrete}|$.

\subsection{An implicit upper bound} \label{section-2-1-implicit-uppter-bound}

\begin{lemma} \label{induktionsschritt}
For $i \in \set{0,\ldots,N-1}$ and $j \in \setsmall{0,\ldots,\hat{N}-1}$ we have
\begin{align*}
    (1 - (h_i \land \hat{h}_j)\omega) a_{i+1,j+1} \le {}&\bigg(1- \frac{\hat{h}_j}{h_i}\bigg)^+ a_{i+1,j} + \bigg(1- \frac{h_i}{\hat{h}_j}\bigg)^+ a_{i,j+1} + \frac{h_i \land \hat{h}_j}{h_i \lor \hat{h}_j} a_{i,j} \\
    & + (h_i \land \hat{h}_j) \bracket{u_\discrete (t_{i+1}) - u_{\hat{\discrete}} (\hat{t}_{j+1}) , f_\discrete (t_i) - f_{\hat{\discrete}} (\hat{t}_j)} . 
 \numberthis \label{omegaIstNochLinks}
\end{align*}
\end{lemma}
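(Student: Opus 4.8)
The plan is to unwind the two Euler steps into the inclusions that define them, feed the difference of the two ``velocities'' into the accretivity inequality \eqref{accretive-definition}, and then process the resulting bracket with its elementary calculus rules (positive homogeneity and subadditivity in the second slot, $\bracket{u, \alpha u + w} = \alpha\Xnorm{u} + \bracket{u, w}$, and $\bracket{u, w} \le \Xnorm{w}$; see \cite{Bar10}). Since exchanging the two discretizations transposes the array $(a_{i,j})$, leaves $h_i \land \hat h_j$, $h_i \lor \hat h_j$ and, because $\bracket{u, v} = \bracket{-u, -v}$, the bracket term invariant, and merely swaps the two $(\cdot)^+$-coefficients, it suffices by symmetry to treat the case $h_i \ge \hat h_j$; then $h_i \land \hat h_j = \hat h_j$, $h_i \lor \hat h_j = h_i$, $(1 - h_i/\hat h_j)^+ = 0$ and $(1 - \hat h_j/h_i)^+ = 1 - \hat h_j/h_i \ge 0$. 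For brevity write $x = u_\discrete(t_{i+1})$, $y = u_\discrete(t_i)$, $\hat x = u_{\hat\discrete}(\hat t_{j+1})$, $\hat y = u_{\hat\discrete}(\hat t_j)$.

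From \eqref{Euler-scheme} we read off $v \coloneqq f_\discrete(t_i) - (x - y)/h_i \in Ax$ and, from the scheme for $\hat\discrete$, $\hat v \coloneqq f_{\hat\discrete}(\hat t_j) - (\hat x - \hat y)/\hat h_j \in A\hat x$. Multiplying the difference $v - \hat v$ by $\hat h_j$ and regrouping yields the algebraic identity
\[ \hat h_j(v - \hat v) = -(x - \hat x) + \hat h_j\big(f_\discrete(t_i) - f_{\hat\discrete}(\hat t_j)\big) + \frac{\hat h_j}{h_i}(y - \hat y) + \Big(1 - \frac{\hat h_j}{h_i}\Big)(x - \hat y), \]
which one verifies by matching the coefficients of $x$, $y$, $\hat x$, $\hat y$ — the requirement that $\hat x$ occur with coefficient $1$ pins the decomposition down uniquely. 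Establishing this identity, with the three remainder vectors appearing with \emph{nonnegative} coefficients, is the crux of the computation.

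Now apply accretivity of type $\omega$ to the pairs $(x, v), (\hat x, \hat v) \in A$, multiply by $\hat h_j > 0$, and use positive homogeneity of the bracket to obtain $\bracket{x - \hat x,\, \hat h_j(v - \hat v)} \ge -\hat h_j\omega\Xnorm{x - \hat x}$. Substituting the identity above, using $\bracket{u, -u + w} = -\Xnorm{u} + \bracket{u, w}$ with $u = x - \hat x$, then subadditivity of $\bracket{u, \cdot}$, then positive homogeneity once more (legitimate since $\hat h_j \ge 0$, $\hat h_j/h_i \ge 0$ and $1 - \hat h_j/h_i \ge 0$ in the case at hand), and finally $\bracket{u, w} \le \Xnorm{w}$ on the two norm terms only (keeping the $f$-difference inside a bracket), one arrives at
\[ (1 - \hat h_j\omega)\Xnorm{x - \hat x} \le \hat h_j\bracket{x - \hat x,\, f_\discrete(t_i) - f_{\hat\discrete}(\hat t_j)} + \frac{\hat h_j}{h_i}\Xnorm{y - \hat y} + \Big(1 - \frac{\hat h_j}{h_i}\Big)\Xnorm{x - \hat y}. \]
Recalling $\Xnorm{x - \hat x} = a_{i+1,j+1}$, $\Xnorm{y - \hat y} = a_{i,j}$, $\Xnorm{x - \hat y} = a_{i+1,j}$ and inserting $\hat h_j = h_i \land \hat h_j$, $\hat h_j/h_i = (h_i \land \hat h_j)/(h_i \lor \hat h_j)$, $1 - \hat h_j/h_i = (1 - \hat h_j/h_i)^+$ and $0 = (1 - h_i/\hat h_j)^+$ turns this into \eqref{omegaIstNochLinks}.

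The main obstacle is bookkeeping rather than depth: one must choose the coefficients in the decomposition of $\hat h_j(v - \hat v)$ so that they are at once algebraically correct and sign-definite, and then keep track of the direction of each bracket estimate together with the nonnegativity required for homogeneity. This sign structure is precisely what produces the $(\cdot)^+$ and the $\land/\lor$ pattern on the right-hand side of the lemma, and the reduction to the case $h_i \ge \hat h_j$ is what lets one avoid juggling both cases simultaneously.
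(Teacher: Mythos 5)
Your proof is correct and follows the same route as the paper's: you read off $v \in Au_\discrete(t_{i+1})$ and $\hat v \in Au_{\hat\discrete}(\hat t_{j+1})$ from the two Euler steps, apply accretivity of type $\omega$, multiply by $\hat h_j$ (in the case $\hat h_j \le h_i$), decompose $\hat h_j(v - \hat v)$ into exactly the same four terms with nonnegative coefficients, and peel off the $-\Xnorm{u_\discrete(t_{i+1}) - u_{\hat\discrete}(\hat t_{j+1})}$ contribution before bounding the remaining bracket terms by norms. The only cosmetic difference is that you name $v,\hat v$ explicitly and record the coefficient check as a separate identity, whereas the paper carries out the regrouping inline inside the bracket.
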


\begin{proof}
Let us first assume $\hat{h}_j \le h_i$. Using the Euler schemes \Cref{Euler-scheme} and ($E_{\hat\discrete}$), that $A$ is accretive of type $\omega$ and properties of the bracket (see \cite[Proposition 3.7]{Bar10}), we get
\begin{align*}
    0 \le{}& \hat{h}_j \Bigg[ u_\discrete (t_{i+1}) - u_{\hat{\discrete}} (\hat{t}_{j+1}) , f_{\discrete} (t_i) - \frac{u_\discrete (t_{i+1}) - u_\discrete (t_i)}{h_i} - f_{\hat{\discrete}} (\hat{t}_j) + \frac{u_{\hat{\discrete}} (\hat{t}_{j+1}) - u_{\hat{\discrete}} (\hat{t}_j)}{\hat{h}_j} \Bigg] \\*
    & + \hat{h}_j \omega \Xnorm{u_\discrete (t_{i+1}) - u_{\hat{\discrete}} (\hat{t}_{j+1})} \\
    {}={}& \Bigg[ u_\discrete (t_{i+1}) - u_{\hat{\discrete}} (\hat{t}_{j+1}) , \hat{h}_j ( f_{\discrete} (t_i) - f_{\hat{\discrete}} (\hat{t}_j) ) - (u_\discrete (t_{i+1}) - u_{\hat{\discrete}} (\hat{t}_{j+1})) \\*
    & + \bigg(1- \frac{\hat{h}_j}{h_i}\bigg) (u_\discrete (t_{i+1}) - u_{\hat{\discrete}} (\hat{t}_j)) + \frac{\hat{h}_j}{h_i} (u_\discrete (t_i) - u_{\hat{\discrete}} (\hat{t}_j)) \Bigg] + \hat{h}_j \omega a_{i+1,j+1} \\
    {}\le{} & \hat{h}_j \bracket{u_\discrete (t_{i+1}) - u_{\hat{\discrete}} (\hat{t}_{j+1}) , f_\discrete (t_i) - f_{\hat{\discrete}} (\hat{t}_j)} - a_{i+1,j+1} \\*
    &+ \bigg(1- \frac{\hat{h}_j}{h_i}\bigg) a_{i+1,j} + \frac{\hat{h}_j}{h_i} a_{i,j} + \hat{h}_j \omega a_{i+1,j+1} .
\end{align*}
Rearranging the terms gives us the desired inequality.
The case $\hat{h}_j > h_i$ can be treated analogously.
\end{proof}

\begin{remark}
	For $i \in \set{0,\ldots,N-1}$ and $j \in \setsmall{0,\ldots,\hat{N}-1}$ we also have
	\begin{align*}
		(1 - (h_i \land \hat{h}_j)\omega) a_{i+1,j+1} \le {}& \frac{h_i}{h_i + \hat h_j} a_{i+1,j} + \frac{\hat h_j}{h_i + \hat h_j} a_{i,j+1} \\
		& + \frac{h_i \hat h_j}{h_i + \hat h_j} \bracket{u_\discrete (t_{i+1}) - u_{\hat{\discrete}} (\hat{t}_{j+1}) , f_\discrete (t_i) - f_{\hat{\discrete}} (\hat{t}_j)} .
	\end{align*}
	Many authors work with this version of an iterative estimate for $a_{i+1, j+1}$.
\end{remark}

\begin{definition}
	We define the continuous and strictly increasing function $\varphi \colon (-\infty, 1) \to \R$ by
\[ \varphi(x) \coloneqq \begin{cases} \ds \frac{-\log(1-x)}{x} &\text{if } x \neq 0, \\ 1 &\text{if } x = 0. \end{cases} \]
\begin{figure}[htbp]
\centering
\begin{tikzpicture}[scale=1.5]
\draw[->] (-1.5, 0) -- (1.5, 0) node[right] {$x$};
\draw[->] (0, -0.3) -- (0, 2.5);
\draw (1,0.05) -- (1,-0.05) node[below] {1};
\draw (0.05,1) -- (-0.05,1) node[left] {1};
\draw[domain=-1.5:0.895, smooth, variable=\x, very thick] plot ({\x}, {-ln(1-\x)/\x});
\draw (0.83, 2.67) node {$\varphi(x)$};
\end{tikzpicture}
\caption{Graph of $\varphi$.}
\end{figure}
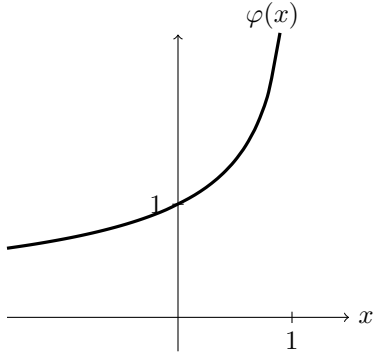
\end{definition}

\begin{lemma} \label{bessereOmegaAbschaetzung}
If $(|\pi_{\discrete}| \land |\pi_{\hat\discrete}|) \omega < 1$, then
\begin{align*}
    a_{i+1,j+1} \le {} & \mathopen{\exp} \left( \varphi((|\pi_\discrete|\land|\pi_{\hat{\discrete}}|)\omega) (h_i \land \hat{h}_j) \omega \right) \Bigg( \bigg(1- \frac{\hat{h}_j}{h_i}\bigg)^+ a_{i+1,j} + \bigg(1- \frac{h_i}{\hat{h}_j}\bigg)^+ a_{i,j+1} \\ & + \frac{h_i \land \hat{h}_j}{h_i \lor \hat{h}_j} a_{i,j} + (h_i \land \hat{h}_j) \bracket{u_\discrete (t_{i+1}) - u_{\hat{\discrete}} (\hat{t}_{j+1}) , f_\discrete (t_i) - f_{\hat{\discrete}} (\hat{t}_j)} \Bigg)
\end{align*}
for $i \in \set{0,\ldots,N-1}$ and $j \in \setsmall{0,\ldots,\hat{N}-1}$.
\end{lemma}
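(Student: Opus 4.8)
The plan is to derive the claim from \Cref{induktionsschritt} by converting the prefactor $\bigl(1-(h_i\land\hat h_j)\omega\bigr)^{-1}$ into the exponential appearing in the statement. The starting point is the identity
\[ \frac{1}{1-x} = \exp\bigl(\varphi(x)\,x\bigr)\qquad (x<1), \]
which is immediate from the definition $\varphi(x)=-\log(1-x)/x$ for $x\neq 0$, and which also holds at $x=0$ since $\varphi(0)=1$. I would apply this with $x=(h_i\land\hat h_j)\omega$. This value is $<1$: if $\omega>0$ it is $\le(|\pi_\discrete|\land|\pi_{\hat\discrete}|)\omega<1$ by hypothesis, and if $\omega\le 0$ it is $\le 0<1$; in either case $1-(h_i\land\hat h_j)\omega>0$, so the prefactor in \Cref{induktionsschritt} is a genuine positive number.

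The heart of the argument is then the elementary estimate
\[ \varphi\bigl((h_i\land\hat h_j)\omega\bigr)\,(h_i\land\hat h_j)\omega \;\le\; \varphi\bigl((|\pi_\discrete|\land|\pi_{\hat\discrete}|)\omega\bigr)\,(h_i\land\hat h_j)\omega, \]
which I would prove by a case distinction on the sign of $\omega$, using that $\varphi$ is increasing and that $h_i\land\hat h_j\le|\pi_\discrete|\land|\pi_{\hat\discrete}|$ (because $h_i\le|\pi_\discrete|$ and $\hat h_j\le|\pi_{\hat\discrete}|$). If $\omega\ge 0$, then $0\le(h_i\land\hat h_j)\omega\le(|\pi_\discrete|\land|\pi_{\hat\discrete}|)\omega$, so monotonicity of $\varphi$ makes both factors move in the favourable direction and the inequality holds. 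If $\omega<0$, then $(h_i\land\hat h_j)\omega\ge(|\pi_\discrete|\land|\pi_{\hat\discrete}|)\omega$, so $\varphi$ is \emph{larger} at the left argument; but the multiplier $(h_i\land\hat h_j)\omega$ is now negative, which reverses the inequality and again gives the claim. Combining this with the identity above yields
\[ \frac{1}{1-(h_i\land\hat h_j)\omega}\;\le\;\exp\Bigl(\varphi\bigl((|\pi_\discrete|\land|\pi_{\hat\discrete}|)\omega\bigr)(h_i\land\hat h_j)\omega\Bigr). \]

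To finish, I would observe that the right-hand side of the inequality in \Cref{induktionsschritt} is nonnegative, since it bounds above $\bigl(1-(h_i\land\hat h_j)\omega\bigr)a_{i+1,j+1}$, a product of two nonnegative quantities. Dividing the inequality of \Cref{induktionsschritt} by $1-(h_i\land\hat h_j)\omega>0$ and then replacing the positive prefactor by the larger exponential — which is permissible precisely because it multiplies a nonnegative expression — gives exactly the asserted bound. The only step needing genuine care is the sign analysis for $\omega$ in the middle paragraph; the rest is routine bookkeeping, so I do not expect any serious obstacle.
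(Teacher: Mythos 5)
Your proof takes essentially the same route as the paper: apply \Cref{induktionsschritt}, divide by $1 - (h_i\land\hat h_j)\omega > 0$, rewrite the resulting prefactor as $\exp(\varphi((h_i\land\hat h_j)\omega)(h_i\land\hat h_j)\omega)$, and enlarge it to $\exp(\varphi((|\pi_\discrete|\land|\pi_{\hat\discrete}|)\omega)(h_i\land\hat h_j)\omega)$ via monotonicity of $\varphi$ and a sign case distinction for $\omega$. Your observation that the bracketed expression on the right of \Cref{omegaIstNochLinks} is nonnegative (so that replacing the prefactor by a larger one preserves the inequality) is a small but genuine point of care that the paper leaves implicit; otherwise the two arguments coincide.
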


\begin{proof}
By assumption we have $1 - (h_i \land \hat{h}_j) \omega > 0$, so we can use \Cref{induktionsschritt} and divide both sides of \Cref{omegaIstNochLinks} by $1 - (h_i \land \hat{h}_j) \omega $. Since $\exp$ and $\varphi$ are increasing, we get
\begin{align*}
	\frac{1}{1 - (h_i \land \hat{h}_j) \omega} &= \mathopen{\exp} \left( - \log (1 - (h_i \land \hat{h}_j) \omega) \right) \\
	&= \mathopen{\exp} \left( \varphi((h_i \land \hat{h}_j)\omega) (h_i \land \hat{h}_j) \omega \right) \\
	&\le \mathopen{\exp} \left( \varphi((|\pi_\discrete|\land|\pi_{\hat{\discrete}}|)\omega) (h_i \land \hat{h}_j) \omega \right).
\end{align*}
Note that for $\omega < 0$ we get $\varphi((h_i \land \hat{h}_j)\omega) \ge \varphi((|\pi_\discrete|\land|\pi_{\hat{\discrete}}|)\omega)$, which still works for the estimate above. This completes the proof.
\end{proof}

\begin{lemma}\label{induktionsanfang-v2}
If $|\pi_\discrete| \omega < 1$, then for every $(u, v) \in A$ and for every $i \in \set{0, \ldots , N}$ we have
\begin{align*}
	\Xnorm{u_\discrete(t_i) - u} \le {} & \mathopen{\exp} \left({\varphi(|\pi_\discrete| \omega) t_i \omega} \right) \xnorm{u_\discrete^0 - u} \\*
 & + \int_0^{t_i} \mathopen{\exp} \left({\varphi(|\pi_\discrete| \omega) (t_i - \floor{\tau}_{\pi_\discrete} ) \omega} \right) \bracket{ u_\discrete ( \ceiling{\tau}_{\pi_\discrete} ) - u , f_\discrete (\tau) - v } \diff\tau .
 \numberthis \label{induktionsanfang_i}
\end{align*}
If $|\pi_{\hat\discrete}| \omega < 1$, then, as above, for every $(u, v) \in A$ and every $j \in \setsmall{0, \ldots, \hat N}$ we have
\begin{align*}
	\xnorm{u_{\hat\discrete}(\hat t_j) - u} \le {} & \mathopen{\exp} \left({\varphi(|\pi_{\hat\discrete}| \omega) \hat t_j \omega} \right) \xnorm{u_{\hat\discrete}^0 - u} \\*
 & + \int_0^{\hat t_j} \mathopen{\exp} \left({\varphi(|\pi_{\hat\discrete}| \omega) (\hat t_j - \floor{\tau}_{\pi_{\hat\discrete}} ) \omega} \right) \bracket{ u_{\hat\discrete} ( \ceiling{\tau}_{\pi_{\hat\discrete}} ) - u , f_{\hat\discrete} (\tau) - v } \diff\tau .
 \numberthis \label{induktionsanfang_j}
\end{align*}
\end{lemma}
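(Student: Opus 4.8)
The plan is to extract a single one-step estimate and then iterate it. First I would prove that, under the hypothesis $|\pi_\discrete|\omega < 1$, for every $(u,v)\in A$ and every $i\in\set{0,\ldots,N-1}$ we have
\[
	\Xnorm{u_\discrete(t_{i+1}) - u} \le \mathopen{\exp}\bigl(\varphi(|\pi_\discrete|\omega)h_i\omega\bigr)\Bigl(\xnorm{u_\discrete(t_i) - u} + h_i\bracket{u_\discrete(t_{i+1}) - u,\,f_\discrete(t_i) - v}\Bigr).
\]
This can be obtained from \Cref{bessereOmegaAbschaetzung} by choosing the second discretization trivially: let $\hat\discrete$ consist of the partition $\set{0,T}$, the step function $f_{\hat\discrete}\equiv v$ and $u_{\hat\discrete}^0 = u$. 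Since $(u,v)\in A$, the constant function $u_{\hat\discrete}\equiv u$ is a solution of $(E_{\hat\discrete})$, whence $a_{i,0} = a_{i,1} = \xnorm{u_\discrete(t_i) - u}$ for all $i$. As $\hat h_0 = T \ge |\pi_\discrete| \ge h_i$, we get $(1-\hat h_0/h_i)^+ = 0$, $h_i\land\hat h_0 = h_i$ and $|\pi_\discrete|\land|\pi_{\hat\discrete}| = |\pi_\discrete|$, while the two surviving coefficients multiply the same quantity and add up to $(1-h_i/\hat h_0)^+ + \tfrac{h_i\land\hat h_0}{h_i\lor\hat h_0} = (1-h_i/T) + h_i/T = 1$; inserting this into \Cref{bessereOmegaAbschaetzung} yields the displayed inequality. (Alternatively one reproves it directly from accretivity of type $\omega$ together with the sublinearity of $\bracket{x,\cdot}$ and $\bracket{x,-x} = -\xnorm{x}$, exactly as in the proof of \Cref{induktionsschritt}.)

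Next I would unfold the integral in \eqref{induktionsanfang_i}. Because $f_\discrete$ is adapted to $\pi_\discrete$ and $\floor{\tau}_{\pi_\discrete} = t_k$, $\ceiling{\tau}_{\pi_\discrete} = t_{k+1}$ for $\tau\in[t_k,t_{k+1})$, that integral equals $\sum_{k=0}^{i-1} h_k\,\mathopen{\exp}\bigl(\varphi(|\pi_\discrete|\omega)(t_i - t_k)\omega\bigr)\bracket{u_\discrete(t_{k+1}) - u,\,f_\discrete(t_k) - v}$, so \eqref{induktionsanfang_i} is equivalent to the discrete inequality
\[
	\xnorm{u_\discrete(t_i) - u} \le \mathopen{\exp}\bigl(\varphi(|\pi_\discrete|\omega)t_i\omega\bigr)\xnorm{u_\discrete^0 - u} + \sum_{k=0}^{i-1} h_k\,\mathopen{\exp}\bigl(\varphi(|\pi_\discrete|\omega)(t_i - t_k)\omega\bigr)\bracket{u_\discrete(t_{k+1}) - u,\,f_\discrete(t_k) - v},
\]
which I would prove by induction on $i$. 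The case $i=0$ is the triviality $\xnorm{u_\discrete^0 - u}\le\xnorm{u_\discrete^0 - u}$. For the step from $i$ to $i+1$ I substitute the inductive hypothesis into the one-step estimate above — which is legitimate since $\mathopen{\exp}(\varphi(|\pi_\discrete|\omega)h_i\omega) > 0$ and $\xnorm{u_\discrete(t_i) - u}$ appears there with a nonnegative coefficient — and then use $t_i + h_i = t_{i+1}$ and $e^{a}e^{b} = e^{a+b}$ to recognise the resulting bound as the claimed one at $i+1$, the new summand being precisely the term with $k = i$. Finally, \eqref{induktionsanfang_j} is \eqref{induktionsanfang_i} with $\discrete$ and $\hat\discrete$ interchanged, so it needs no separate argument.

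I do not expect a genuine obstacle; the one thing to watch is signs. The brackets $\bracket{u_\discrete(t_{k+1}) - u,\,f_\discrete(t_k) - v}$ need not be nonnegative, but throughout they occur only multiplied by the positive weights $h_k\,\mathopen{\exp}(\cdots)$, which is what keeps the induction monotone. The identity $\tfrac{1}{1-h_i\omega} = \mathopen{\exp}(\varphi(h_i\omega)h_i\omega) \le \mathopen{\exp}(\varphi(|\pi_\discrete|\omega)h_i\omega)$, whose proof splits according to the sign of $\omega$, is already part of the proof of \Cref{bessereOmegaAbschaetzung}, and routing the one-step estimate through that lemma is exactly what spares us from redoing it.
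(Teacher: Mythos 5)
Your proof is correct and follows essentially the same route as the paper: reduce the claim to a one-step estimate obtained from \Cref{bessereOmegaAbschaetzung} with an auxiliary discretization whose Euler solution is the constant $u$, then iterate. The only cosmetic difference is your choice of the coarse partition $\set{0, T}$ for $\hat\discrete$ — the paper takes $\pi_{\hat\discrete} = \pi_\discrete$, which makes the three coefficients collapse to $(0, 0, 1)$ rather than $(0, 1 - h_i/T, h_i/T)$; your observation that $a_{i, 0} = a_{i, 1}$ makes the two choices interchangeable. Unfolding the integral into the sum $\sum_{k=0}^{i-1} h_k \exp(\cdots)[\cdots]$ is exactly the paper's bookkeeping, just written out.
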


\begin{proof}
We prove \Cref{induktionsanfang_i} by induction over $i$. For $i = 0$ both sides are equal to \( \Xnorm{u_\discrete^0 - u} \). If the statement is true for some $i \in \set{0, \ldots, N-1}$, then note that
\[ \frac{u-u}{h_i} + Au \ni v , \]
so we can use \Cref{bessereOmegaAbschaetzung} with the discretization $\hat \discrete = (\pi_\discrete, v, u)$ and constant solution $u_{\hat \discrete} = u$ of the implicit Euler scheme ($E_{\hat\discrete}$) to get
\begin{align*}
    \Xnorm{u_\discrete (t_{i+1}) - u} \le {} & \exp \klammern{\varphi (|\pi_\discrete| \omega) h_i \omega} \big( \Xnorm{u_\discrete (t_i) - u} + h_i \bracket{u_\discrete(t_{i+1})-u, f_\discrete(t_i) - v} \big) \\
    {}={}& \exp \klammern{\varphi (|\pi_\discrete| \omega) h_i \omega} \Xnorm{u_\discrete (t_i) - u} \\*
    & {} + \int_{t_i}^{t_{i+1}} \mathopen{\exp} \left({\varphi(|\pi_\discrete| \omega) (t_{i+1} - \floor{\tau}_{\pi_\discrete} ) \omega} \right) \bracket{ u_\discrete ( \ceiling{\tau}_{\pi_\discrete} ) - u , f_\discrete (\tau) - v } \diff\tau .
\end{align*}
and together with the induction hypothesis we get
\begin{align*}
	\Xnorm{u_\discrete(t_{i+1}) - u} \le {} & \mathopen{\exp} \left({\varphi(|\pi_\discrete| \omega) t_{i+1} \omega} \right) \xnorm{u_\discrete^0 - u} \\*
 & + \int_0^{t_{i+1}} \mathopen{\exp} \left({\varphi(|\pi_\discrete| \omega) (t_{i+1} - \floor{\tau}_{\pi_\discrete} ) \omega} \right) \bracket{ u_\discrete ( \ceiling{\tau}_{\pi_\discrete} ) - u , f_\discrete (\tau) - v } \diff\tau .
\end{align*}
This completes the induction. The inequality \Cref{induktionsanfang_j} can be shown analogously.
\end{proof}

We can now already establish an upper bound for $a_{i, j}$, if the partitions are equidistant.

\begin{lemma} \label{equidistant-time}
    Let $N \in \N_{> T\omega}$. If $\pi_\discrete = \pi_{\hat\discrete} = \set{0, \frac{1}{N}T, \frac{2}{N}T, \ldots, \frac{N-1}{N}T, T}$, then for every $(u, v) \in A$ and for every $i \in \setsmall{0, \ldots, N}$ and $j \in \setsmall{0, \ldots, N}$ we have
    \begin{align*}
        a_{i, j} \le{}& \exp({\varphi(|\pi_{\discrete}| \omega) t_i \omega}) \xnorm{u_\discrete^0-u} + \exp({\varphi(|\pi_{\hat\discrete}| \omega) \hat t_j \omega}) \xnorm{u_{\hat\discrete}^0-u} \\*
        &+ \int_0^{(t_i-\hat t_j)^+} \mathopen{\exp}\klammern{\varphi(|\pi_{\discrete}| \omega) (t_i - \floor{\tau}_{\pi_\discrete}) \omega} \bracket{u_{\discrete} (\ceiling{\tau}_{\pi_{\discrete}}) - u, f_{\discrete} (\tau) - v} \diff\tau \\*
        &+ \int_0^{(\hat t_j-t_i)^+} \exp({\varphi(|\pi_{\hat\discrete}| \omega) (\hat t_j - \floor{\hat\tau}_{\pi_{\hat\discrete}}) \omega}) [u_{\hat\discrete} (\ceiling{\hat\tau}_{\pi_{\hat\discrete}}) - u, f_{\hat\discrete} (\hat\tau) - v] \diff\hat\tau \\*
        &+ \int_0^{t_i \land \hat t_j} \exp({\varphi(|\pi_{\discrete}| \omega) ((t_i\land\hat t_j) - \floor{\tau}_{\pi_\discrete}) \omega}) \Big[ \mathopen{u_{\discrete}} \left(\ceiling{\tau}_{\pi_{\discrete}} + (t_i-\hat t_j)^+\right) \\*
        & \hphantom{+ \int_0^{t_i \land \hat t_j}} - \mathopen{u_{\hat\discrete}} \left( \ceiling{\tau}_{\pi_{\hat\discrete}} + (\hat t_j-t_i)^+ \right), \mathopen{f_{\discrete}} \klammern{\tau+(t_i-\hat t_j)^+} - \mathopen{f_{\hat\discrete}} \klammern{\tau+(\hat t_j-t_i)^+} \Big] \diff\tau .
    \end{align*}
\end{lemma}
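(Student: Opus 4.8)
The plan is to reduce the two–dimensional estimate for $a_{i,j}$ to the one–dimensional estimates of \Cref{induktionsanfang-v2} by walking down the diagonal of the space–time grid to a boundary point and splitting there by the triangle inequality. Write $h \coloneqq \frac{1}{N}T$, so $h = |\pi_\discrete| = |\pi_{\hat\discrete}|$, every time step of both partitions equals $h$, and $t_k = \hat t_k = kh$. Since $N > T\omega$ we have $h\omega < 1$, so \Cref{bessereOmegaAbschaetzung,induktionsanfang-v2} apply. As all time steps coincide, in \Cref{bessereOmegaAbschaetzung} the terms $(1 - \hat h_j/h_i)^+$, $(1 - h_i/\hat h_j)^+$ vanish, $\frac{h_i \land \hat h_j}{h_i \lor \hat h_j} = 1$, and $\exp(\varphi((|\pi_\discrete| \land |\pi_{\hat\discrete}|)\omega)(h_i \land \hat h_j)\omega) = \exp(\varphi(h\omega)h\omega) = (1-h\omega)^{-1}$, so that
\[
  a_{i+1,j+1} \le \frac{1}{1-h\omega}\Bigl( a_{i,j} + h\bracket{u_\discrete(t_{i+1}) - u_{\hat\discrete}(\hat t_{j+1}),\, f_\discrete(t_i) - f_{\hat\discrete}(\hat t_j)}\Bigr)
\]
for $i, j \in \set{0,\ldots,N-1}$.

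By symmetry (interchanging the roles of $\discrete$ and $\hat\discrete$, and using \Cref{induktionsanfang_j} instead of \Cref{induktionsanfang_i}) it suffices to treat $i \ge j$. Iterating the last display $j$ times along the diagonal $k \mapsto (i-j+k,\, k)$, $k = 0,\ldots,j$ (a routine induction on $k$), and using $(1-h\omega)^{-m} = \exp(\varphi(h\omega)\,mh\,\omega)$, which follows from $\varphi(h\omega)h\omega = -\log(1-h\omega)$, we obtain
\[
  a_{i,j} \le \exp\!\bigl(\varphi(h\omega) t_j \omega\bigr) a_{i-j,0} + \sum_{k=1}^{j}\frac{h}{(1-h\omega)^{j-k+1}}\bracket{u_\discrete(t_{i-j+k}) - u_{\hat\discrete}(\hat t_k),\, f_\discrete(t_{i-j+k-1}) - f_{\hat\discrete}(\hat t_{k-1})}.
\]
Here $\exp(\varphi(h\omega)t_j\omega) = \exp(\varphi(|\pi_{\hat\discrete}|\omega)\hat t_j\omega)$ is the weight appearing in the claimed bound, and in this case $t_j = t_i \land \hat t_j$ while $(\hat t_j - t_i)^+ = 0$.

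Since $u_{\hat\discrete}(\hat t_0) = u_{\hat\discrete}^0$, the triangle inequality gives $a_{i-j,0} \le \xnorm{u_\discrete(t_{i-j}) - u} + \xnorm{u_{\hat\discrete}^0 - u}$; the second summand times $\exp(\varphi(h\omega)t_j\omega)$ is exactly the second term of the claim. To $\xnorm{u_\discrete(t_{i-j}) - u}$ we apply \Cref{induktionsanfang_i} of \Cref{induktionsanfang-v2} with the given $(u,v) \in A$ and multiply through by $\exp(\varphi(h\omega)t_j\omega) = (1-h\omega)^{-j}$; absorbing this weight into the exponential inside the integral and using $t_{i-j} + t_j = t_i$ together with $t_{i-j} = t_i - \hat t_j = (t_i - \hat t_j)^+$ (valid since $i \ge j$), we get precisely the first term $\exp(\varphi(|\pi_\discrete|\omega) t_i \omega)\xnorm{u_\discrete^0 - u}$ and the third term of the claim, while the fourth term vanishes because $(\hat t_j - t_i)^+ = 0$.

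Finally, the diagonal sum is identified with the fifth term by writing it as the integral over $[0, t_j]$ of the step function equal to the $k$-th summand on $[t_{k-1}, t_k)$: there $\floor{\tau}_{\pi_\discrete} = t_{k-1}$ and $\ceiling{\tau}_{\pi_\discrete} = \ceiling{\tau}_{\pi_{\hat\discrete}} = t_k$, so $\exp(\varphi(h\omega)(t_j - \floor{\tau}_{\pi_\discrete})\omega) = (1-h\omega)^{-(j-k+1)}$, and equidistance yields $\ceiling{\tau}_{\pi_\discrete} + (t_i-\hat t_j)^+ = t_k + t_{i-j} = t_{i-j+k}$ as well as $f_\discrete(\tau + (t_i - \hat t_j)^+) = f_\discrete(t_{i-j+k-1})$, so that the integrand matches the fifth term of the claim verbatim (recall $t_j = t_i \land \hat t_j$ and $(\hat t_j-t_i)^+ = 0$). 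Collecting the five contributions proves the lemma. The main difficulty is purely bookkeeping: keeping the exponential weights, the shifted arguments $\ceiling{\tau}_\bullet + (t_i - \hat t_j)^\pm$, and the equidistance identities $t_a + t_b = t_{a+b}$ synchronized so the pieces reassemble into the stated formula; all analytic content already sits in \Cref{bessereOmegaAbschaetzung,induktionsanfang-v2}.
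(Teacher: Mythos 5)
Your proof is correct and takes essentially the same route as the paper: the paper runs a formal induction on $(i,j)$ (base cases $i=0$ or $j=0$ via \Cref{induktionsanfang-v2}, inductive step $(i,j)\to(i+1,j+1)$ via \Cref{bessereOmegaAbschaetzung}), which is precisely the diagonal walk you unroll explicitly down to $a_{i-j,0}$ before splitting by the triangle inequality and invoking \Cref{induktionsanfang_i}. Your symmetry reduction to $i \ge j$ and the bookkeeping identifying the diagonal sum with the integral in the fifth term are sound.
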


\begin{proof}
    We prove this by induction over $(i, j)$. Note that $|\pi_\discrete| \omega = \frac{T}{N} \omega < 1$. If $j = 0$, we can apply \Cref{bessereOmegaAbschaetzung} and use \Cref{induktionsanfang_i} to get
    \begin{align*}
        a_{i, 0} \le {} & \xnorm{u_\discrete(t_i) - u} + \xnorm{u_{\hat\discrete}^0 - u} \\
        \le {} & \mathopen{\exp} \left({\varphi(|\pi_\discrete| \omega) t_i \omega} \right) \xnorm{u_\discrete^0 - u} + \xnorm{u_{\hat\discrete}^0 - u} \\*
 & + \int_0^{t_i} \mathopen{\exp} \left({\varphi(|\pi_\discrete| \omega) (t_i - \floor{\tau}_{\pi_\discrete} ) \omega} \right) \bracket{ u_\discrete ( \ceiling{\tau}_{\pi_\discrete} ) - u , f_\discrete (\tau) - v } \diff\tau .
    \end{align*}
    If $i = 0$, we can use \Cref{induktionsanfang_j} analogously. If the statement is true for $(i, j)$, then note that $h_i = h_j = |\pi_\discrete| = |\pi_{\hat\discrete}|$, so by applying \Cref{bessereOmegaAbschaetzung}, we get
    \begin{align*}
        a_{i+1, j+1} \le {} & \exp ({ \varphi((|\pi_\discrete|)\omega) h_i \omega }) \left( a_{i,j} + h_i \bracket{u_\discrete (t_{i+1}) - u_{\hat{\discrete}} (\hat{t}_{j+1}) , f_\discrete (t_i) - f_{\hat{\discrete}} (\hat{t}_j)} \right) .
    \end{align*}
    By using
    \begin{align*}
        & \exp ({ \varphi((|\pi_\discrete|)\omega) h_i \omega }) h_i \bracket{u_\discrete (t_{i+1}) - u_{\hat{\discrete}} (\hat{t}_{j+1}) , f_\discrete (t_i) - f_{\hat{\discrete}} (\hat{t}_j)} \\
        {}={} & \int_{t_i \land \hat t_j}^{t_{i+1} \land \hat t_{j+1}} \exp({\varphi(|\pi_{\discrete}| \omega) ((t_{i+1}\land\hat t_{j+1}) - \floor{\tau}_{\pi_\discrete}) \omega}) \Big[ u_{\discrete} \left(\ceiling{\tau}_{\pi_{\discrete}} + (t_{i+1}-\hat t_{j+1})^+\right) \\*
        & - u_{\hat\discrete} \left( \ceiling{\tau}_{\pi_{\hat\discrete}} + (\hat t_{j+1}-t_{i+1})^+ \right), f_{\discrete} \klammern{\tau+(t_{i+1}-\hat t_{j+1})^+} - f_{\hat\discrete} \klammern{\tau+(\hat t_{j+1}-t_{i+1})^+} \Big] \diff\tau .
    \end{align*}
    and the induction hypothesis for $a_{i, j}$ as well as
    \begin{align*}
        (t_i - \hat t_j)^+ &= (t_{i+1}-\hat t_{j+1})^+ \text{ and} \\
        (\hat t_j - t_i)^+ &= (\hat t_{j+1}-t_{i+1})^+,
    \end{align*}
    we get the statement for $(i+1, j+1)$. This completes the induction.
\end{proof}

To get an upper bound for $a_{i, j}$, which holds for all partitions $\pi_\discrete$ and $\pi_{\hat\discrete}$, we use the following density function $\rho^{i, j}$.

\begin{definition} \label{definition-of-rho}
Let $\rho^{i,j} \colon \R^2 \to \R$ be recursively defined by
\[ \rho^{i,0} \coloneqq \mathbf{1}_{[0, t_i) \times [-1, 0)}  \numberthis \label{rho_definition_1} \]
for $i \in \set{0,\ldots,N}$,
\[ \rho^{0,j} \coloneqq \mathbf{1}_{[-1, 0) \times [0, \hat t_j)} \numberthis \label{rho_definition_2} \]
for $j \in \setsmall{0,\ldots,\hat{N}}$ and
\begin{align*}
    \rho^{i+1,j+1} \coloneqq {}&\bigg(1- \frac{\hat{h}_j}{h_i}\bigg)^+ \rho^{i+1,j} + \bigg(1- \frac{h_i}{\hat{h}_j}\bigg)^+ \rho^{i,j+1} + \frac{h_i \land \hat{h}_j}{h_i \lor \hat{h}_j} \rho^{i,j}  \\*
    &+ \frac{1}{h_i \lor \hat{h}_j} \mathbf{1}_{[t_i , t_{i+1}) \times [\hat{t}_j , \hat{t}_{j+1})} \numberthis \label{rho_definition_3}
\end{align*}
for $i \in \set{0,\ldots,N-1}$ and $j \in \setsmall{0,\ldots,\hat{N}-1}$.
\end{definition}

\begin{figure}
    \centering
\begin{tikzpicture}

\definecolor{darkgray176}{RGB}{176,176,176}

\begin{axis}[
colorbar,
colorbar style={ylabel={}},
colormap/viridis,
point meta max=2,
point meta min=0,
tick align=outside,
tick pos=left,
x grid style={darkgray176},
xmin=-1, xmax=4,
xtick style={color=black},
xtick={-1,0,4},
xticklabels = {-1,0,$t_i$},
y grid style={darkgray176},
ymin=-1, ymax=3,
ytick style={color=black},
ytick={-1,0,3},
yticklabels={-1,0,$\hat t_j$},
]
\addplot graphics [includegraphics cmd=\pgfimage,xmin=-1, xmax=4, ymin=-1, ymax=3] {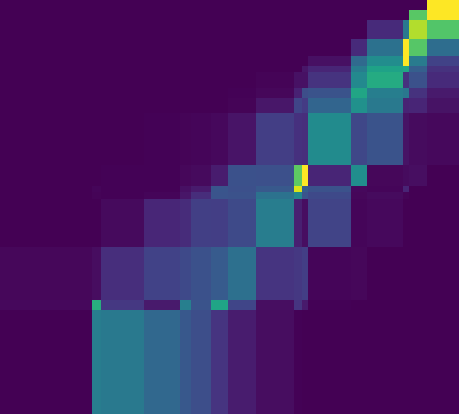};
\end{axis}

\end{tikzpicture}
\begin{tikzpicture}

\definecolor{darkgray176}{RGB}{176,176,176}

\begin{axis}[
colorbar,
colorbar style={ylabel={}},
colormap/viridis,
point meta max=2,
point meta min=0,
tick align=outside,
tick pos=left,
x grid style={darkgray176},
xmin=-1, xmax=4,
xtick style={color=black},
xtick={-1,0,4},
xticklabels={-1,0,$t_i$},
y grid style={darkgray176},
ymin=-1, ymax=3,
ytick style={color=black},
ytick={-1,0,3},
yticklabels={-1,0,$\hat t_j$},
]
\addplot graphics [includegraphics cmd=\pgfimage,xmin=-1, xmax=4, ymin=-1, ymax=3] {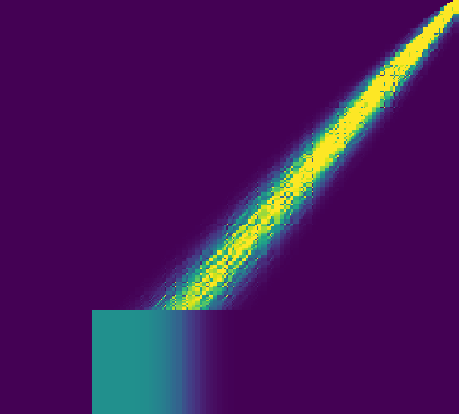};
\end{axis}

\end{tikzpicture}
    \caption{Plots of the density $\rho^{i, j}$ for different partitions $\pi_\discrete$ and $\pi_{\hat\discrete}$. For small mesh sizes $|\pi_\discrete|$ and $|\pi_{\hat\discrete}|$ the density $\rho^{i, j}$ is concentrated along the diagonal through the point $(t_i, \hat t_j)$.}
    \label{fig:rho-plots}
\end{figure}

\begin{lemma}\label{implicit-upper-bound}
	Let $(u,v) \in A$ and $g \in BV( -1, T; X)$ with $g (\tau) = v$ for $\tau < 0$. If $(|\pi_\discrete| \lor |\pi_{\hat\discrete}|) \omega < 1$, then for all $i \in \set{0, \ldots N}$ and $j \in \setsmall{0, \ldots, \hat N}$ we have
	\begin{align*}
		a_{i, j} \le {} & \mathopen{\exp}\klammern{\varphi((|\pi_\discrete| \lor |\pi_{\hat\discrete}|)\omega) (t_i+\hat t_j) \omega^+} \bigg( \xnorm{u_\discrete^0 - u} + \xnorm{u_{\hat\discrete}^0 - u} \\
		& {} + \norm{f_\discrete - g}_{L^1(0,t_i;X)} + \norm{f_{\hat\discrete} - g}_{L^1(0,\hat t_j;X)} + \int_{-1}^T \int_{-1}^T \rho^{i,j} (\tau, \hat\tau) \Xnorm{g(\tau) - g(\hat \tau)} \diff\hat\tau \diff\tau \bigg) .
	\end{align*}
\end{lemma}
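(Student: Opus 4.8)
The plan is to prove the inequality by induction on $i+j$, with all pairs satisfying $\min\set{i,j}=0$ as base cases and \Cref{bessereOmegaAbschaetzung} driving the inductive step; the recursion defining $\rho^{i,j}$ in \Cref{definition-of-rho} was built to mirror the recursion of \Cref{bessereOmegaAbschaetzung} term by term, so the work is mostly in checking that the various weights and norms fit together. Note first that the hypothesis $(|\pi_\discrete|\lor|\pi_{\hat\discrete}|)\omega<1$ forces $|\pi_\discrete|\omega<1$, $|\pi_{\hat\discrete}|\omega<1$ and $(|\pi_\discrete|\land|\pi_{\hat\discrete}|)\omega<1$ (trivially if $\omega\le0$, by monotonicity if $\omega>0$), so \Cref{induktionsanfang-v2} and \Cref{bessereOmegaAbschaetzung} may be used throughout. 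We shall also use repeatedly that $\varphi>0$ on $(-\infty,1)$, whence $\exp(\varphi((|\pi_\discrete|\lor|\pi_{\hat\discrete}|)\omega)s\omega^+)\ge1$ for every $s\ge0$, and that this quantity is multiplicative in $s$.

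\emph{Base case.} Take $j=0$; the case $i=0$ is symmetric. From the triangle inequality $a_{i,0}\le\xnorm{u_\discrete(t_i)-u}+\xnorm{u-u_{\hat\discrete}^0}$ I would bound the first term by \Cref{induktionsanfang_i}, estimate the bracket under the integral using the elementary property $\bracket{x,y}\le\xnorm{y}$ followed by $\xnorm{f_\discrete(\tau)-v}\le\xnorm{f_\discrete(\tau)-g(\tau)}+\xnorm{g(\tau)-v}$, and dominate each weight $\exp(\varphi(|\pi_\discrete|\omega)(t_i-\floor{\tau}_{\pi_\discrete})\omega)$ by $\exp(\varphi((|\pi_\discrete|\lor|\pi_{\hat\discrete}|)\omega)(t_i+\hat t_0)\omega^+)$. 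This last comparison (and the analogous one for the leading term) splits into $\omega\ge0$, where it follows from monotonicity of $\varphi$ and $\exp$ together with $t_i-\floor{\tau}_{\pi_\discrete}\le t_i$, and $\omega<0$, where the left side is $\le1$ and the right side equals $1$. Since $\hat t_0=0$ the summand $\normsmall{f_{\hat\discrete}-g}_{L^1(0,\hat t_0;X)}$ vanishes, and by \Cref{rho_definition_1} together with $g\equiv v$ on $[-1,0)$ we have $\int_{-1}^{T}\int_{-1}^{T}\rho^{i,0}(\tau,\hat\tau)\xnorm{g(\tau)-g(\hat\tau)}\diff\hat\tau\diff\tau=\int_0^{t_i}\xnorm{g(\tau)-v}\diff\tau$, so the two sides coincide; the term $\xnorm{u-u_{\hat\discrete}^0}$ costs nothing because the prefactor is $\ge1$.

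\emph{Inductive step.} Let $i,j\ge0$ and assume the estimate for $(i+1,j)$, $(i,j+1)$ and $(i,j)$, all of which have smaller index sum. Apply \Cref{bessereOmegaAbschaetzung} to $a_{i+1,j+1}$ and insert the three hypotheses. (a)~\emph{The exponential prefactors combine.} Using multiplicativity, $\exp(\varphi((|\pi_\discrete|\lor|\pi_{\hat\discrete}|)\omega)s\omega^+)\ge1$ for $s\ge0$, and the fact that $(1-\hat h_j/h_i)^+$ is nonzero only if $h_i\land\hat h_j=\hat h_j$ (symmetrically $(1-h_i/\hat h_j)^+$ only if $h_i\land\hat h_j=h_i$), one checks that $\exp(\varphi((|\pi_\discrete|\land|\pi_{\hat\discrete}|)\omega)(h_i\land\hat h_j)\omega)$ times any of the prefactors $\exp(\varphi((|\pi_\discrete|\lor|\pi_{\hat\discrete}|)\omega)(t_{i+1}+\hat t_j)\omega^+)$, $\exp(\varphi((|\pi_\discrete|\lor|\pi_{\hat\discrete}|)\omega)(t_i+\hat t_{j+1})\omega^+)$, $\exp(\varphi((|\pi_\discrete|\lor|\pi_{\hat\discrete}|)\omega)(t_i+\hat t_j)\omega^+)$, or times $1$ for the bracket term, is at most $\exp(\varphi((|\pi_\discrete|\lor|\pi_{\hat\discrete}|)\omega)(t_{i+1}+\hat t_{j+1})\omega^+)$; once more one separates $\omega\ge0$ (monotonicity of $\varphi$) from $\omega<0$ (left side $\le1$, right side $=1$). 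Hence the whole right-hand side acquires the common factor $\exp(\varphi((|\pi_\discrete|\lor|\pi_{\hat\discrete}|)\omega)(t_{i+1}+\hat t_{j+1})\omega^+)$. (b)~\emph{The main content adds up.} The coefficients $(1-\hat h_j/h_i)^+$, $(1-h_i/\hat h_j)^+$, $\frac{h_i\land\hat h_j}{h_i\lor\hat h_j}$ sum to $1$, which settles the $\xnorm{u_\discrete^0-u}$- and $\xnorm{u_{\hat\discrete}^0-u}$-terms and, combined with monotonicity of $t\mapsto\normsmall{f_\discrete-g}_{L^1(0,t;X)}$ and $t\mapsto\normsmall{f_{\hat\discrete}-g}_{L^1(0,t;X)}$, the two $L^1$-terms, leaving exactly the slack $\frac{h_i\land\hat h_j}{h_i}\int_{t_i}^{t_{i+1}}\xnorm{f_\discrete(t_i)-g(\tau)}\diff\tau$ and $\frac{h_i\land\hat h_j}{\hat h_j}\int_{\hat t_j}^{\hat t_{j+1}}\xnorm{f_{\hat\discrete}(\hat t_j)-g(\hat\tau)}\diff\hat\tau$. (c)~\emph{The bracket term is absorbed.} By \Cref{rho_definition_3}, $\int_{-1}^{T}\int_{-1}^{T}\rho^{i+1,j+1}(\tau,\hat\tau)\xnorm{g(\tau)-g(\hat\tau)}\diff\hat\tau\diff\tau$ equals the same convex combination of the three previous double integrals plus $\frac{1}{h_i\lor\hat h_j}\int_{t_i}^{t_{i+1}}\int_{\hat t_j}^{\hat t_{j+1}}\xnorm{g(\tau)-g(\hat\tau)}\diff\hat\tau\diff\tau$. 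Bounding the bracket term by $(h_i\land\hat h_j)\xnorm{f_\discrete(t_i)-f_{\hat\discrete}(\hat t_j)}$, rewriting it as $\frac{1}{h_i\lor\hat h_j}\int_{t_i}^{t_{i+1}}\int_{\hat t_j}^{\hat t_{j+1}}\xnorm{f_\discrete(t_i)-f_{\hat\discrete}(\hat t_j)}\diff\hat\tau\diff\tau$ (using $(h_i\land\hat h_j)(h_i\lor\hat h_j)=h_i\hat h_j$), and applying the pointwise triangle inequality $\xnorm{f_\discrete(t_i)-f_{\hat\discrete}(\hat t_j)}\le\xnorm{f_\discrete(t_i)-g(\tau)}+\xnorm{g(\tau)-g(\hat\tau)}+\xnorm{g(\hat\tau)-f_{\hat\discrete}(\hat t_j)}$ shows that the bracket contribution is covered by the new $\rho$-cell together with precisely the two slack integrals of (b). Assembling (a)--(c) yields the estimate for $(i+1,j+1)$ and closes the induction.

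The step I expect to be the true obstacle is the joint bookkeeping of (b) and (c): one must distribute the residual $L^1$-mass so that the triangle inequality converting $\xnorm{f_\discrete(t_i)-f_{\hat\discrete}(\hat t_j)}$ into $\xnorm{g(\tau)-g(\hat\tau)}$ leaves nothing over, which boils down to the identities $\frac{h_i\land\hat h_j}{h_i}=\frac{\hat h_j}{h_i\lor\hat h_j}$ and $\frac{h_i\land\hat h_j}{\hat h_j}=\frac{h_i}{h_i\lor\hat h_j}$ once single integrals over $[t_i,t_{i+1})$ or $[\hat t_j,\hat t_{j+1})$ are turned into double integrals over the cell $[t_i,t_{i+1})\times[\hat t_j,\hat t_{j+1})$. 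The handling of the exponential weights, by contrast, is routine given the monotonicity of $\varphi$ and $\exp$ already exploited in \Cref{bessereOmegaAbschaetzung}.
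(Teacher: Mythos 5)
Your proof is correct and follows essentially the same route as the paper's: induction over $(i,j)$ with base cases from \Cref{induktionsanfang-v2}, the inductive step driven by \Cref{bessereOmegaAbschaetzung}, the bracket term converted via the triangle inequality into the new $\rho$-cell plus $L^1$-slack over $[t_i,t_{i+1})$ and $[\hat t_j,\hat t_{j+1})$, and the identities $(1-\hat h_j/h_i)^+ + \hat h_j/(h_i\lor\hat h_j) = 1$ etc. closing the bookkeeping. The only cosmetic difference is that you split the exponential-weight comparison into $\omega\ge 0$ and $\omega<0$ cases explicitly (and invoke a vanishing-coefficient observation that turns out to be unnecessary), whereas the paper simply upgrades $\land$ to $\lor$ and $\omega$ to $\omega^+$ before combining factors.
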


\begin{proof}
We prove this by induction over $(i,j)$. If $j = 0$, we can use \Cref{induktionsanfang-v2} to get
\begin{align*}
	a_{i, 0} = {} & \xnorm{u_\discrete(t_i) - u_{\hat\discrete}^0} \\*
    \le {} & \Xnorm{u_\discrete(t_i) - u} + \xnorm{u_{\hat\discrete}^0-u} \\*
    \le {} & \mathopen{\exp} \left({\varphi(|\pi_\discrete| \omega) t_i \omega} \right) \xnorm{u_\discrete^0 - u} + \xnorm{u_{\hat\discrete}^0 - u} \\*
 & + \int_0^{t_i} \mathopen{\exp} \left({\varphi(|\pi_\discrete| \omega) (t_i - \floor{\tau}_{\pi_\discrete} ) \omega} \right) \bracket{ u_\discrete ( \ceiling{\tau}_{\pi_\discrete} ) - u , f_\discrete (\tau) - v } \diff\tau \\*
    \le {} & \mathopen{\exp}\klammern{\varphi((|\pi_\discrete| \lor |\pi_{\hat\discrete}|)\omega) t_i \omega^+} \bigg( \xnorm{u_\discrete^0 - u} + \xnorm{u_{\hat\discrete}^0 - u} + \int_0^{t_i} \Xnorm{ f_\discrete (\tau) - v } \diff\tau \bigg) \\*
	\le {} & \mathopen{\exp}\klammern{\varphi((|\pi_\discrete| \lor |\pi_{\hat\discrete}|)\omega) t_i \omega^+} \bigg( \xnorm{u_\discrete^0 - u} + \xnorm{u_{\hat\discrete}^0 - u} \\*
	& + \norm{f_\discrete - g}_{L^1(0,t_i;X)} + \int_{-1}^{T} \int_{-1}^{T} \rho^{i,0} (\tau, \hat\tau) \Xnorm{g(\tau) - g(\hat \tau)} \diff\hat\tau \diff\tau \bigg).
\end{align*}
The case $i = 0$ can be treated analogously. If we assume the statement to be true for $(i,j)$, $(i+1,j)$ and $(i, j+1)$, then we can use \Cref{bessereOmegaAbschaetzung} to get
\begin{align*}
	a_{i+1,j+1} \le {} & \mathopen{\exp} \left( \varphi((|\pi_\discrete|\lor|\pi_{\hat{\discrete}}|)\omega) (h_i \land \hat{h}_j) \omega^+ \right) \Bigg( \bigg(1- \frac{\hat{h}_j}{h_i}\bigg)^+ a_{i+1,j} + \bigg(1- \frac{h_i}{\hat{h}_j}\bigg)^+ a_{i,j+1} \\ & + \frac{h_i \land \hat{h}_j}{h_i \lor \hat{h}_j} a_{i,j} + (h_i \land \hat{h}_j) \bracket{u_\discrete (t_{i+1}) - u_{\hat{\discrete}} (\hat{t}_{j+1}) , f_\discrete (t_i) - f_{\hat{\discrete}} (\hat{t}_j)} \Bigg) .
\end{align*}
Note that
\begin{align*}
	&\phantom{{}={}} (h_i \land \hat{h}_j) \bracket{u_\discrete (t_{i+1}) - u_{\hat{\discrete}} (\hat{t}_{j+1}) , f_\discrete (t_i) - f_{\hat{\discrete}} (\hat{t}_j)} \\
	&\le (h_i \land \hat h_j) \xnorm{f_\discrete (t_i) - f_{\hat\discrete}(\hat t_j)} \\
	&= \frac{1}{h_i \lor \hat h_j} \int_{t_i}^{t_{i+1}} \int_{\hat t_j}^{\hat t_{j+1}} \Xnorm{f_\discrete (\tau) - g(\tau) + g(\tau) - g(\hat\tau) + g(\hat\tau) - f_{\hat\discrete}(\hat\tau)} \diff\hat\tau \diff\tau \\
	&\le \frac{\hat h_j}{h_i \lor \hat h_j} \norm{f_\discrete-g}_{L^1(t_i,t_{i+1};X)} + \frac{h_i}{hi \lor \hat h_j} \norm{f_{\hat\discrete}-g}_{L^1(\hat t_j,\hat t_{j+1};X)} \\*
	&\phantom{{}={}}+ \frac{1}{h_i \lor \hat h_j} \int_{-1}^{T} \int_{-1}^{T} \mathbf{1}_{[t_i , t_{i+1}) \times [\hat{t}_j , \hat{t}_{j+1})} (\tau, \hat\tau) \Xnorm{g(\tau) - g(\hat\tau)} \diff\hat\tau \diff\tau .
\end{align*}
By using the equalities
\begin{align*}
    1 &= \klammern{1- \frac{\hat{h}_j}{h_i}}^+ + \klammern{1- \frac{h_i}{\hat{h}_j}}^+ + \frac{h_i \land \hat{h}_j}{h_i \lor \hat{h}_j} \\
    &= \klammern{1- \frac{\hat{h}_j}{h_i}}^+ + \frac{\hat h_j}{h_i \lor \hat h_j} \\
    &= \klammern{1-\frac{h_i}{\hat h_j}}^+ + \frac{h_i}{h_i \lor \hat h_j}
\end{align*}
as well as the recursive definition of $\rho^{i+1, j+1}$ in \Cref{rho_definition_3} we get
\begin{align*}
	a_{i+1,j+1} \le {} & \mathopen{\exp}\klammern{\varphi((|\pi_\discrete| \lor |\pi_{\hat\discrete}|)\omega) (t_{i+1}+\hat t_{j+1}) \omega^+} \bigg( \xnorm{u_\discrete^0 - u} + \xnorm{u_{\hat\discrete}^0 - u} \\*
		& {} + \norm{f_\discrete - g}_{L^1(0,t_{i+1};X)} + \norm{f_{\hat\discrete} - g}_{L^1(0,\hat t_{j+1};X)} \\*
  & + \int_{-1}^T \int_{-1}^T \rho^{i+1,j+1} (\tau, \hat\tau) \Xnorm{g(\tau) - g(\hat \tau)} \diff\hat\tau \diff\tau \bigg) .
\end{align*}
	This completes the induction.
\end{proof}

\Cref{implicit-upper-bound} already achieves the goal of finding an upper bound for $a_{i, j}$, but it is not clear, whether the upper bound is small, if the mesh sizes $|\pi_\discrete|$ and $|\pi_{\hat\discrete}|$ are small. 
The next results show some properties of the density $\rho^{i,j}$ and lead to a good explicit estimate of the double integral
\[ \int_{-1}^T \int_{-1}^T \rho^{i,j} (\tau, \hat\tau) \Xnorm{g(\tau) - g(\hat \tau)} \diff\hat\tau \diff\tau . \]

\subsection{Properties of the density} \label{section-2-2-properties-of-the-density}

\begin{lemma} \label{rho-support}
For $i \in \set{0, \ldots, N}$ and $j \in \setsmall{0, \ldots, \hat N}$ we have
\[ \supp \rho^{i,j} \subseteq ( [-1, t_i] \times [-1, \hat t_j] ) \setminus ( [-1, 0) \times [-1, 0) ) . \]
\end{lemma}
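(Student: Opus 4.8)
The natural approach is induction on $(i,j)$, following the recursive definition of $\rho^{i,j}$ in \Cref{definition-of-rho}. The base cases are immediate from the explicit formulas: $\rho^{i,0} = \mathbf{1}_{[0,t_i)\times[-1,0)}$ has support in $[0,t_i]\times[-1,0]$, which is contained in $([-1,t_i]\times[-1,\hat t_0])\setminus([-1,0)\times[-1,0))$ since $\hat t_0 = 0$ and the support avoids the open square $[-1,0)\times[-1,0)$ (its first coordinate is $\ge 0$); symmetrically for $\rho^{0,j}$.

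For the induction step, suppose the claim holds for $(i+1,j)$, $(i,j+1)$ and $(i,j)$. From \Cref{rho_definition_3}, $\rho^{i+1,j+1}$ is a nonnegative combination of $\rho^{i+1,j}$, $\rho^{i,j+1}$, $\rho^{i,j}$ and the indicator $\mathbf{1}_{[t_i,t_{i+1})\times[\hat t_j,\hat t_{j+1})}$, so $\supp\rho^{i+1,j+1}$ is contained in the union of the four corresponding supports. By the induction hypothesis the first three are contained in $([-1,t_i]\times[-1,\hat t_j])\cup([-1,t_{i+1}]\times[-1,\hat t_j])\cup([-1,t_i]\times[-1,\hat t_{j+1}])$, all of which lie inside $[-1,t_{i+1}]\times[-1,\hat t_{j+1}]$; and the fourth support, $[t_i,t_{i+1}]\times[\hat t_j,\hat t_{j+1}]$, also lies inside $[-1,t_{i+1}]\times[-1,\hat t_{j+1}]$. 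So the "upper right" containment in $[-1,t_{i+1}]\times[-1,\hat t_{j+1}]$ is clear.

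The only slightly delicate point is the removal of the open square $[-1,0)\times[-1,0)$. Here one uses that $t_i,\hat t_j\ge 0$ for all $i,j$: the indicator $\mathbf{1}_{[t_i,t_{i+1})\times[\hat t_j,\hat t_{j+1})}$ is supported where both coordinates are $\ge 0$, hence disjoint from $[-1,0)\times[-1,0)$; and each of $\supp\rho^{i+1,j}$, $\supp\rho^{i,j+1}$, $\supp\rho^{i,j}$ is disjoint from $[-1,0)\times[-1,0)$ by the induction hypothesis. A union of sets each disjoint from $[-1,0)\times[-1,0)$ is again disjoint from it, which completes the step. I expect no real obstacle; the mild bookkeeping is just checking that the interval endpoints propagate correctly (e.g.\ passing from half-open indicator rectangles to their closed-rectangle supports, and using $t_i\le t_{i+1}$, $\hat t_j\le\hat t_{j+1}$), and that the base cases genuinely avoid the open square because their supported coordinates start at $0$, not at $-1$, on the relevant axis.
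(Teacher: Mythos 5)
Your proof is correct and follows essentially the same route as the paper's: induction on $(i,j)$, with the base cases read off from the explicit formulas for $\rho^{i,0}$ and $\rho^{0,j}$, and the inductive step using that the support of a nonnegative combination is contained in the union of the supports of the summands. Your extra discussion of why the union remains disjoint from $[-1,0)\times[-1,0)$ is a useful clarification but does not change the argument.
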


\begin{proof}
	This is a direct consequence of the definition of the density $\rho^{i, j}$. We prove this by induction over $(i,j)$. For $(i, j) = (0, 0)$ the statement is true since $\supp \rho^{0,0} = \emptyset$. For $i \in \set{1, \ldots, N}$ we have 
 \[ \supp \rho^{i,0} = [0,t_i] \times [-1,0] = ([-1, t_i] \times [-1, \hat t_0]) \setminus ( [-1, 0) \times [-1, 0) ) \]
 and for $j \in \setsmall{1, \ldots, \hat N}$ we have
 \[ \supp \rho^{0,j} = [-1,0] \times [0, \hat t_j] = ( [-1, t_0] \times [-1, \hat t_j] ) \setminus ( [-1, 0) \times [-1, 0) ) . \]
 If the statement is true for $(i, j)$, $(i+1, j)$ and $(i, j+1)$, then
 \begin{align*}
     \supp \rho^{i+1, j+1} &\subseteq \supp \rho^{i+1, j} \cup \supp \rho^{i, j+1} \cup \supp \rho^{i, j} \cup ( [t_i, t_{i+1}] \times [\hat t, \hat t_{i+1}] ) \\
    &\subseteq ( [-1, t_{i+1}] \times [-1, \hat t_{j+1}] ) \setminus ([-1, 0) \times [-1, 0)) .
 \end{align*}
 This concludes the induction.
\end{proof}

\begin{lemma}\label{mass-preservation}
Let $i \in \set{0,\ldots,N}$ and $j \in \setsmall{0,\ldots,\hat{N}}$. For all $\tau \ge 0$ we have
\[ \int_{-1}^{T} \rho^{i, j} (\tau, \hat \tau ) \diff\hat\tau = \mathbf{1}_{[0, t_i)} ( \tau ) , \]
and for all $\hat \tau \ge 0$ we have
\[ \int_{-1}^{T} \rho^{i, j} (\tau , \hat \tau ) \diff\tau = \mathbf{1}_{[0, \hat t_j)}(\hat \tau) . \]
\end{lemma}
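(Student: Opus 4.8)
The plan is to prove both identities simultaneously by induction over $(i,j)$, exploiting the recursive definition of $\rho^{i,j}$ in \Cref{definition-of-rho}. By symmetry it suffices to focus on the first identity, $\int_{-1}^{T} \rho^{i,j}(\tau,\hat\tau)\diff\hat\tau = \mathbf{1}_{[0,t_i)}(\tau)$ for $\tau\ge 0$; the second is handled the same way with the roles of the two variables exchanged. For the base cases, if $j=0$ then $\rho^{i,0} = \mathbf{1}_{[0,t_i)\times[-1,0)}$, so integrating out $\hat\tau$ over $[-1,T]$ gives exactly $\mathbf{1}_{[0,t_i)}(\tau)$ (the $\hat\tau$-slice $[-1,0)$ has length $1$). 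If $i=0$ then $\rho^{0,j} = \mathbf{1}_{[-1,0)\times[0,\hat t_j)}$, which for $\tau\ge 0$ is identically zero, matching $\mathbf{1}_{[0,t_0)}(\tau) = \mathbf{1}_{\{0\}\text{ or }\emptyset}$ — here one should note $t_0 = 0$ so $\mathbf{1}_{[0,t_0)} \equiv 0$, consistent with the empty integral.

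For the inductive step, assume the first identity holds for the pairs $(i,j)$, $(i+1,j)$ and $(i,j+1)$ (for the fixed relevant ranges of indices). Fix $\tau\ge 0$ and integrate the recursion \Cref{rho_definition_3} in $\hat\tau$ over $[-1,T]$. The first three terms, by the induction hypothesis, contribute
\[
\Bigl(1-\tfrac{\hat h_j}{h_i}\Bigr)^{+}\mathbf{1}_{[0,t_{i+1})}(\tau) + \Bigl(1-\tfrac{h_i}{\hat h_j}\Bigr)^{+}\mathbf{1}_{[0,t_i)}(\tau) + \tfrac{h_i\land\hat h_j}{h_i\lor\hat h_j}\mathbf{1}_{[0,t_i)}(\tau).
\]
The last term integrates to $\tfrac{1}{h_i\lor\hat h_j}\,\mathbf{1}_{[t_i,t_{i+1})}(\tau)\cdot\hat h_j = \tfrac{\hat h_j}{h_i\lor\hat h_j}\mathbf{1}_{[t_i,t_{i+1})}(\tau)$. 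One then splits into the cases $\tau\in[0,t_i)$, $\tau\in[t_i,t_{i+1})$, and $\tau\ge t_{i+1}$ and checks the sum equals $\mathbf{1}_{[0,t_{i+1})}(\tau)$ in each case. On $[0,t_i)$ the sum of coefficients is $\bigl(1-\tfrac{\hat h_j}{h_i}\bigr)^{+} + \bigl(1-\tfrac{h_i}{\hat h_j}\bigr)^{+} + \tfrac{h_i\land\hat h_j}{h_i\lor\hat h_j}$, which is exactly the first of the three elementary identities displayed in the proof of \Cref{implicit-upper-bound}, hence $=1$. On $[t_i,t_{i+1})$ only the first term (with $\mathbf 1_{[0,t_{i+1})}$) and the last term survive, giving $\bigl(1-\tfrac{\hat h_j}{h_i}\bigr)^{+} + \tfrac{\hat h_j}{h_i\lor\hat h_j} = 1$, again by one of those elementary identities. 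On $\tau\ge t_{i+1}$ everything vanishes. Thus $\int_{-1}^{T}\rho^{i+1,j+1}(\tau,\hat\tau)\diff\hat\tau = \mathbf{1}_{[0,t_{i+1})}(\tau)$, completing the induction.

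The argument for the second identity is entirely symmetric, using instead the elementary identity $1 = \bigl(1-\tfrac{h_i}{\hat h_j}\bigr)^{+} + \tfrac{h_i}{h_i\lor\hat h_j}$ when $\hat\tau\in[\hat t_j,\hat t_{j+1})$. I do not expect any serious obstacle here: the only thing to be careful about is the bookkeeping of which induction hypotheses are available and the correct handling of the degenerate slices at $\tau\in[-1,0)$ versus $\tau\ge 0$ (the hypothesis restricts to $\tau\ge 0$, which is what makes the $[-1,0)\times[0,\hat t_j)$ piece of $\rho^{0,j}$ invisible). The main "content" of the proof is simply recognizing that the coefficient identities already isolated in the proof of \Cref{implicit-upper-bound} are exactly what is needed to make the telescoping work, so the whole lemma is a short verification once the case split is set up.
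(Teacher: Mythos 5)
Your proof is correct and follows essentially the same route as the paper: induction over $(i,j)$, with the same two base cases and the same integration of the recursion \cref{rho_definition_3}, relying on the coefficient identities that the paper isolates in the proof of \cref{implicit-upper-bound}. The paper's own write-up simply asserts the resulting sum equals $\mathbf{1}_{[0,t_{i+1})}(\tau)$ without spelling out the case split on $\tau\in[0,t_i)$, $\tau\in[t_i,t_{i+1})$, $\tau\ge t_{i+1}$; your version makes that verification explicit, but the substance is identical.
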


\begin{proof}
	We prove the first statement by induction over $(i,j)$. Let $\tau \ge 0$. If $j = 0$, then
	\[ \int_{-1}^{T} \rho^{i, 0} (\tau, \hat \tau ) \diff\hat\tau = \int_{-1}^{T} \mathbf{1}_{[0, t_i)} (\tau) \mathbf{1}_{[-1,0)} (\hat \tau ) \diff\hat\tau = \mathbf{1}_{[0, t_i)} (\tau) . \]
	If $i = 0$, then
	\[ \int_{-1}^{T} \rho^{0, j} (\tau, \hat \tau ) \diff\hat\tau = \int_{-1}^{T} \mathbf{1}_{[-1, 0)} (\tau) \mathbf{1}_{[0,\hat t_j)} (\hat \tau ) \diff\hat\tau = 0 =\mathbf{1}_{[0, t_0)} (\tau) . \]
	If the statement is true for $(i, j)$, $(i+1, j)$ and $(i, j+1)$, then together with the recursive definition of $\rho^{i+1, j+1}$ in \Cref{rho_definition_3} we get
	\begin{align*}
		\int_{-1}^{T} \rho^{i+1, j+1} (\tau, \hat \tau ) \diff\hat\tau = {} & \bigg(1- \frac{\hat{h}_j}{h_i}\bigg)^+ \mathbf{1}_{[0,t_{i+1})} (\tau) + \bigg(1- \frac{h_i}{\hat{h}_j}\bigg)^+ \mathbf{1}_{[0,t_i)} (\tau) \\*
		&{} + \frac{h_i \land \hat{h}_j}{h_i \lor \hat{h}_j} \mathbf{1}_{[0,t_i)} (\tau)  + \frac{1}{h_i \lor \hat{h}_j} \underbrace{ \int_{-1}^T \mathbf{1}_{[t_i , t_{i+1})} (\tau) \mathbf{1}_{[\hat{t}_j , \hat{t}_{j+1})} (\hat\tau) \diff\hat\tau }_{{}={}\hat h_j \mathbf{1}_{[t_i, t_{i+1})}(\tau)} \\*
		{}= {}& \mathbf{1}_{[0, t_{i+1})} (\tau).
	\end{align*}
	This concludes the induction. The second statement can be treated analogously.
\end{proof}

\begin{remark} \label{mass-preservation-remark}
\Cref{mass-preservation} shows that the mass stays constant in some sense. More precisely, it follows from \Cref{mass-preservation}, that
\[ \int_0^{T} \int_{-1}^{T} \rho^{i, j} (\tau , \hat \tau ) \diff\hat\tau \diff\tau = \int_0^{T} \mathbf{1}_{[0,t_i)} (\tau) \diff\tau = t_i \]
and
\[ \int_{-1}^{T} \int_0^{T} \rho^{i, j} (\tau , \hat \tau ) \diff\hat\tau \diff\tau = \int_0^{T} \mathbf{1}_{[0,\hat t_j)} (\hat\tau) \diff\hat\tau = \hat t_j. \]
Furthermore one can show that
\[ \int_{-1}^{T} \rho^{i, j} (\tau, \hat \tau ) \diff\hat\tau \le \hat t_j \text{ for all $\tau < 0$} \quad \text{and} \quad \int_{-1}^{T} \rho^{i, j} (\tau , \hat \tau ) \diff\tau \le t_i \text{ for all $\hat \tau < 0$}. \]
Therefore
\[ \int_0^{T} \int_0^{T} \rho^{i, j} (\tau , \hat \tau ) \diff\hat\tau \diff\tau \le t_i \land \hat t_j \le t_i \lor \hat t_j \le \int_{-1}^{T} \int_{-1}^{T} \rho^{i, j} (\tau , \hat \tau ) \diff\hat\tau \diff\tau \le t_i + \hat t_j . \]
\end{remark}

The next lemma shows how the density $\rho^{i,j}$ can be computed directly without iteration over $i$ and $j$.

\begin{lemma}
	For every $i \in \set{0,\ldots,N}$, $j \in \setsmall{0,\ldots,\hat{N}}$, $k \in \set{0,\ldots,N-1}$ and every $l \in \setsmall{0,\ldots,\hat N-1}$,
	\begin{align*}
		\rho^{i,j} (t_k, \hat t_l) = {}  \frac{1}{h_k \lor \hat{h}_l} \bigg( &(h_k - \hat{h}_{l+1})^+ \rho^{i,j}(t_k,\hat t_{l+1}) + (\hat{h}_l - h_{k+1})^+ \rho^{i,j} (t_{k+1},\hat t_l) \\*
		&+ (h_{k+1} \land \hat{h}_{l+1}) \rho^{i,j} (t_{k+1},\hat t_{l+1})
		+ \delta_{ (i, k+1), (j, l+1) }
		\bigg).
		\label{rho_direkt}
	\end{align*}
\end{lemma}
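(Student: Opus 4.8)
I would prove the asserted formula by induction on $i+j$, treating all pairs with $i=0$ or $j=0$ as base cases. The only input about $\rho$ I need is the ``nodal'' version of \Cref{rho_definition_3}: evaluating that identity at a grid point $(t_k,\hat t_l)$ with $k\in\set{0,\ldots,N-1}$ and $l\in\setsmall{0,\ldots,\hat N-1}$, and using that the nodes are ordered and the boxes half-open (so $\mathbf{1}_{[t_i,t_{i+1})}(t_k)=\delta_{i,k}$ and $\mathbf{1}_{[\hat t_j,\hat t_{j+1})}(\hat t_l)=\delta_{j,l}$, and these products vanish also when $t_k=T$ or $\hat t_l=T$, consistently with \Cref{rho-support}), one gets
\begin{align*}
    \rho^{i+1,j+1}(t_k,\hat t_l) = {} & \bigg(1- \frac{\hat h_j}{h_i}\bigg)^+ \rho^{i+1,j}(t_k,\hat t_l) + \bigg(1- \frac{h_i}{\hat h_j}\bigg)^+ \rho^{i,j+1}(t_k,\hat t_l) \\
    & {} + \frac{h_i\land\hat h_j}{h_i\lor\hat h_j}\,\rho^{i,j}(t_k,\hat t_l) + \frac{\delta_{i,k}\delta_{j,l}}{h_i\lor\hat h_j} ,
\end{align*}
and since $\delta_{i,k}=\delta_{i+1,k+1}$, this remains valid with $t_{k+1}$ or $\hat t_{l+1}$ in place of $t_k$ or $\hat t_l$.

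For the base case, if $i=0$ or $j=0$ then $\rho^{i,j}(t_k,\hat t_l)=0$ for all $k,l\ge 0$ by \Cref{rho_definition_1} and \Cref{rho_definition_2} (because $t_k,\hat t_l\ge 0$), and likewise $\rho^{i,j}(t_k,\hat t_{l+1})=\rho^{i,j}(t_{k+1},\hat t_l)=\rho^{i,j}(t_{k+1},\hat t_{l+1})=0$; moreover the Kronecker term vanishes since $k+1,l+1\ge 1$ forces $i\neq k+1$ or $j\neq l+1$. Hence both sides of the stated identity are $0$. For the induction step one derives the formula for $\rho^{i+1,j+1}$ from the formula for $\rho^{i+1,j}$, $\rho^{i,j+1}$, $\rho^{i,j}$ (all of smaller index sum) by the following mechanism: apply the nodal recursion to $\rho^{i+1,j+1}(t_k,\hat t_l)$; apply the induction hypothesis to each of the three density values $\rho^{i+1,j}(t_k,\hat t_l)$, $\rho^{i,j+1}(t_k,\hat t_l)$, $\rho^{i,j}(t_k,\hat t_l)$; regroup all resulting terms according to the three evaluation points $(t_k,\hat t_{l+1})$, $(t_{k+1},\hat t_l)$, $(t_{k+1},\hat t_{l+1})$; and at each of those three points apply the nodal recursion once more, in the reverse direction, to fold the three density values back into $\rho^{i+1,j+1}$ evaluated there, at the cost of a Kronecker term. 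Because the coefficients $A_1=(h_k-\hat h_{l+1})^+$, $A_2=(\hat h_l-h_{k+1})^+$, $A_3=h_{k+1}\land\hat h_{l+1}$ of the induction hypothesis do not depend on the superscript, this regrouping goes through verbatim and reproduces the three backward coefficients of the target identity for $\rho^{i+1,j+1}$.

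What remains — and is the only point needing care — is the arithmetic of the Kronecker terms: the term $\delta_{i,k}\delta_{j,l}/(h_i\lor\hat h_j)$ from the first use of the nodal recursion, the three terms produced by the induction hypothesis, and the three corrections produced by folding back must together equal $\delta_{i+1,k+1}\delta_{j+1,l+1}/(h_k\lor\hat h_l)$. This follows from a case distinction according to whether $i$ equals $k$, equals $k+1$, or neither, and independently the same for $j$ versus $l$ and $l+1$: in the generic cases every relevant $\delta$ is $0$; in the case $i=k$, $j=l$ one uses $h_i\lor\hat h_j=h_k\lor\hat h_l$; and the three ``diagonal'' cases ($i=k,j=l+1$; $i=k+1,j=l$; $i=k+1,j=l+1$) reduce, after $\delta_{i+1,k+1}=\delta_{i,k}$ and $\delta_{j+1,l+1}=\delta_{j,l}$, to the single elementary identity $\tfrac{(a-b)^+}{a\lor b}=(1-b/a)^+$ for $a,b>0$, applied with $(a,b)=(h_k,\hat h_{l+1})$ and with $(a,b)=(\hat h_l,h_{k+1})$ — exactly the relation matching a backward coefficient of the claimed formula, evaluated at a shifted node, against the forward coefficient $(1-\hat h_j/h_i)^+$ or $(1-h_i/\hat h_j)^+$ used in folding back. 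I expect this bookkeeping of the seven $\delta$-terms and their denominators across the three shifted points to be the main source of possible error, even though every individual step is elementary.
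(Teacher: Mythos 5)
Your plan is correct and matches the paper's proof: both fix $(k,l)$, induct over $(i,j)$ with the $i=0$ or $j=0$ cases as a trivial base, apply the pointwise form of the recursion \Cref{rho_definition_3} to $\rho^{i+1,j+1}(t_k,\hat t_l)$, substitute the induction hypothesis into the three smaller-index terms, and regroup into the target formula using exactly the Kronecker-delta identities you flag (of which $\frac{(a-b)^+}{a\lor b}=(1-b/a)^+$ is the nontrivial one; the paper spells out four such identities). The one bit of bookkeeping you leave unverified — that the seven $\delta$-contributions combine into $\delta_{(i+1,j+1),(k+1,l+1)}/(h_k\lor\hat h_l)$ — is precisely what the paper carries out explicitly, and your case analysis correctly locates where each identity is needed.
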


\begin{proof}
In this proof we use the short notation $\rho^{i,j}_{k,l} \coloneqq \rho^{i,j}(t_k, \hat t_l)$ and the Kronecker delta.
Let $k \in \set{0,\ldots,N-1}$ and $l \in \setsmall{0,\ldots,\hat N-1}$.
We prove the statement by induction over $(i,j)$. For $i=0$ or $j = 0$ this is true since
\[ \rho^{i,0} (\tau, \hat\tau) = \rho^{0,j} (\tau, \hat\tau) = 0 \]
for all $\tau, \hat\tau \ge 0$ by definition (see \Cref{rho_definition_1} and \Cref{rho_definition_2}) and
\[ \mathbf{1}_{[t_{i} , t_{i+1}) \times [\hat{t}_{0} , \hat{t}_{1})} (t_{k+1}, \hat t_{l+1}) = \mathbf{1}_{[t_{0} , t_{1}) \times [\hat{t}_{j} , \hat{t}_{j+1})} (t_{k+1}, \hat t_{l+1}) = 0 . \]
Now let us assume that the statement is true for $(i+1,j)$, $(i,j+1)$ and $(i,j)$. Using the definition of $\rho^{i+1, j+1}$ in \Cref{rho_definition_3} and our induction hypothesis we get
\begin{align*}
	\rho^{i+1,j+1}_{k,l}
	= {}&\bigg(1- \frac{\hat{h}_j}{h_i}\bigg)^+ \rho^{i+1,j}_{k,l} + \bigg(1- \frac{h_i}{\hat{h}_j}\bigg)^+ \rho^{i,j+1}_{k,l}
	+ \frac{h_i \land \hat{h}_j}{h_i \lor \hat{h}_j} \rho^{i,j}_{k,l} + \frac{1}{h_i \lor \hat{h}_j} \delta_{(i,j), (k,l)} \\
	{}={}& \bigg(1- \frac{\hat{h}_j}{h_i}\bigg)^+ \frac{1}{h_k \lor \hat h_l} \Big( (h_k - \hat{h}_{l+1})^+ \rho^{i+1,j}_{k,l+1} + (\hat{h}_l - h_{k+1})^+ \rho^{i+1,j}_{k+1,l} \\*& + (h_{k+1} \land \hat{h}_{l+1}) \rho^{i+1,j}_{k+1,l+1} + \delta_{(i+1,j),(k+1,l+1)} \Big) \\&
    + \bigg(1- \frac{h_i}{\hat{h}_j}\bigg)^+ \frac{1}{h_k \lor \hat h_l} \Big( (h_k - \hat{h}_{l+1})^+ \rho^{i,j+1}_{k,l+1} + (\hat{h}_l - h_{k+1})^+ \rho^{i,j+1}_{k+1,l} \\*& + (h_{k+1} \land \hat{h}_{l+1}) \rho^{i,j+1}_{k+1,l+1} + \delta_{(i,j+1),(k+1,l+1)} \Big) \\&
    + \frac{h_i \land \hat{h}_j}{h_i \lor \hat{h}_j} \frac{1}{h_k \lor \hat h_l} \Big( (h_k - \hat{h}_{l+1})^+ \rho^{i,j}_{k,l+1} + (\hat{h}_l - h_{k+1})^+ \rho^{i,j}_{k+1,l} \\*& + (h_{k+1} \land \hat{h}_{l+1}) \rho^{i,j}_{k+1,l+1} + \delta_{(i,j),(k+1,l+1)} \Big) 
    + \frac{1}{h_i \lor \hat{h}_j} \delta_{(i,j), (k,l)}.
\end{align*}
By using the identities
\begin{alignat*}{2}
	\bigg(1- \frac{\hat{h}_j}{h_i}\bigg)^+ \delta_{(i+1,j), (k+1, l+1)} &= \frac{(h_i-\hat h_j)^+}{h_i \lor \hat h_j} \delta_{(i+1,j), (k+1, l+1)} &&= \frac{(h_k - \hat h_{l+1})^+}{h_i \lor \hat h_j} \delta_{(i,j),(k,l+1)} , \\
	\bigg(1- \frac{h_i}{\hat{h}_j}\bigg)^+ \delta_{(i,j+1),(k+1,l+1)} &= \frac{(\hat h_j - h_i)^+}{h_i \lor \hat h_j} \delta_{(i,j+1),(k+1,l+1)} &&= \frac{(\hat h_l - h_{k+1})^+}{h_i \lor \hat h_j} \delta_{(i,j),(k+1,l)} , \\
	\frac{h_i \land \hat{h}_j}{h_i \lor \hat{h}_j} \delta_{(i,j),(k+1,l+1)} &= \frac{h_{k+1} \land \hat h_{l+1}}{h_i \lor \hat h_j} \delta_{(i,j),(k+1,l+1)} &&\quad \text{and} \\
	\frac{1}{h_i \lor \hat{h}_j} \delta_{(i,j), (k,l)} &= \frac{1}{h_k \lor \hat{h}_l} \delta_{(i+1,j+1), (k+1,l+1)}
\end{alignat*}
and rearranging the terms we get
\begin{align*}
	&\rho^{i+1,j+1}_{k,l} \\ = {} & \frac{(h_k-\hat h_{l+1})^+}{h_k \lor \hat h_l} \bigg( \bigg(1-\frac{\hat h_j}{h_i} \bigg)^+ \rho^{i+1,j}_{k,l+1} + \bigg(1-\frac{h_i}{\hat h_j}\bigg)^+ \rho^{i,j+1}_{k,l+1} + \frac{h_i \land \hat{h}_j}{h_i \lor \hat{h}_j} \rho^{i,j}_{k,l+1} + \frac{ \delta_{(i,j),(k,l+1)} }{h_i \lor \hat h_j} \bigg) \\*
	& + \frac{(\hat h_l - h_{k+1})^+}{h_k \lor \hat h_l} \bigg( \bigg(1-\frac{\hat h_j}{h_i} \bigg)^+ \rho^{i+1,j}_{k+1,l} + \bigg(1-\frac{h_i}{\hat h_j}\bigg)^+ \rho^{i,j+1}_{k+1,l} + \frac{h_i \land \hat{h}_j}{h_i \lor \hat{h}_j} \rho^{i,j}_{k+1,l} + \frac{ \delta_{(i,j),(k+1,l)} }{h_i \lor \hat h_j} \bigg) \\*
	& + \frac{h_{k+1} \land \hat{h}_{l+1}}{h_k \lor \hat{h}_l} \bigg( \bigg(1-\frac{\hat h_j}{h_i} \bigg)^+ \rho^{i+1,j}_{k+1,l+1} + \bigg(1-\frac{h_i}{\hat h_j}\bigg)^+ \rho^{i,j+1}_{k+1,l+1} + \frac{h_i \land \hat{h}_j}{h_i \lor \hat{h}_j} \rho^{i,j}_{k+1,l+1} \\*& + \frac{ \delta_{(i,j),(k+1,l+1)} }{h_i \lor \hat h_j} \bigg)
	+ \frac{1}{h_k \lor \hat{h}_l} \delta_{(i+1,j+1), (k+1,l+1)} \\
	{}={}& \frac{1}{h_k \lor \hat h_l} \Big( (h_k - \hat{h}_{l+1})^+ \rho^{i+1,j+1}_{k,l+1} + (\hat{h}_l - h_{k+1})^+ \rho^{i+1,j+1}_{k+1,l} \\*
 &+ (h_{k+1} \land \hat{h}_{l+1}) \rho^{i+1,j+1}_{k+1,l+1} + \delta_{(i+1,j+1),(k+1,l+1)} \Big) .
\end{align*}
This concludes the induction.
\end{proof}

We only need one more technical lemma before we can state the main estimate for $\rho^{i, j}$.

\begin{lemma}\label{abc-lemma}
	Let $a, b, c > 0$. Then \[ a+b-\frac{2ab}{c} \le \sqrt{(a-b)^2+ac} . \]
\end{lemma}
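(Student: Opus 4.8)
The plan is to clear trivial cases and then reduce to an elementary polynomial inequality. Since the right-hand side $\sqrt{(a-b)^2+ac}$ is nonnegative, if $a+b-\frac{2ab}{c}\le 0$ there is nothing to prove. So I would assume $a+b-\frac{2ab}{c}>0$, equivalently $c(a+b)>2ab$; then both sides are nonnegative and it suffices to compare squares.

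Write $a+b-\frac{2ab}{c}=\frac{c(a+b)-2ab}{c}$, square, use $(a+b)^2=(a-b)^2+4ab$, multiply through by $c^2$, and divide by $a>0$. A short computation turns $\bigl(a+b-\frac{2ab}{c}\bigr)^2\le(a-b)^2+ac$ into
\[ 4b\bigl(c^2-c(a+b)+ab\bigr)\le c^3 , \]
and since $c^2-c(a+b)+ab=(c-a)(c-b)$, the whole lemma reduces to proving
\[ 4b(c-a)(c-b)\le c^3 \]
under the standing assumption $c(a+b)>2ab$.

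To finish I would split on the position of $c$ relative to $a$ and $b$. If $\min(a,b)\le c\le\max(a,b)$, then $(c-a)(c-b)\le 0<c^3$. If $c<\min(a,b)$, then $ca<ba$ and $cb<ab$, hence $c(a+b)<2ab$, contradicting the assumption, so this case does not occur. In the remaining case $c>\max(a,b)$ one has $0<c-a<c$ and $c-b>0$, so
\[ 4b(c-a)(c-b)<4bc(c-b)=c^3-c(c-2b)^2\le c^3 , \]
using the one-line identity $4bc(c-b)=c^3-c(c-2b)^2$.

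I do not expect a genuine obstacle: the reduction to $4b(c-a)(c-b)\le c^3$ is bookkeeping, and the two facts one must spot are mild — that the hypothesis $c(a+b)>2ab$ is exactly what rules out the single configuration $c<\min(a,b)$ in which $4b(c-a)(c-b)\le c^3$ actually fails (e.g.\ $a=b=10$, $c=1$), and the identity $4bc(c-b)=c^3-c(c-2b)^2$. If one prefers to avoid the vacuous-case argument, one can instead note at the outset that $a+b-\frac{2ab}{c}=(a-b)+\frac{2b(c-a)}{c}\le a-b\le|a-b|$ whenever $c\le a$ (and symmetrically when $c\le b$), which handles $c\le\max(a,b)$ directly and leaves only $c>\max(a,b)$.
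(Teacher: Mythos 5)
Your proof is correct, and it takes a genuinely different route from the paper. The paper splits on $4b\lessgtr c$: for $4b\le c$ it uses $a+b-\frac{2ab}{c}\le a+b=\sqrt{(a-b)^2+4ab}\le\sqrt{(a-b)^2+ac}$, and for $4b>c$ it fixes $b,c$, defines $h(a)=\sqrt{(a-b)^2+ac}-a-b+\frac{2ab}{c}$, and shows $h(0)=0$, $h'(0)=\bigl(\sqrt{c/2b}-\sqrt{2b/c}\bigr)^2\ge0$, and $h''(a)=\frac{c(4b-c)}{4((a-b)^2+ac)^{3/2}}\ge0$, so $h\ge0$ by convexity. You instead dispose of the trivial sign case, square, and reduce to the clean polynomial inequality $4b(c-a)(c-b)\le c^3$, which you settle by locating $c$ relative to $a$ and $b$; the hypothesis $c(a+b)>2ab$ is used precisely to exclude the one configuration $c<\min(a,b)$ where that polynomial inequality can fail. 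Your argument is entirely algebraic and avoids differentiation, and the reduction makes the structure of the inequality more visible (the factorization $c^2-c(a+b)+ab=(c-a)(c-b)$ and the identity $4bc(c-b)=c^3-c(c-2b)^2$ carry the content). The paper's calculus argument requires less algebraic manipulation up front but is arguably less transparent about why the bound is tight and where the constraint on $c$ enters. Both are complete; your closing observation that $a+b-\frac{2ab}{c}=(a-b)+\frac{2b(c-a)}{c}\le|a-b|$ for $c\le\max(a,b)$ gives perhaps the shortest self-contained version.
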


\begin{proof}
If $4b \le c$, then we have $a+b-\frac{2ab}{c} \le a + b = \sqrt{(a-b)^2+4ab} \le \sqrt{(a-b)^2+ac}$.

If $4b > c$, then we consider the function $h \colon [0, \infty) \to \R , a \mapsto \sqrt{(a-b)^2+ac} - a-b+\frac{2ab}{c}$ and get
\begin{align*}
h'(a) &= \frac{2(a-b) + c}{2\sqrt{(a-b)^2+ac}} + \frac{2b}{c} - 1 \quad \text{and} \\
h''(a) &= \frac{c(4b-c)}{4\klammern{(a-b)^2+ac}^{\frac{3}{2}}}
\end{align*}
for $a \ge 0$. Since $h(0) = 0$, $h'(0) = \frac{c}{2b} + \frac{2b}{c} - 2 = \klammern{\sqrt{\frac{c}{2b}} - \sqrt{\frac{2b}{c}}}^2\ge 0$ and $h''(a) \ge 0$ for all $a \ge 0$, we get $h(a) \ge 0$ for all $a \ge 0$, which proves the statement.
\end{proof}

The following lemma shows the concentration of mass on the diagonal.

\begin{lemma}\label{mass-concentration}
Let $t  \in \R$. For every $i \in \set{0, \ldots, N}$ and $j \in \setsmall{0, \ldots, \hat N}$ we  have
\begin{align*}
    & \int_{-1}^t \int_t^T \rho^{i,j} (\tau, \hat\tau) \diff\hat\tau \diff\tau + \int_t^T \int_{-1}^t \rho^{i,j} (\tau, \hat\tau) \diff\hat\tau \diff\tau \\*
    {}\le{}& \sqrt{(t_i - \hat t_j)^2 + |\pi_\discrete| ( t_i \land (t_i - t)^+) + |\pi_{\hat\discrete}| (\hat t_j \land (\hat t_j - t)^+)} .
\end{align*}
\end{lemma}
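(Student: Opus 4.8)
The plan is to prove the estimate by induction over $(i,j)$, exactly as in the preceding lemmas on $\rho^{i,j}$, since the left-hand side is linear in $\rho^{i,j}$ and the recursion \Cref{rho_definition_3} expresses $\rho^{i+1,j+1}$ as a convex-type combination of $\rho^{i+1,j}$, $\rho^{i,j+1}$, $\rho^{i,j}$ plus the indicator term. For the base cases $j=0$ (and symmetrically $i=0$) one uses $\rho^{i,0} = \mathbf 1_{[0,t_i)\times[-1,0)}$: then $\int_t^T\int_{-1}^t \rho^{i,0} = 0$ because the $\hat\tau$-support lies in $[-1,0)\subseteq[-1,t]$ when $t\ge 0$ (and the whole thing vanishes when $t<0$), while $\int_{-1}^t\int_t^T \rho^{i,0}$ is $\int_{[0,t_i)\cap[-1,t]}1\,d\tau \le t_i\wedge(t_i-t)^+$ after a short case check on the sign of $t$; since $\hat t_0 = 0$ the right-hand side is $\sqrt{t_i^2 + |\pi_\discrete|(t_i\wedge(t_i-t)^+)} \ge t_i \ge t_i\wedge(t_i-t)^+$, so the base case holds.

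For the induction step, write $F^{i,j}(t)$ for the left-hand side as a function of $t$ (with $i,j$ fixed). By linearity and the recursion,
\[
F^{i+1,j+1}(t) = \Bigl(1-\tfrac{\hat h_j}{h_i}\Bigr)^+ F^{i+1,j}(t) + \Bigl(1-\tfrac{h_i}{\hat h_j}\Bigr)^+ F^{i,j+1}(t) + \tfrac{h_i\wedge\hat h_j}{h_i\vee\hat h_j} F^{i,j}(t) + \tfrac{1}{h_i\vee\hat h_j} G(t),
\]
where $G(t)$ is the contribution of $\mathbf 1_{[t_i,t_{i+1})\times[\hat t_j,\hat t_{j+1})}$, namely $\int_{-1}^t\int_t^T + \int_t^T\int_{-1}^t$ of that indicator. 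A direct computation of $G(t)$ (it is a product of one-dimensional overlap lengths) shows $G(t) \le h_i\wedge\hat h_j$ always, and more precisely $G(t)$ is supported on $t\in(t_i,\hat t_{j+1})\cup(\hat t_j,t_{i+1})$ with the bound $G(t)\le \min\{(t-t_i)^+,(t_{i+1}-t)^+,(t-\hat t_j)^+,(\hat t_{j+1}-t)^+,h_i\wedge\hat h_j\}$-type estimates; one keeps whichever of these is convenient. Now apply the induction hypothesis to each $F$-term, using $(t_i-\hat t_j)^+ = (t_{i+1}-\hat t_{j+1})^+$ etc. to align the "$(t_i-\hat t_j)^2$" terms, and one is left with a scalar inequality: a convex combination of three square roots $\sqrt{(t_i-\hat t_j)^2 + \alpha_\bullet}$ plus $\tfrac{1}{h_i\vee\hat h_j}G(t)$ must be bounded by $\sqrt{(t_{i+1}-\hat t_{j+1})^2 + \beta}$, where the $\alpha_\bullet$ and $\beta$ are explicit combinations of the mesh sizes and the truncated quantities $t_k\wedge(t_k-t)^+$.

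The main obstacle is this final scalar inequality, and this is exactly where \Cref{abc-lemma} enters. By concavity of $s\mapsto\sqrt{(t_i-\hat t_j)^2+s}$, the convex combination of the three square roots is bounded by $\sqrt{(t_i-\hat t_j)^2 + \bar\alpha}$ where $\bar\alpha$ is the corresponding convex combination of the $\alpha_\bullet$; so it suffices to show $\sqrt{(t_i-\hat t_j)^2+\bar\alpha} + \tfrac{1}{h_i\vee\hat h_j}G(t) \le \sqrt{(t_{i+1}-\hat t_{j+1})^2+\beta}$. Squaring and simplifying, and using $t_{i+1}-\hat t_{j+1} = (t_i-\hat t_j) + (h_i-\hat h_j)$, the cross term produces something of the form $a+b-\tfrac{2ab}{c}$ with $a,b$ the relevant increments of the truncated times and $c = h_i\vee\hat h_j$ (this is where the two identities $1 = (1-\hat h_j/h_i)^+ + \hat h_j/(h_i\vee\hat h_j)$ and its twin, already used in \Cref{implicit-upper-bound}, let one split $\bar\alpha$ correctly); \Cref{abc-lemma} then bounds it by $\sqrt{(a-b)^2+ac}$, which is precisely the extra room one needs, since $a-b$ accounts for the change $(h_i-\hat h_j)$ in the diagonal offset and $ac$ for the new mesh-size contribution $|\pi_\discrete\vee\pi_{\hat\discrete}|$-type term. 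The bookkeeping — checking that $t_k\wedge(t_k-t)^+$ evolves correctly across the step and that $G(t)$ is absorbed into the $ac$ slack in every position of $t$ relative to the four grid points $t_i,t_{i+1},\hat t_j,\hat t_{j+1}$ — is the routine but lengthy part; I would organize it by the cases $t\le t_i\wedge\hat t_j$, $t\ge t_{i+1}\vee\hat t_{j+1}$, and $t$ in the "interesting" middle range, handling the middle range with \Cref{abc-lemma} as above.
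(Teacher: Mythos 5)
Your broad strategy — induction over $(i,j)$, the base cases from $\rho^{i,0}$ and $\rho^{0,j}$, a case split on the position of $t$ relative to the four grid points, concavity of the square root for one range, and \Cref{abc-lemma} for another — matches the paper's plan. But there is a genuine gap in how you close the inductive step, and it is exactly at the point you yourself flag as ``the main obstacle.''

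You plan to apply the induction hypothesis to \emph{all} three $\kappa$-terms in the recursion and then bound ``a convex combination of three square roots plus $G(t)/(h_i\vee\hat h_j)$'' by a single square root via concavity, squaring, and \Cref{abc-lemma}. That does not go through cleanly, because for $t$ between the grid points the three square-root arguments do not line up, and the cross terms after squaring do not collapse to the $a+b-\tfrac{2ab}{c}$ form you need. The missing ingredient is the mass-preservation bound
\[
\kappa_{i,j}(t) \;\le\; (t_i - t)^+ + (\hat t_j - t)^+ ,
\]
which follows directly from \Cref{mass-preservation} and which you never invoke. The paper's proof uses this bound \emph{in place of} the induction hypothesis whenever $t \ge t_i\wedge\hat t_j$. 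In particular, for $t_i\vee\hat t_j \le t < t_{i+1}\wedge\hat t_{j+1}$ one has $\kappa_{i,j}(t)=0$ and $\kappa_{i+1,j}(t)\le t_{i+1}-t$, so (with $\hat h_j\le h_i$) the expression simplifies \emph{linearly} — no squaring — to
\[
(t_{i+1}-t) + (\hat t_{j+1}-t) - \tfrac{2}{h_i}(t_{i+1}-t)(\hat t_{j+1}-t),
\]
which is precisely the $a+b-\tfrac{2ab}{c}$ shape to which \Cref{abc-lemma} applies with $a=t_{i+1}-t$, $b=\hat t_{j+1}-t$, $c=h_i$. The induction hypothesis is only used (together with concavity of $\sqrt{\cdot}$, not of $s\mapsto\sqrt{(t_i-\hat t_j)^2+s}$) in the range $0\le t\le t_i\wedge\hat t_j$, and in a mixed way in the range $t_i<t<\hat t_j$. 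Without recognizing that the linear mass-preservation bound replaces the square-root bound past $t_i\wedge\hat t_j$, the ``routine bookkeeping'' you defer will not close.

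Two smaller points: your claim that the base case ``vanishes when $t<0$'' is false (for $j=0$ and $t<0$ one gets $\kappa_{i,0}(t)=t_i(t+1)\neq 0$); the paper avoids this by proving the inequality for $t\ge 0$ first and handling $t<0$ at the end via $\kappa_{i,j}(t)\le\kappa_{i,j}(0)$, which relies on \Cref{rho-support}. And your pointwise bound $G(t)\le h_i\wedge\hat h_j$ is not correct as stated; the relevant normalized bound is $G(t)/(h_i\vee\hat h_j)\le h_i\wedge\hat h_j$, but what the argument actually uses is the exact product formula for $G(t)$ in each subcase.
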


\begin{proof}
For this proof we use the notation
\[ \kappa_{i, j} (t) \coloneqq \int_{-1}^t \int_t^T \rho^{i,j} (\tau, \hat\tau) \diff\hat\tau \diff\tau + \int_t^T \int_{-1}^t \rho^{i,j} (\tau, \hat\tau) \diff\hat\tau \diff\tau .
	\numberthis \label{kappa-definition}
\]
First, let $t \ge 0$. Note that by \Cref{mass-preservation}, for every $i \in \set{0, \ldots, N}$ and $j \in \setsmall{0, \ldots, \hat N}$ we have
\[ \kappa_{i, j} (t) \le \int_t^T \mathbf{1}_{[0,\hat t_j)} (\hat\tau) \diff\hat\tau + \int_{t}^{T} \mathbf{1}_{[0,t_i)} (\tau) \diff\tau = (t_i - t)^+ + (\hat t_j - t)^+ . \numberthis \label{kappa-by-mass-preservation} \]
In particular $\kappa_{i,j} (t) = 0$ for $t \ge t_i \lor \hat t_j$.
We now prove the statement by induction over $(i,j)$. If $j = 0$, then we have
\[ \kappa_{i,0} (t) = 0 + (t_i-t)^+ \le t_i = \sqrt{(t_i-\hat t_0)^2} .  \]
If $i = 0$, we have
\[ \kappa_{0,j} (t) = (\hat t_j-t)^+ + 0 \le \hat t_j = \sqrt{(t_0 - \hat t_j)^2} . \]
Now let us assume that the statement is true for $(i,j)$, $(i+1,j)$ and $(i, j+1)$. We first treat the case that $\hat h_j \le h_i$. Then by the recursive definition of $\rho^{i+1, j+1}$ in \Cref{rho_definition_3} we have
\begin{align*}
    &\kappa_{i+1,j+1}(t) \\* = {} & \bigg(1- \frac{\hat{h}_j}{h_i}\bigg) \kappa_{i+1, j}(t) + \frac{\hat{h}_j}{h_i} \kappa_{i, j}(t) \\*
    & + \frac{1}{h_i} \Bigg( \int_{-1}^t \int_t^T \mathbf{1}_{[t_i , t_{i+1})} (\tau) \mathbf{1}_{[\hat{t}_j , \hat{t}_{j+1})} (\hat\tau) \diff\hat\tau \diff\tau + \int_t^T \int_{-1}^t \mathbf{1}_{[t_i , t_{i+1})} (\tau) \mathbf{1}_{[\hat{t}_j , \hat{t}_{j+1})} (\hat\tau) \diff\hat\tau \diff\tau \Bigg) \\
    ={}& \bigg(1- \frac{\hat{h}_j}{h_i}\bigg) \kappa_{i+1, j}(t) + \frac{\hat{h}_j}{h_i} \kappa_{i, j}(t) \\*
    & {} + \frac{1}{h_i} \Bigg( ((t-t_i)^+\land h_i)((\hat t_{j+1}-t)^+ \land \hat h_j)  + ((t_{i+1}-t)^+\land h_i)((t-\hat t_j)^+ \land \hat h_j) \Bigg) . \numberthis \label{kappa-computed}
\end{align*}
If $0 \le t \le t_i \land \hat t_j$, then we have $(t-t_i)^+ = (t-\hat t_j)^+ = 0$, so the last term in \Cref{kappa-computed} vanishes. Using also the induction hypothesis and the concavity of the square root function, we get
\begin{align*}
	\kappa_{i+1, j+1}(t) &= \bigg(1- \frac{\hat{h}_j}{h_i}\bigg) \kappa_{i+1, j}(t) + \frac{\hat{h}_j}{h_i} \kappa_{i, j}(t) \\
	&\le \bigg(1- \frac{\hat{h}_j}{h_i}\bigg) \sqrt{(t_{i+1} - \hat t_j)^2 + |\pi_\discrete| (t_{i+1} - t) + |\pi_{\hat\discrete}| (\hat t_j - t)} \\*
	&\phantom{{}={}} + \frac{\hat{h}_j}{h_i } \sqrt{(t_i - \hat t_j)^2 + |\pi_\discrete| (t_i - t) + |\pi_{\hat\discrete}| (\hat t_j - t)} \\
	&\le \bigg( \bigg(1- \frac{\hat{h}_j}{h_i}\bigg) \klammern{(t_{i+1} - \hat t_j)^2 + |\pi_\discrete| (t_{i+1} - t) + |\pi_{\hat\discrete}| (\hat t_j - t)} \\*
	&\phantom{{}={}} + \frac{\hat{h}_j}{h_i } \klammern{(t_i - \hat t_j)^2 + |\pi_\discrete| (t_i - t) + |\pi_{\hat\discrete}| (\hat t_j - t)} \bigg)^\frac{1}{2} \\
	&= \bigg( (t_{i+1} - \hat{t}_{j+1})^2 + |\pi_\discrete| (t_{i+1}-t) + |\pi_{\hat\discrete}| ( \hat{t}_{j+1} -t ) \\*
    &\phantom{{}={}}+ \bigg(1-\frac{\hat{h}_j}{h_i}\bigg) \big( 2(t_{i+1}-\hat{t}_{j+1})\hat{h}_j + \hat{h}_j^2 - |\pi_{\hat{\discrete}}| \hat{h}_{j} \big) \\*
    &\phantom{{}={}} + \frac{\hat{h}_j}{h_i} \big( 2(t_{i+1} - \hat{t}_{j+1})(\hat{h}_j - h_i) + \hat{h}_j^2 - 2\hat{h}_j h_i + h_i^2 - |\pi_\discrete| h_i - |\pi_{\hat{\discrete}}| \hat{h}_j \big) \bigg)^{\frac{1}{2}} \\
    &= \sqrt{ (t_{i+1} - \hat{t}_{j+1})^2 + |\pi_\discrete| (t_{i+1}-t) + |\pi_{\hat{\discrete}}| (\hat{t}_{j+1}-t) - \hat{h}_j ( |\pi_\discrete| - h_i ) - |\pi_{\hat{\discrete}}| \hat{h}_j - \hat{h}_j^2 } \\
    &\le \sqrt{ (t_{i+1} - \hat{t}_{j+1})^2 + |\pi_\discrete| (t_{i+1}-t) + |\pi_{\hat{\discrete}}| (\hat{t}_{j+1}-t) } .
\end{align*}
If $t_i < t < \hat t_j$, then we can use \Cref{kappa-by-mass-preservation}, \Cref{kappa-computed}, the induction hypothesis and the preceding calculation to get
\begin{align*}
	\kappa_{i+1,j+1}(t) &= \bigg(1- \frac{\hat{h}_j}{h_i}\bigg) \kappa_{i+1,j}(t) + \frac{\hat{h}_j}{h_i} \kappa_{i,j}(t) + \frac{1}{h_i} \klammern{(t-t_i) \land h_i} \hat h_j \Newline
	&\le \bigg(1- \frac{\hat{h}_j}{h_i}\bigg) \kappa_{i+1, j}(t) + \frac{\hat{h}_j}{h_i } (\hat t_j - t) + \frac{\hat h_j}{h_i} (t-t_i) \Newline
	&=  \bigg(1- \frac{\hat{h}_j}{h_i}\bigg) \kappa_{i+1, j}(t) + \frac{\hat{h}_j}{h_i } (\hat t_j -t_i) \\
	&\le \bigg(1- \frac{\hat{h}_j}{h_i}\bigg) \sqrt{(t_{i+1} - \hat t_j)^2 + |\pi_\discrete| (t_{i+1} - t) + |\pi_{\hat\discrete}| (\hat t_j - t)} \\*
	&\phantom{{}={}} + \frac{\hat{h}_j}{h_i } \sqrt{(t_i - \hat t_j)^2 + |\pi_\discrete| (t_i - t) + |\pi_{\hat\discrete}| (\hat t_j - t)} \\
	&\le \sqrt{ (t_{i+1} - \hat{t}_{j+1})^2 + |\pi_\discrete| (t_{i+1}-t) + |\pi_{\hat{\discrete}}| (\hat{t}_{j+1}-t) } .
\end{align*}
If $\hat t_j < t < t_i$, then we can use \Cref{kappa-by-mass-preservation} and \Cref{kappa-computed} to get
\begin{align*}
	\kappa_{i+1,j+1}(t) &= \bigg(1- \frac{\hat{h}_j}{h_i}\bigg) \kappa_{i+1,j}(t) + \frac{\hat{h}_j}{h_i } \kappa_{i,j}(t) + \frac{1}{h_i} h_i \klammern{(t-\hat t_j) \land \hat h_j} \\
	&\le \bigg(1- \frac{\hat{h}_j}{h_i}\bigg) (t_{i+1}-t) + \frac{\hat{h}_j}{h_i } (t_i-t) + t-\hat t_j \\
	&= t_{i+1} - \hat t_{j+1} \\
        &= \sqrt{(t_{i+1} - \hat t_{j+1})^2}
 .
\end{align*}
If $t_i \lor \hat t_j \le t < t_{i+1} \land \hat t_{j+1}$, then we can use \Cref{kappa-by-mass-preservation}, \Cref{kappa-computed} and \Cref{abc-lemma} to get
\begin{align*}
	\kappa_{i+1,j+1}(t) &= \bigg(1-\frac{\hat{h}_j}{h_i}\bigg) \kappa_{i+1, j}(t) + \frac{\hat{h}_j}{h_i} \kappa_{i, j}(t) + \frac{1}{h_i} \klammern{(t-t_i) (\hat t_{j+1}-t) + (t_{i+1}-t) (t-\hat t_j) } \\
	&\le \bigg(1- \frac{\hat{h}_j}{h_i}\bigg) (t_{i+1}-t) + \frac{1}{h_i} \klammern{(t-t_{i+1}+h_i) (\hat t_{j+1}-t) + (t_{i+1}-t) (t-\hat t_{j+1}+\hat h_j) } \\
	&= t_{i+1} - t + \hat t_{j+1} - t - \frac{2}{h_i} (t_{i+1} - t) (\hat t_{j+1}-t) \\
	&\le \sqrt{(t_{i+1}-\hat t_{j+1})^2 + h_i (t_{i+1}-t)} \\
	&\le \sqrt{ (t_{i+1} - \hat{t}_{j+1})^2 + |\pi_\discrete| (t_{i+1}-t) } .
\end{align*}
If $t_{i+1} \lor \hat t_j \le t$, then we can use \Cref{kappa-by-mass-preservation} and \Cref{kappa-computed} to get
\begin{align*}
	\kappa_{i+1,j+1}(t) &= \bigg(1-\frac{\hat{h}_j}{h_i}\bigg) \kappa_{i+1, j}(t) + \frac{\hat{h}_j}{h_i} \kappa_{i, j}(t) + \frac{1}{h_i} \klammern{ h_i (\hat t_{j+1} - t)^+ + 0 } \\
	&= (\hat t_{j+1} - t)^+ = \sqrt{\klammern{(\hat t_{j+1} - t)^+}^2} \le \sqrt{|\pi_{\hat\discrete}| (\hat t_{j+1} - t)^+} .
\end{align*}
We now have shown
\[ \kappa_{i+1,j+1} (t) \le \sqrt{ (t_{i+1} - \hat{t}_{j+1})^2 + |\pi_\discrete| (t_{i+1}-t)^+ + |\pi_{\hat{\discrete}}| (\hat{t}_{j+1}-t)^+ } \]
for every $t \ge 0$. The case $\hat h_j > h_i$ can be treated analogously. This concludes the induction.

If $t < 0$, then note that by \Cref{rho-support}, $\rho^{i,j} (\tau, \hat\tau) = 0$ for $\tau < 0$ and $\hat\tau < 0$ and therefore
\begin{align*}
    \kappa_{i, j} (t) &\le \kappa_{i, j} (0)
    \le \sqrt{(t_i - \hat t_j)^2 + |\pi_\discrete| t_i + |\pi_{\hat\discrete}| \hat t_j} .
\end{align*}
This concludes the proof for all $t \in \R$.
\end{proof}

\begin{lemma} \label{sqrt-with-bv}
Let $g \in BV(-1, T; X)$. Then for $i \in \set{0, \ldots, N}$ and $j \in \setsmall{0, \ldots, \hat N}$ we have
\[ \int_{-1}^T \int_{-1}^T \rho^{i, j} (\tau, \hat\tau) \Xnorm{g(\tau) - g(\hat\tau)} \diff\hat\tau \diff\tau \le \sqrt{(t_i-\hat t_j)^2 + |\pi_\discrete| t_i + |\pi_{\hat\discrete}| \hat t_j} \cdot \essVar(g) . \]
\end{lemma}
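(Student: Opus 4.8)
The plan is to combine the mass-concentration estimate of \Cref{mass-concentration} with a layer-cake representation of $\Xnorm{g(\tau) - g(\hat\tau)}$ through the variation measure of $g$. I would begin by observing that the left-hand side is unchanged if $g$ is replaced by any representative $\tilde g$ with $\tilde g = g$ almost everywhere: indeed $\rho^{i,j}$ is bounded and the set of pairs $(\tau,\hat\tau)$ on which the two integrands differ is Lebesgue-null in $\R^2$, hence contributes nothing. It therefore suffices to prove, for an arbitrary such representative $\tilde g \in BV(-1,T;X)$, the bound with $\essVar(g)$ replaced by the pointwise variation $\Var(\tilde g;[-1,T])$; taking the infimum over representatives at the end recovers $\essVar(g)$.

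Fix such a $\tilde g$ and write $V(s) \coloneqq \Var(\tilde g;[-1,s])$ for $s \in [-1,T]$. Then $V$ is nondecreasing with $V(-1) = 0$ and $V(T) = \Var(\tilde g;[-1,T])$, and one has the elementary estimate $\Xnorm{\tilde g(\tau) - \tilde g(\hat\tau)} \le \betrag{V(\tau) - V(\hat\tau)}$ for all $\tau,\hat\tau \in [-1,T]$. Letting $\mu_V$ be the finite positive Lebesgue--Stieltjes measure associated with $V$, so that $\betrag{V(\tau)-V(\hat\tau)} \le \mu_V\klammern{[\tau\land\hat\tau,\,\tau\lor\hat\tau]}$ and $\mu_V([-1,T]) = \Var(\tilde g;[-1,T])$, I would insert the indicator $\mathbf{1}_{[\tau\land\hat\tau,\,\tau\lor\hat\tau]}(t)$ under an integral $\diff\mu_V(t)$ and apply Tonelli's theorem (permissible since the integrand is nonnegative and all three measures are finite on $[-1,T]$):
\[ \int_{-1}^T\int_{-1}^T \rho^{i,j}(\tau,\hat\tau)\Xnorm{\tilde g(\tau)-\tilde g(\hat\tau)}\diff\hat\tau\diff\tau \le \int_{-1}^T\klammern{\int_{-1}^T\int_{-1}^T \rho^{i,j}(\tau,\hat\tau)\,\mathbf{1}_{[\tau\land\hat\tau,\,\tau\lor\hat\tau]}(t)\diff\hat\tau\diff\tau}\diff\mu_V(t). \]
For each fixed $t$ the inner double integral runs over $\set{(\tau,\hat\tau) : \tau\land\hat\tau \le t \le \tau\lor\hat\tau}$, which within $[-1,T]^2$ is $\klammern{[-1,t]\times[t,T]}\cup\klammern{[t,T]\times[-1,t]}$; since the two rectangles overlap only in the point $(t,t)$, this inner integral equals exactly $\kappa_{i,j}(t)$ as defined in \Cref{kappa-definition}.

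It remains to apply \Cref{mass-concentration}. Because $t_i \land (t_i-t)^+ \le t_i$ and $\hat t_j \land (\hat t_j - t)^+ \le \hat t_j$, that lemma gives $\kappa_{i,j}(t) \le \sqrt{(t_i-\hat t_j)^2 + |\pi_\discrete| t_i + |\pi_{\hat\discrete}|\hat t_j}$ uniformly in $t \in \R$. Pulling this $t$-independent constant out of the $\mu_V$-integral and using $\mu_V([-1,T]) = \Var(\tilde g;[-1,T])$ yields the claimed inequality for $\tilde g$, and taking the infimum over all representatives of $g$ completes the proof. The only genuinely delicate point is the identification of the inner double integral with $\kappa_{i,j}(t)$ together with the attendant bookkeeping of half-open versus closed intervals and Lebesgue-null sets in the triple Tonelli argument; once \Cref{mass-concentration} is available, no further estimation is required.
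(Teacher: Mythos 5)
Your proof is correct, and it takes a genuinely different route from the paper's while relying on the same key ingredient, \Cref{mass-concentration}. The paper fixes a representative $g_r$ and an auxiliary partition $\pi$ of $[-1,T]$, approximates $g_r$ by a step function $g_\pi$, telescopes $g_\pi(\tau)-g_\pi(\hat\tau)$ into a finite sum of indicators of rectangles $[\tilde t_k,T]\times[-1,\tilde t_k)$ and their transposes, applies \Cref{mass-concentration} term by term, and then must separately control the errors $\iint \rho^{i,j}\,\Xnorm{g_r - g_\pi}$ (which it bounds by $O(|\pi|)\Var(g_r)$ using \Cref{mass-preservation}), finally sending $|\pi|\to 0$. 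You instead disintegrate $\Xnorm{\tilde g(\tau)-\tilde g(\hat\tau)}$ directly through the Lebesgue--Stieltjes measure $\mu_V$ of the variation function, swap the order of integration by Tonelli, and observe that the inner double integral against $\mathbf{1}_{[\tau\land\hat\tau,\,\tau\lor\hat\tau]}(t)$ reproduces $\kappa_{i,j}(t)$ exactly (up to Lebesgue-null boundary sets). This is the continuous-parameter version of the paper's discrete telescoping and renders the auxiliary partition, the approximants $g_\pi$, and the error estimates \eqref{double-integral-step-2}--\eqref{double-integral-step-3} unnecessary. All the technical points you flag are sound: replacing $g$ by a representative only changes the integrand on a product of null sets, $\rho^{i,j}$ is bounded so that change is harmless, $\mu_V$ is a finite positive measure with $\mu_V([-1,T])=\Var(\tilde g;[-1,T])$, and $\betragsmall{V(\tau)-V(\hat\tau)}\le\mu_V([\tau\land\hat\tau,\tau\lor\hat\tau])$ holds even across jump points because the closed interval picks up the left- and right-limits. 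The trade-off is that your proof invokes measure-theoretic machinery (Lebesgue--Stieltjes measures, Tonelli) that the paper's more elementary partition-and-limit argument avoids; what it buys is a shorter, cleaner derivation with no $\pi\to 0$ passage.
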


\begin{proof}
Let $g_r \colon [-1, T] \to X$ be a representative of $g$ and let $\pi \colon -1 = \tilde t_0 < \tilde t_1 < \ldots < \tilde t_{\tilde N} = T$ be a partition of $[-1, T]$. We define $g_\pi\colon [-1, T] \to X$ by
\[ g_\pi \coloneqq g_r(0) + \sum_{k=1}^{n} \klammern{g_r(\tilde t_{k}) - g_r(\tilde t_{k-1}) } \mathbf{1}_{[\tilde t_k, T]} . \]
Then
\begin{align*}
    & g_\pi (\tau) - g_\pi (\hat\tau) = \sum_{k=1}^{n} \klammern{g_r(\tilde t_{k}) - g_r(\tilde t_{k-1})} \klammern{ \mathbf{1}_{[\tilde t_k,T]\times[-1,\tilde t_k)} (\tau, \hat\tau) - \mathbf{1}_{[-1,\tilde t_k)\times[\tilde t_k,T]} (\tau, \hat\tau) }
\end{align*}
for all $\tau, \hat\tau \in [-1,T]$. Therefore, by using \Cref{mass-concentration} and the notation $\kappa_{i,j}$ as defined in \Cref{kappa-definition}, we get
\begin{align*}
	&\phantom{{}={}} \int_{-1}^T \int_{-1}^T \rho^{i, j} (\tau, \hat\tau) \Xnorm{g_\pi(\tau) - g_\pi(\hat\tau)} \diff\tau \diff\hat\tau \\
	&\le \sum_{k=1}^n \Xnorm{ g_r(\tilde t_{k}) - g_r(\tilde t_{k-1}) } \kappa_{i, j} (\tilde t_k) \\
	&\le \sqrt{(t_i - \hat t_j)^2 + |\pi_\discrete| t_i + |\pi_{\hat\discrete}| \hat t_j} \cdot \sum_{k=1}^n \Xnorm{ g_r(\tilde t_{k}) - g_r(\tilde t_{k-1}) } \\
	&\le \sqrt{(t_i - \hat t_j)^2 + |\pi_\discrete| t_i + |\pi_{\hat\discrete}| \hat t_j} \cdot \Var({g_r}) . \numberthis \label{double-integral-step-1}
\end{align*}
Note also that by \Cref{mass-preservation} and \Cref{mass-preservation-remark} we have
\[ \int_{-1}^T \rho^{i, j} (\tau, \hat\tau) \diff\hat\tau \le \hat t_j \lor 1 \]
for all $\tau \in \R$. Therefore
\begin{align*}
    &\phantom{{}={}} \int_{-1}^T \int_{-1}^T \rho^{i, j} (\tau, \hat\tau) \Xnorm{g(\tau) - g_\pi(\tau)} \diff\hat\tau \diff\tau \\
    &= \int_{-1}^T \int_{-1}^T \rho^{i, j} (\tau, \hat\tau) \diff\hat\tau\Xnorm{g_r(\tau) - g_\pi(\tau)} \diff\tau \\
    &\le (\hat t_j \lor 1) \int_{-1}^{T} \Xnorm{g_r(\tau) - g_\pi(\tau)} \diff\tau \\
    &= (\hat t_j \lor 1) \sum_{k=1}^n \int_{\tilde t_{k-1}}^{\tilde t_{k}} \Xnorm{g_r(\tau)-g_\pi(\tau)} \diff\tau \\
    &= (\hat t_j \lor 1) \sum_{k=1}^n \int_{\tilde t_{k-1}}^{\tilde t_{k}} \Xnorm{g_r(\tau)-g_r(\tilde t_k)} \diff\tau \\
    &\le (\hat t_j \lor 1) \sum_{k=1}^n \int_{\tilde t_{k-1}}^{\tilde t_{k}} \Var({g_r|_{[\tilde t_{k-1}, \tilde t_k]}}) \diff\tau \\
    &\le |\pi| (\hat t_j \lor 1) \sum_{k=1}^n \Var({g_r|_{[\tilde t_{k-1}, \tilde t_k]}}) \\
    &= |\pi| (\hat t_j \lor 1) \Var({g_r}). \numberthis \label{double-integral-step-2}
\end{align*}
and analogously
\[ \int_{-1}^T \int_{-1}^T \rho^{i, j} (\tau, \hat\tau) \Xnorm{g(\hat\tau) - g_\pi(\hat\tau)} \diff\hat\tau \diff\tau \le |\pi| (t_i \lor 1) \Var({g_r}) \numberthis \label{double-integral-step-3} . \]
Combining \Cref{double-integral-step-1}, \Cref{double-integral-step-2} and \Cref{double-integral-step-3} we get
\begin{align*}
	&\phantom{{}={}}\int_{-1}^T \int_{-1}^T \rho^{i, j} (\tau, \hat\tau) \Xnorm{g(\tau) - g(\hat\tau)} \diff\hat\tau \diff\tau \\
	&\le \int_{-1}^T \int_{-1}^T \rho^{i, j} (\tau, \hat\tau) \klammern{ \Xnorm{g(\tau) - g_\pi(\tau)} + \Xnorm{g_\pi(\tau) - g_\pi(\hat\tau)} + \Xnorm{g_\pi(\hat\tau) - g(\hat\tau)} } \diff\hat\tau \diff\tau \\
	&\le \sqrt{(t_i - \hat t_j)^2 + |\pi_\discrete| t_i + |\pi_{\hat\discrete}| \hat t_j} \cdot \Var({g_r}) + (t_i \lor 1) |\pi| \Var({g_r}) + (\hat t_j \lor 1) |\pi| \Var({g_r}) .
\end{align*}
Taking the infimum over all partitions $\pi$ of $[-1, T]$ and over all representatives $g_r$ of $g$ completes the proof.
\end{proof}

\subsection{An explicit upper bound} \label{section-2-3-an-explicit-upper-bound}

We are now able to state an upper bound for $\xnorm{u_\discrete (t_i) - u_{\hat\discrete} (\hat t_j)}$, which only depends on the mesh sizes $|\pi_\discrete|$, $|\pi_{\hat\discrete}|$, the given data of the discretizations $u_\discrete^0$, $u_{\hat\discrete}^0$, $f_\discrete$ and $f_{\hat\discrete}$ as well as some arbitrary $(u, v) \in A$ and $g \in BV(0, T; X)$.

\begin{theorem}\label{main-result}
Let $A \subseteq X \times X$ be accretive of type $\omega \in \R$. Let $\discrete, \hat\discrete$ be discretizations, let $u_\discrete \in \C$ be a solution of the implicit Euler scheme \Cref{Euler-scheme} and let $u_{\hat\discrete}$ be a solution of $(E_{\hat\discrete})$.
If $(|\pi_\discrete| \lor |\pi_{\hat\discrete}|) \omega < 1$, then for all $(u, v) \in A$, $g \in BV(0, T; X)$ and all $i \in \set{0, \ldots, N}$ and $j \in \setsmall{0, \ldots, \hat N}$,
\begin{align*}
	\xnorm{u_\discrete (t_i) - u_{\hat\discrete} (\hat t_j)} \le {} &
	 \mathopen{\exp} \left({\varphi((|\pi_\discrete| \lor |\pi_{\hat\discrete}|) \omega) (t_i+\hat t_j) \omega^+}\right) \Bigg( \xnorm{u_\discrete^0 - u} + \xnorm{u_{\hat\discrete}^0 - u} \\*
        & {} + \int_0^{t_i} \Xnorm{f_\discrete (\tau) - g(\tau)} \diff\tau + \int_0^{\hat t_j} \Xnorm{f_{\hat\discrete} (\hat\tau) - g(\hat\tau)} \diff\hat\tau \\*
        & {} + \sqrt{(t_i-\hat t_j)^2 + |\pi_\discrete| t_i + |\pi_{\hat\discrete}| \hat t_j} \cdot \left(\essVar(g) + \Xnorm{g(0+) - v} \right) \Bigg) .
\end{align*}
\end{theorem}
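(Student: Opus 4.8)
The plan is to combine \Cref{implicit-upper-bound} with \Cref{sqrt-with-bv}; the only real work is to transport the given $g \in BV(0, T; X)$ to a function on $[-1, T]$ that agrees with $v$ on the negative axis, which is the form in which \Cref{implicit-upper-bound} and \Cref{sqrt-with-bv} are stated.

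First I would fix $(u, v) \in A$ and $g \in BV(0, T; X)$ and pass to the right-continuous representative $\bar g$ of $g$ on $[0, T]$, which satisfies $\Var(\bar g) = \essVar(g)$ and $\bar g(0) = g(0+)$. Then define $\tilde g \colon [-1, T] \to X$ by $\tilde g(\tau) \coloneqq v$ for $\tau \in [-1, 0)$ and $\tilde g(\tau) \coloneqq \bar g(\tau)$ for $\tau \in [0, T]$. Since $\tilde g$ is constant equal to $v$ on $[-1, 0)$ and takes the value $\bar g(0) = g(0+)$ at $0$, additivity of the pointwise variation over the subdivision $[-1,T] = [-1,0]\cup[0,T]$ gives
\[ \essVar(\tilde g) \le \Var(\tilde g) = \Var\big(\tilde g|_{[-1, 0]}\big) + \Var\big(\tilde g|_{[0, T]}\big) = \Xnorm{g(0+) - v} + \essVar(g), \]
so $\tilde g \in BV(-1, T; X)$, $\tilde g(\tau) = v$ for $\tau < 0$, and $\tilde g = g$ almost everywhere on $[0, T]$.

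Next I would apply \Cref{implicit-upper-bound} with this choice of function $\tilde g$ in place of $g$ — legitimate because $(|\pi_\discrete| \lor |\pi_{\hat\discrete}|)\omega < 1$ by hypothesis — to bound $a_{i, j} = \xnorm{u_\discrete(t_i) - u_{\hat\discrete}(\hat t_j)}$ by $\exp\big(\varphi((|\pi_\discrete| \lor |\pi_{\hat\discrete}|)\omega)(t_i + \hat t_j)\omega^+\big)$ times
\[ \xnorm{u_\discrete^0 - u} + \xnorm{u_{\hat\discrete}^0 - u} + \norm{f_\discrete - \tilde g}_{L^1(0, t_i; X)} + \norm{f_{\hat\discrete} - \tilde g}_{L^1(0, \hat t_j; X)} + \int_{-1}^T \int_{-1}^T \rho^{i, j}(\tau, \hat\tau) \Xnorm{\tilde g(\tau) - \tilde g(\hat\tau)} \diff\hat\tau \diff\tau. \]
Because $\tilde g = g$ a.e.\ on $[0, T]$, the two $L^1$-terms are exactly $\int_0^{t_i} \Xnorm{f_\discrete(\tau) - g(\tau)} \diff\tau$ and $\int_0^{\hat t_j} \Xnorm{f_{\hat\discrete}(\hat\tau) - g(\hat\tau)} \diff\hat\tau$. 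For the double integral I would invoke \Cref{sqrt-with-bv} applied to $\tilde g$, giving
\[ \int_{-1}^T \int_{-1}^T \rho^{i, j}(\tau, \hat\tau) \Xnorm{\tilde g(\tau) - \tilde g(\hat\tau)} \diff\hat\tau \diff\tau \le \sqrt{(t_i - \hat t_j)^2 + |\pi_\discrete| t_i + |\pi_{\hat\discrete}| \hat t_j} \cdot \essVar(\tilde g), \]
and then use $\essVar(\tilde g) \le \essVar(g) + \Xnorm{g(0+) - v}$ from the variation computation above. Substituting these three observations into the estimate of \Cref{implicit-upper-bound} yields precisely the asserted inequality.

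Since the two preceding lemmas carry the entire analytic load, there is no substantial obstacle; the only point that needs care is the construction of $\tilde g$ — in particular, choosing the right-continuous representative so that the value of $\tilde g$ at $0$ is $g(0+)$, which is what makes the extra jump across $0$ contribute exactly $\Xnorm{g(0+) - v}$ to the essential variation on $[-1, T]$.
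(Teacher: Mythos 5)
Your proof is correct and follows essentially the same route as the paper: extend $g$ to $[-1,T]$ by the constant value $v$ on $[-1,0)$, apply \Cref{implicit-upper-bound}, bound the double integral via \Cref{sqrt-with-bv}, and observe that the extension's essential variation is $\essVar(g) + \Xnorm{g(0+)-v}$. Your explicit use of the right-continuous representative to justify the variation computation is slightly more careful than the paper's terse assertion of the identity $\essVar(\tilde g) = \essVar(g) + \Xnorm{g(0+)-v}$, but the argument is the same.
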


\begin{proof}
We first apply \Cref{implicit-upper-bound} with $\tilde g \in BV(-1, T; X)$ defined by
\[
	\tilde g(\tau) = \begin{cases}
		v &\text{if } \tau < 0, \\
		g(\tau) &\text{if } \tau \ge 0,
	\end{cases}
\]
which gives us
\begin{align*}
    a_{i, j} \le {} & \mathopen{\exp} \left({\varphi((|\pi_\discrete| \lor |\pi_{\hat\discrete}|) \omega) (t_i+\hat t_j) \omega^+}\right) \Bigg( \xnorm{u_\discrete^0 - u} + \xnorm{u_{\hat\discrete}^0 - u} \\*
    & {} + \int_0^{t_i} \Xnorm{f_\discrete (\tau) - g(\tau)} \diff\tau + \int_0^{\hat t_j} \Xnorm{f_{\hat\discrete} (\hat\tau) - g(\hat\tau)} \diff\hat\tau \\*
    &+ \int_{-1}^T \int_{-1}^T \rho^{i,j} (\tau, \hat\tau) \Xnorm{\tilde g(\tau) - \tilde g(\hat \tau)} \diff\hat\tau \diff\tau \bigg) .
\end{align*}
By \Cref{sqrt-with-bv}, we have
\[ \int_{-1}^T \int_{-1}^T \rho^{i,j} (\tau, \hat\tau) \Xnorm{\tilde g(\tau) - \tilde g(\hat \tau)} \diff\hat\tau \diff\tau \le \sqrt{(t_i-\hat t_j)^2 + |\pi_\discrete| t_i + |\pi_{\hat\discrete}| \hat t_j} \cdot \essVar(\tilde g) . \]
Since $\essVar (\tilde g) = \essVar (g) + \Xnorm{g(0+) - v}$, this completes the proof.
\end{proof}

\begin{remark}[Properties of $\varphi$]
	Recall that $\varphi \colon (-\infty, 1) \to \R$ is given by
	\[ \varphi(x) = \begin{cases} \ds \frac{-\log(1-x)}{x} &\text{if } x \neq 0, \\ 1 &\text{if } x = 0. \end{cases} \]
	Having the limit
	\[ \lim_{x \to 0} \varphi (x) = 1 \]
	is quite advantageous when the mesh size tends to 0.
	Note that for $-1 \le x < 1$ we have the series representation
	\[ \varphi(x) = \sum_{k=0}^\infty \frac{x^k}{k+1} = 1 +\frac{x}{2} + \frac{x^2}{3} + \frac{x^3}{4} + \cdots . \]
	By using $\varphi(x) \le 2$ for $0 \le x \le \frac{1}{2}$, we can see, that \Cref{main-result} is a generalization of the result by Kobayashi \cite[Lemma 2.1]{Kobayashi75}.
\end{remark}

\begin{theorem}[Kobayashi 1975]
	If $(|\pi_\discrete| \lor |\pi_{\hat\discrete}|) \omega \le \frac 1 2$, then for all $(u, v) \in A$ and all $i \in \set{0, \ldots, N}$ and $j \in \setsmall{0, \ldots, \hat N}$, we have
	\begin{align*}
		\xnorm{u_\discrete (t_i) - u_{\hat\discrete} (\hat t_j)} \le {} & \mathopen{\exp}  \klammern{2(t_i+\hat t_j) \omega^+} \Big(  \xnorm{u_\discrete^0 - u} + \xnorm{u_{\hat\discrete}^0 - u} + \int_0^{T} \Xnorm{f_\discrete (\tau)} \diff\tau \\*
		&{}+ \int_0^{T} \Xnorm{f_{\hat\discrete} (\hat\tau)} \diff\hat\tau + \sqrt{(t_i-\hat t_j)^2 + |\pi_\discrete| t_i + |\pi_{\hat\discrete}| \hat t_j} \cdot \Xnorm{v} \Big) .
	\end{align*}
\end{theorem}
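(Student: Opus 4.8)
\emph{Proof proposal.} The plan is to obtain this statement as an immediate corollary of \Cref{main-result} by specialising the free function $g$ to the constant zero function. Fix $(u, v) \in A$ and apply \Cref{main-result} with $g \equiv 0$; this is legitimate since $g \equiv 0$ belongs to $BV(0, T; X)$ and, by hypothesis, $(|\pi_\discrete| \lor |\pi_{\hat\discrete}|)\omega \le \tfrac12 < 1$. For this choice one has $\essVar(g) = 0$ and $g(0+) = 0$, so the factor $\essVar(g) + \Xnorm{g(0+) - v}$ appearing in \Cref{main-result} collapses to $\Xnorm{v}$, and the integral terms become $\int_0^{t_i} \Xnorm{f_\discrete(\tau)} \diff\tau$ and $\int_0^{\hat t_j} \Xnorm{f_{\hat\discrete}(\hat\tau)} \diff\hat\tau$. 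First I would bound these from above by the full integrals $\int_0^T \Xnorm{f_\discrete(\tau)} \diff\tau$ and $\int_0^T \Xnorm{f_{\hat\discrete}(\hat\tau)} \diff\hat\tau$, using that the integrands are nonnegative and $t_i, \hat t_j \le T$.

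The remaining point is to replace the exponential prefactor $\exp\bigl(\varphi((|\pi_\discrete| \lor |\pi_{\hat\discrete}|)\omega)(t_i + \hat t_j)\omega^+\bigr)$ by $\exp\bigl(2(t_i + \hat t_j)\omega^+\bigr)$. I would split on the sign of $\omega$: if $\omega \le 0$, then $\omega^+ = 0$ and both prefactors equal $1$; if $\omega > 0$, then $(|\pi_\discrete| \lor |\pi_{\hat\discrete}|)\omega$ lies in $[0, \tfrac12]$, so the bound $\varphi(x) \le 2$ for $x \in [0, \tfrac12]$ recorded in the preceding remark (which follows from the series representation $\varphi(x) = \sum_{k \ge 0} \frac{x^k}{k+1}$, or from monotonicity of $\varphi$ together with $\varphi(\tfrac12) = 2\log 2 < 2$), combined with $t_i + \hat t_j \ge 0$, gives $\varphi((|\pi_\discrete| \lor |\pi_{\hat\discrete}|)\omega)(t_i + \hat t_j)\omega^+ \le 2(t_i + \hat t_j)\omega^+$; monotonicity of $\exp$ then finishes this estimate. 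Chaining these observations with \Cref{main-result} yields the claimed inequality.

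There is essentially no obstacle here: the statement is a straightforward weakening of \Cref{main-result}. The only things to be careful about are keeping track of the positive part $\omega^+$, so that the estimate of the prefactor stays valid for all $\omega \in \R$ and not just $\omega > 0$, and confirming that the $BV(0, T; X)$ hypothesis of \Cref{main-result} genuinely admits $g \equiv 0$, so that the variation term disappears cleanly and what remains is exactly the $\Xnorm{v}$ term.
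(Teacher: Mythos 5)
Your proposal is correct and follows essentially the same route as the paper's: specialise \Cref{main-result} to $g \equiv 0$ and invoke the bound $\varphi(x) \le 2$ for $0 \le x \le \tfrac12$ to replace the prefactor. The additional care you take with the $\omega^+$ casework and with enlarging the integration domains to $[0,T]$ are exactly the minor steps the paper leaves implicit.
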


\begin{proof}
	Apply \Cref{main-result} with $g = 0$ and use $\varphi(x) \le 2$ for $0 \le x \le \frac{1}{2}$.
\end{proof}

\begin{remark}
    Note that \Cref{main-result} only evaluates $u_\discrete$ and $u_{\hat\discrete}$ at time points in the partitions $\pi_\discrete$ and $\pi_{\hat\discrete}$. Therefore, \Cref{main-result} is also true, if the solutions to Euler schemes were defined to be piecewise constant.
    The following version of our main result however uses our definition of piecewise affine solutions of Euler schemes to give an upper bound for $\xnorm{u_\discrete (t) - u_{\hat\discrete} (\hat t)}$ for all $t, \hat t \in [0, T]$.
\end{remark}

\begin{theorem}\label{main-result-continuous}
Let $A \subseteq X \times X$ be accretive of type $\omega \in \R$. Let $\discrete, \hat\discrete$ be discretizations, let $u_\discrete \in \C$ be a solution of the implicit Euler scheme \Cref{Euler-scheme} and let $u_{\hat\discrete}$ be a solution of $(E_{\hat\discrete})$.
If $(|\pi_\discrete| \lor |\pi_{\hat\discrete}|) \omega < 1$, then for all $(u, v) \in A$, $g \in BV(0, T; X)$ and all $t, \hat t \in [0, T]$ we have
\begin{align*}
	&\xnorm{u_\discrete (t) - u_{\hat\discrete} (\hat t)} \\ 
	\le {}
    & \mathopen{\exp} \left({\varphi((|\pi_\discrete| \lor |\pi_{\hat\discrete}|) \omega) (\ceiling{t}_{\pi_\discrete}+\ceiling{\hat t}_{\pi_{\hat\discrete}}) \omega^+}\right)
    \Bigg( \xnorm{u_\discrete^0 - u} + \xnorm{u_{\hat\discrete}^0 - u} \\
    & + \int_0^{\ceiling{t}_{\pi_\discrete}} \Xnorm{f_\discrete (\tau) - g(\tau)} \diff\tau + \int_0^{\ceiling{\hat t}_{\pi_{\hat\discrete}}} \Xnorm{f_{\hat\discrete} (\hat\tau) - g(\hat\tau)} \diff\hat\tau \\
	& {} + \sqrt{(|t-\hat t| + |\pi_\discrete| + |\pi_{\hat\discrete}|)^2 + |\pi_\discrete| t + |\pi_{\hat\discrete}| \hat t} \cdot \left(\essVar(g) + \Xnorm{g(0+) - v} \right) \Bigg) .
\end{align*}
\end{theorem}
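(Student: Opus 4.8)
The plan is to reduce the statement to \Cref{main-result} by exploiting that $u_\discrete$ and $u_{\hat\discrete}$ are affine between consecutive partition points. Fix $t, \hat t \in [0, T]$ and write $\floor{t}_{\pi_\discrete} = t_{i_0}$, $\ceiling{t}_{\pi_\discrete} = t_{i_1}$ with $i_1 \in \setsmall{i_0, i_0+1}$, and likewise $\floor{\hat t}_{\pi_{\hat\discrete}} = \hat t_{j_0}$, $\ceiling{\hat t}_{\pi_{\hat\discrete}} = \hat t_{j_1}$. Since $u_\discrete$ is affine on $[t_{i_0}, t_{i_1}]$ there is $\lambda \in [0, 1]$ with $u_\discrete(t) = \lambda u_\discrete(t_{i_0}) + (1-\lambda) u_\discrete(t_{i_1})$ and $\lambda t_{i_0} + (1-\lambda) t_{i_1} = t$, and similarly $u_{\hat\discrete}(\hat t) = \mu u_{\hat\discrete}(\hat t_{j_0}) + (1-\mu) u_{\hat\discrete}(\hat t_{j_1})$ with $\mu \hat t_{j_0} + (1-\mu) \hat t_{j_1} = \hat t$. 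Expanding the difference and using that the four weights $p_{00} \coloneqq \lambda\mu$, $p_{01} \coloneqq \lambda(1-\mu)$, $p_{10} \coloneqq (1-\lambda)\mu$, $p_{11} \coloneqq (1-\lambda)(1-\mu)$ sum to $1$, the triangle inequality gives
\[ \xnorm{u_\discrete(t) - u_{\hat\discrete}(\hat t)} \le \sum_{k, l \in \setsmall{0,1}} p_{kl}\, a_{i_k, j_l} . \]

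Next I would apply \Cref{main-result} (whose hypothesis $(|\pi_\discrete| \lor |\pi_{\hat\discrete}|)\omega < 1$ is the one assumed here) to each of the four terms $a_{i_k, j_l}$, with one and the same $(u, v) \in A$ and $g \in BV(0, T; X)$. Because $t_{i_k} \le t_{i_1} = \ceiling{t}_{\pi_\discrete}$, $\hat t_{j_l} \le \hat t_{j_1} = \ceiling{\hat t}_{\pi_{\hat\discrete}}$, $\omega^+ \ge 0$ and $\varphi > 0$, each exponential prefactor that appears is at most $\exp(\varphi((|\pi_\discrete| \lor |\pi_{\hat\discrete}|)\omega)(\ceiling{t}_{\pi_\discrete} + \ceiling{\hat t}_{\pi_{\hat\discrete}}) \omega^+)$, and each norm-of-data term is at most $\int_0^{\ceiling{t}_{\pi_\discrete}} \Xnorm{f_\discrete(\tau) - g(\tau)} \diff\tau + \int_0^{\ceiling{\hat t}_{\pi_{\hat\discrete}}} \Xnorm{f_{\hat\discrete}(\hat\tau) - g(\hat\tau)} \diff\hat\tau$; both bounds are constant in $(k, l)$ and can therefore be pulled out of the convex combination $\sum_{k,l} p_{kl}(\cdots)$.

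It then remains to bound $\sum_{k,l} p_{kl} \sqrt{(t_{i_k} - \hat t_{j_l})^2 + |\pi_\discrete| t_{i_k} + |\pi_{\hat\discrete}| \hat t_{j_l}}$ by the claimed square root. By concavity of the square root (Jensen's inequality) this is at most $\big( \sum_{k,l} p_{kl} ((t_{i_k} - \hat t_{j_l})^2 + |\pi_\discrete| t_{i_k} + |\pi_{\hat\discrete}| \hat t_{j_l}) \big)^{1/2}$. Inside the root, the contributions $|\pi_\discrete| t_{i_k}$ collapse to $|\pi_\discrete| (\lambda t_{i_0} + (1-\lambda) t_{i_1}) = |\pi_\discrete| t$ and the contributions $|\pi_{\hat\discrete}| \hat t_{j_l}$ to $|\pi_{\hat\discrete}| \hat t$, while a short computation gives $\sum_{k,l} p_{kl} (t_{i_k} - \hat t_{j_l})^2 = (t - \hat t)^2 + \lambda(1-\lambda)(t_{i_1} - t_{i_0})^2 + \mu(1-\mu)(\hat t_{j_1} - \hat t_{j_0})^2 \le (t - \hat t)^2 + |\pi_\discrete|^2 + |\pi_{\hat\discrete}|^2 \le (|t - \hat t| + |\pi_\discrete| + |\pi_{\hat\discrete}|)^2$. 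This yields exactly the square-root factor in the statement, and combining with the previous paragraph finishes the proof.

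The only genuinely delicate point is this last computation: one has to notice that the interpolation weights make the $|\pi|\,t_{i_k}$-terms telescope back to $|\pi|\,t$ exactly, so that no extra mesh-size loss is incurred there, and that the leftover in the squared-distance term is merely the variance of the interpolation, controlled by $|\pi_\discrete|^2 + |\pi_{\hat\discrete}|^2$. Everything else is monotonicity bookkeeping, and the edge cases ($t$ or $\hat t$ a partition point, or equal to $T$) are automatically covered by permitting $i_1 = i_0$, $\lambda = 1$, in which case the corresponding variance term simply vanishes.
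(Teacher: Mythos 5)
Your proof is correct and follows essentially the same route as the paper: express $u_\discrete(t)$ and $u_{\hat\discrete}(\hat t)$ as convex combinations of adjacent grid values, bound the difference by a convex combination of the $a_{k,\ell}$, apply \Cref{main-result} to each, pull out the monotonically bounded exponential and data-integral factors, and finish with concavity of the square root. The only difference is cosmetic: the paper bounds each $(t_k-\hat t_\ell)^2$ term by $(|t-\hat t|+|\pi_\discrete|+|\pi_{\hat\discrete}|)^2$ before averaging, while you compute the exact average $\sum p_{kl}(t_{i_k}-\hat t_{j_l})^2=(t-\hat t)^2+\lambda(1-\lambda)(t_{i_1}-t_{i_0})^2+\mu(1-\mu)(\hat t_{j_1}-\hat t_{j_0})^2$ and then bound — both yield the same claimed estimate.
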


\begin{proof}
    Choose $i \in \setsmall{0, \ldots, N-1}$ and $j \in \setsmall{0, \ldots, \hat N-1}$, such that $t \in [t_i, t_{i+1}]$ and $\hat t \in [\hat t_j, \hat t_{j+1}]$, so there exist $\lambda, \hat\lambda \in [0, 1]$ such that $t = \lambda t_i + (1-\lambda) t_{i+1}$ and $\hat t = \hat\lambda \hat t_j + (1-\hat\lambda) \hat t_{j+1}$. Since $u_\discrete$ and $u_{\hat\discrete}$ are piecewise affine, we get
\begin{align*}
	\xnorm{u_\discrete (t) - u_{\hat\discrete} (\hat t)} &= \xnorm{\lambda u_\discrete (t_i) + (1-\lambda) u_\discrete (t_{i+1}) - \hat\lambda u_{\hat\discrete} (\hat t_j) - (1-\hat\lambda) u_{\hat\discrete} (\hat t_{j+1})} \\
	&\le \lambda \klammern{ \hat\lambda a_{i,j} + (1-\hat\lambda)a_{i,j+1} } + (1-\lambda) \klammern{ \hat\lambda a_{i+1,j} + (1-\hat\lambda) a_{i+1,j+1} }.
\end{align*}
For $k \in \set{i, i+1}$ and $\ell \in \set{j, j+1}$, we can use \Cref{main-result} to get an upper bound for $a_{k, \ell}$.
Note that $t_k \le \ceiling{t}_{\pi_\discrete}$ and $\hat t_\ell \le \ceiling{\hat t}_{\pi_{\hat\discrete}}$. By using $|t_k - \hat t_\ell| \le |t-\hat t| + |\pi_\discrete| + |\pi_{\hat\discrete}|$ and the concavity of the square root function, we get
\begin{align*}
    &\lambda \hat\lambda \sqrt{(t_i-\hat t_j)^2 + |\pi_\discrete| t_i + |\pi_{\hat\discrete}| \hat t_j} 
    + \lambda (1 - \hat\lambda) \sqrt{(t_i-\hat t_{j+1})^2 + |\pi_\discrete| t_i + |\pi_{\hat\discrete}| \hat t_{j+1}} \\*
    &+ (1 - \lambda) \hat\lambda \sqrt{(t_{i+1}-\hat t_j)^2 + |\pi_\discrete| t_{i+1} + |\pi_{\hat\discrete}| \hat t_j} \\*
    &+ (1 - \lambda) (1 - \hat\lambda) \sqrt{(t_{i+1}-\hat t_{j+1})^2 + |\pi_\discrete| t_{i+1} + |\pi_{\hat\discrete}| \hat t_{j+1}} \\
    {}\le{} & \Big( (|t-\hat t| + |\pi_\discrete| + |\pi_{\hat\discrete}|)^2 + \lambda \hat\lambda (|\pi_\discrete| t_i + |\pi_{\hat\discrete}| \hat t_j)
    + \lambda (1 - \hat\lambda) (|\pi_\discrete| t_i + |\pi_{\hat\discrete}| \hat t_{j+1}) \\*
    &+ (1 - \lambda) \hat\lambda ( |\pi_\discrete| t_{i+1} + |\pi_{\hat\discrete}| \hat t_j ) 
    + (1 - \lambda) (1 - \hat\lambda) ( |\pi_\discrete| t_{i+1} + |\pi_{\hat\discrete}| \hat t_{j+1} ) \Big)^{\frac{1}{2}} \\
    {}={} &\sqrt{(|t-\hat t| + |\pi_\discrete| + |\pi_{\hat\discrete}|)^2 + |\pi_\discrete| t + |\pi_{\hat\discrete}| \hat t} .
\end{align*}
This completes the proof.
\end{proof}

\begin{corollary}[Distance in $\C$] \label{distance-in-C}
	If $(|\pi_\discrete| \lor |\pi_{\hat\discrete}|) \omega < 1$, then for all $(u, v) \in A$ and $g \in BV(0, T; X)$ we have
	\begin{align*}
		\norm{u_{\discrete} - u_{\hat\discrete}}_{\C} \le {} & \mathopen{\exp}\klammern{\varphi((|\pi_\discrete| \lor |\pi_{\hat\discrete}|) \omega) 2 T \omega^+}
		\Big( \xnorm{u_\discrete^0 - u} + \xnorm{u_{\hat\discrete}^0 - u} \\
		& + \norm{f_\discrete-g}_{L^1(0, T; X)} + \norm{f_{\hat\discrete}-g}_{L^1(0, T; X)} \\
		& {} + \sqrt{(|\pi_\discrete| + |\pi_{\hat\discrete}|)^2 + |\pi_\discrete| T + |\pi_{\hat\discrete}| T} \cdot \left(\essVar(g) + \Xnorm{g(0+) - v} \right) \Big) .
	\end{align*}
\end{corollary}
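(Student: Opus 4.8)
The plan is to deduce the estimate directly from \Cref{main-result-continuous} by specializing to $\hat t = t$ and then bounding every $t$-dependent quantity uniformly on $[0, T]$ before passing to the supremum.

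First I would fix $(u, v) \in A$ and $g \in BV(0, T; X)$; since the hypothesis $(|\pi_\discrete| \lor |\pi_{\hat\discrete}|) \omega < 1$ is precisely the one required by \Cref{main-result-continuous}, I may apply that theorem with $\hat t = t$ for an arbitrary $t \in [0, T]$. On the left-hand side this gives $\xnorm{u_\discrete(t) - u_{\hat\discrete}(t)}$, while on the right-hand side the summand $|t - \hat t|$ vanishes, so the square-root factor reduces to $\sqrt{(|\pi_\discrete| + |\pi_{\hat\discrete}|)^2 + |\pi_\discrete| t + |\pi_{\hat\discrete}| t}$.

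Next I would replace each $t$-dependent quantity by its obvious uniform bound on $[0, T]$. Since $\ceiling{t}_{\pi_\discrete} \le T$ and $\ceiling{t}_{\pi_{\hat\discrete}} \le T$ for every $t \in [0, T]$, and since $\varphi$ is nonnegative and $\omega^+ \ge 0$, the exponential prefactor is bounded by $\exp\klammern{\varphi((|\pi_\discrete| \lor |\pi_{\hat\discrete}|)\omega)\, 2T\omega^+}$; the integrals $\int_0^{\ceiling{t}_{\pi_\discrete}} \Xnorm{f_\discrete(\tau) - g(\tau)} \diff\tau$ and $\int_0^{\ceiling{t}_{\pi_{\hat\discrete}}} \Xnorm{f_{\hat\discrete}(\hat\tau) - g(\hat\tau)} \diff\hat\tau$ are bounded by $\norm{f_\discrete - g}_{L^1(0, T; X)}$ and $\norm{f_{\hat\discrete} - g}_{L^1(0, T; X)}$ respectively; and $\sqrt{(|\pi_\discrete| + |\pi_{\hat\discrete}|)^2 + |\pi_\discrete| t + |\pi_{\hat\discrete}| t} \le \sqrt{(|\pi_\discrete| + |\pi_{\hat\discrete}|)^2 + |\pi_\discrete| T + |\pi_{\hat\discrete}| T}$ because $t \le T$. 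The resulting right-hand side no longer depends on $t$, so taking the supremum over $t \in [0, T]$ on the left gives $\norm{u_{\discrete} - u_{\hat\discrete}}_{\C}$ and yields exactly the claimed inequality.

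There is essentially no obstacle here; the only points requiring a moment's care are verifying that $\ceiling{t}_{\pi_\discrete} \le T$ holds on all of $[0, T]$ (including the endpoint $t = T$, where $\ceiling{T}_{\pi_\discrete} = T$ by the definition of the ceiling function, since $T \notin [0, T)$) and noting that the monotonicity used to bound the prefactor is legitimate because $\varphi \ge 0$ and $\omega^+ \ge 0$.
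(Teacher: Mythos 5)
Your proposal is correct and takes exactly the approach the paper uses: apply \Cref{main-result-continuous} with $\hat t = t$, then bound the exponential prefactor, the integrals, and the square-root term uniformly via $\ceiling{t}_{\pi_\discrete} \le T$ and $\ceiling{\hat t}_{\pi_{\hat\discrete}} \le T$, and pass to the supremum over $t$. The only difference is that you spell out the monotonicity checks (nonnegativity of $\varphi$, $\omega^+ \ge 0$, and the endpoint case $\ceiling{T}_{\pi_\discrete} = T$) which the paper leaves implicit.
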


\begin{proof}
	Use \Cref{main-result-continuous} and choose $t = \hat t$. Using $t \le \ceiling{t}_{\pi_\discrete} \le T$ as well as $\hat t \le \ceiling{\hat t}_{\pi_{\hat\discrete}} \le T$ yields the desired result.
\end{proof}

\section{Aplications}

In this section we show how our results can be used to establish existence and uniqueness of Euler solutions for the Cauchy problem \Cref{Cauchy-Problem} as well as showing some properties of the Euler solution.

\begin{theorem} \label{wellposedness-for-accretive-and-existing-Euler-sequence}
	Let $A \subseteq X \times X$ be quasi-accretive, $f \in L^1(0, T; X)$ and $u^0 \in \overline{\dom A}$. If there exists a sequence of discretizations $(\discrete_n)$ of the form $\discrete_n = (\pi_{\discrete_n}, f_{\discrete_n}, u^0_{\discrete_n})$ satisfying
	\begin{align*}
		&\lim_{n\to\infty} |\pi_{\discrete_n}| = 0, \\
		&\lim_{n\to\infty} \norm{f_{\discrete_n} - f}_{L^1(0, T; X)} = 0, \\
		&\lim_{n\to\infty} \xnorm{u^0_{\discrete_n} - u^0} = 0
	\end{align*}
	and if the implicit Euler scheme $(E_{\discrete_n})$ has a solution for every $n \in \N$, then there exists a unique Euler solution of the Cauchy problem \Cref{Cauchy-Problem}.
\end{theorem}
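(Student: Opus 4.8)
The plan is to obtain both existence and uniqueness from the quantitative bound in \Cref{distance-in-C}, whose right-hand side can be driven to zero by first fixing good approximating data $(u,v) \in A$ and $g \in BV(0,T;X)$ and then letting the mesh sizes tend to zero.

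For existence, I would show that $(u_{\discrete_n})_n$ is a Cauchy sequence in $\C$. Fix $\varepsilon > 0$. Since $u^0 \in \overline{\dom A}$, choose $u \in \dom A$ with $\xnorm{u^0 - u} < \varepsilon$ and some $v \in Au$; since $BV(0,T;X)$ is dense in $L^1(0,T;X)$ (it contains all step functions adapted to a partition of $[0,T]$), choose $g \in BV(0,T;X)$ with $\norm{f - g}_{L^1(0,T;X)} < \varepsilon$. For all sufficiently large $n$ one then has $\xnorm{u^0_{\discrete_n} - u} < 2\varepsilon$ and $\norm{f_{\discrete_n} - g}_{L^1(0,T;X)} < 2\varepsilon$. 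Applying \Cref{distance-in-C} to $\discrete_n$ and $\discrete_m$ — which is legitimate once $n,m$ are large enough that $(|\pi_{\discrete_n}| \lor |\pi_{\discrete_m}|)\omega < 1$ — the exponential prefactor tends to $\exp(2T\omega^+)$ because $\varphi$ is continuous at $0$ with $\varphi(0) = 1$, while $\sqrt{(|\pi_{\discrete_n}| + |\pi_{\discrete_m}|)^2 + (|\pi_{\discrete_n}| + |\pi_{\discrete_m}|)T}\cdot(\essVar(g) + \xnorm{g(0+) - v}) \to 0$ as $n,m\to\infty$, since $g$, $u$, $v$ are fixed. Hence $\limsup_{n,m\to\infty}\norm{u_{\discrete_n} - u_{\discrete_m}}_{\C} \le 8\varepsilon\exp(2T\omega^+)$, and as $\varepsilon$ was arbitrary, $(u_{\discrete_n})_n$ is Cauchy. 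Its limit $u \in \C$ is then, together with the given sequence $(\discrete_n)$ and the solutions $(u_{\discrete_n})$, an Euler solution of \Cref{Cauchy-Problem}.

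For uniqueness, let $u$ and $\tilde u$ be Euler solutions of \Cref{Cauchy-Problem}, witnessed by sequences $(\discrete_n, u_{\discrete_n})$ and $(\hat\discrete_n, u_{\hat\discrete_n})$ as in the definition of Euler solution. Running the same estimate on $\norm{u_{\discrete_n} - u_{\hat\discrete_n}}_{\C}$ with the same $u$, $v$, $g$, and using $\norm{f_{\discrete_n} - f}_{L^1(0,T;X)} \to 0$, $\norm{f_{\hat\discrete_n} - f}_{L^1(0,T;X)} \to 0$, $\xnorm{u^0_{\discrete_n} - u^0} \to 0$ and $\xnorm{u^0_{\hat\discrete_n} - u^0} \to 0$, together with $u_{\discrete_n} \to u$ and $u_{\hat\discrete_n} \to \tilde u$ in $\C$, I would obtain $\norm{u - \tilde u}_{\C} = \lim_{n\to\infty}\norm{u_{\discrete_n} - u_{\hat\discrete_n}}_{\C} \le 8\varepsilon\exp(2T\omega^+)$ for every $\varepsilon > 0$, hence $u = \tilde u$.

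The argument is essentially bookkeeping; the one point needing care is that $u$, $v$ and $g$ must be chosen independently of the partitions, so that the contributions $\essVar(g)$ and $\xnorm{g(0+) - v}$, which do not decay on their own, are annihilated by the mesh factor $\sqrt{(|\pi_{\discrete_n}| + |\pi_{\discrete_m}|)^2 + (|\pi_{\discrete_n}| + |\pi_{\discrete_m}|)T} \to 0$. A secondary, purely formal point is that \Cref{distance-in-C} is available only once $(|\pi_{\discrete_n}| \lor |\pi_{\discrete_m}|)\omega < 1$, which holds for all large indices; the finitely many remaining terms are irrelevant for a $\limsup$.
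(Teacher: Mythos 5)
Your proposal is correct and uses exactly the same strategy as the paper: apply \Cref{distance-in-C} to pairs of Euler-scheme solutions, let the mesh terms vanish, and then exploit density of $\dom A$ in $\overline{\dom A}$ and of $BV(0,T;X)$ in $L^1(0,T;X)$ to shrink the remaining data error, giving both Cauchyness (existence) and coincidence of limits (uniqueness). The only cosmetic difference is that you fix $\varepsilon$ before taking the $\limsup$ whereas the paper takes the $\limsup$ first and then observes the resulting bound $2e^{2T\omega^+}(\xnorm{u^0-\hat u}+\norm{f-g}_{L^1})$ can be made arbitrarily small; the two orderings are equivalent.
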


\begin{proof}
	Let $A$ be accretive of type $\omega \in \R$. Let the sequence $(\discrete_n)$ be as in the assumption. For every $n \in \N$ let $u_n \in \C$ be a solution of the implicit Euler scheme $(E_{\discrete_n})$, which exists by assumption.
	Since $\lim_{n\to\infty} |\pi_{\discrete_n}| = 0$, there exists an $n_0 \in \N$ such that $|\pi_{\discrete_n}| \omega < 1$ for all $n \in \N_{\ge n_0}$.
	By \Cref{distance-in-C}, for every $n, m \in \N_{\ge n_0}$, every $(\hat u, \hat v) \in A$ and every $g \in BV(0, T; X)$, we get
	\begin{align*}
		& \norm{u_n - u_m}_\C \\* 
		\le {} &
		\mathopen{\exp}{\klammern{\varphi((|\pi_{\discrete_n}| \lor |\pi_{\discrete_m}|) \omega) 2 T \omega^+}}
		\Big( \xnorm{u_{\discrete_n}^0 - \hat u} + \xnorm{u_{\discrete_m}^0 - \hat u} \\*
		& + \norm{f_{\discrete_n}-g}_{L^1(0, T; X)} + \norm{f_{\discrete_m}-g}_{L^1(0, T; X)} \\
		& {} + \sqrt{(|\pi_{\discrete_n}| + |\pi_{\discrete_m}|)^2 + |\pi_{\discrete_n}| T + |\pi_{\discrete_m}| T} \cdot \left(\essVar(g) + \Xnorm{g(0+) - \hat v} \right) \Big) .
	\end{align*}
	As a consequence,
	\[ \limsup_{n, m \to \infty} \norm{u_n - u_m}_{C([0, T]; X)} \le 2 e^{2T\omega^+} \klammern{ \xnorm{u^0-\hat u} + \norm{f - g}_{L^1(0, T; X)} } . \]
	Since $\dom A$ is dense in its closure and $BV(0, T; X)$ is dense in $L^1(0, T; X)$, the right hand side can be made arbitrarily small, and it follows that $(u_n)$ is a Cauchy sequence and hence convergent in $C([0,T];X)$. By definition, the limit $u$ is an Euler solution of the Cauchy problem \Cref{Cauchy-Problem}, and we have proved existence.
	
	Now let $u, \hat u \in \C$ be two Euler solutions of \Cref{Cauchy-Problem}. Then there are sequences of discretizations $(\discrete_n)_n = ((\pi_{\discrete_n}, f_{\discrete_n}, u^0_{\discrete_n}))_n$ and $(\hat\discrete_n)_n = ((\pi_{\hat\discrete_n}, f_{\hat\discrete_n}, u^0_{\hat\discrete_n}))_n$ with
	\begin{align*}
		&\lim_{n\to\infty} |\pi_{\discrete_n}| = \lim_{n\to\infty} |\pi_{\hat\discrete_n}| = 0 , \\
		&\lim_{n\to\infty} \norm{f_{\discrete_n} - f}_{L^1(0, T; X)} = \lim_{n\to\infty} \normsmall{f_{\hat\discrete_n} - f}_{L^1(0, T; X)} = 0, \\
		&\lim_{n\to\infty} \xnorm{u^0_{\discrete_n} - u^0} = \lim_{n\to\infty} \xnorm{u^0_{\hat\discrete_n} - u^0} = 0
	\end{align*}
	and there are sequences $(u_n)_n$ and $(\hat u_n)_n$ in $\C$, such that $u_n$ is a solution of $(E_{\discrete_n})$ and $\hat u_n$ is a solution of $(E_{\hat\discrete_n})$ for all $n \in \N$ and
	\[ \lim_{n \to \infty} \norm{u_n - u}_\C = \lim_{n \to \infty} \norm{\hat u_n - \hat u}_\C  = 0 . \]
	Since $\lim_{n\to\infty} |\pi_{\discrete_n}| = \lim_{n\to\infty} |\pi_{\hat\discrete_n}| = 0$, there is an $n_0 \in \N$ such that $(|\pi_{\discrete_n}| \lor |\pi_{\hat\discrete_n}|) \omega < 1$ for all $n \in \N_{\ge n_0}$. By \Cref{distance-in-C}, for every $n \in \N_{\ge n_0}$, every $(\tilde u, \tilde v) \in A$ and every $g \in BV(0, T; X)$ we get
	\begin{align*}
		& \norm{u_n - \hat u_n}_\C \\* 
		\le {} &
		\mathopen{\exp}{\klammern{\varphi((|\pi_{\discrete_n}| \lor |\pi_{\hat\discrete_n}|) \omega) 2 T \omega^+}}
		\Big( \xnorm{u_{\discrete_n}^0 - \tilde u} + \xnorm{u_{\hat\discrete_n}^0 - \tilde u} \\*
		& + \norm{f_{\discrete_n}-g}_{L^1(0, T; X)} + \normsmall{f_{\hat\discrete_n}-g}_{L^1(0, T; X)} \\
		& {} + \sqrt{(|\pi_{\discrete_n}| + |\pi_{\hat\discrete_n}|)^2 + |\pi_{\discrete_n}| T + |\pi_{\hat\discrete_n}| T} \cdot \left(\essVar(g) + \Xnorm{g(0+) - \tilde v} \right) \Big) .
	\end{align*}
	Therefore
	\begin{align*}
		\norm{u - \hat u}_\C &= \lim_{n \to \infty} \norm{u_n - \hat u_n}_\C \le \mathopen{2 e^{2 T \omega^+}} \klammern{ \xnorm{u^0 - \tilde u} + \norm{f - g}_{L^1(0, T; X)} } .
	\end{align*}
	Since $\dom A$ is dense in its closure and $BV(0, T; X)$ is dense in $L^1(0, T; X)$, the right hand side can be made arbitrarily small. Thus, $u = \hat u$ and we have proved uniqueness.
\end{proof}

We are now able to establish the wellposedness of \Cref{Cauchy-Problem} as shown by Crandall and Evans in \cite{CrandallEvans1975} as well as the estimates \Cref{wellposedness-1} and \Cref{wellposedness-2}.

\begin{theorem}
\label{wellposedness}
    Let $A \subseteq X \times X$ be $m$-accretive of type $\omega \in \R$. Then for every $f \in L^1(0, T; X)$ and for every $u^0 \in \overline{\dom A}$ there exists a unique Euler solution $u \in C([0, T]; X)$ of \Cref{Cauchy-Problem}.
    
    If for $(\hat u, \hat v) \in A$ we define $f_{\hat v} \in L^1(-T, T; X)$ by
    \[ f_{\hat v}(\tau) \coloneqq \begin{cases}
        f(\tau) &\text{if } \tau \ge 0 , \\
        \hat v &\text{if } \tau < 0 ,
    \end{cases} \]
    then, for all $t, \hat t \in [0, T]$,
    \[ \xnorm{u(t) - u(\hat t)} \le \left( e^{t \omega} + e^{\hat t \omega} \right) \xnorm{u^0-\hat u} + \int_0^{t \lor \hat t} e^{\tau \omega} \xnorm{f_{\hat v} (t-\tau) - f_{\hat v} (\hat t - \tau)} \diff\tau \numberthis \label{wellposedness-1} . \]
    Moreover, if $f, \widehat f \in L^1(0, T; X)$ and $u^0, \hat u^0 \in \overline{\dom A}$, $u$ is the Euler solution of \Cref{Cauchy-Problem} and $\hat u$ is the Euler solution of $\textnormal{(CP}\colon \widehat f, \hat u^0\textnormal{)}$,
    then, for every $t \in [0, T]$,
    \begin{align*}
        \xnorm{ u (t) - \hat u (t) } &\le e^{t \omega} \xnorm{u^0-\hat u^0} + \int_0^t e^{(t-s)\omega} [u(s)-\hat u(s), f(s) - \widehat f(s)] \diff s . \numberthis \label{wellposedness-2}
    \end{align*}
\end{theorem}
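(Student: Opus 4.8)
The plan is to establish the three assertions separately, using the sharp equidistant estimates \Cref{equidistant-time} and \Cref{bessereOmegaAbschaetzung} (rather than \Cref{main-result}, whose $\omega^{+}$ would be too lossy here) together with \Cref{wellposedness-for-accretive-and-existing-Euler-sequence} for existence and uniqueness. For the latter, I would first note that $m$-accretivity of type $\omega$ makes $(I+\lambda A)^{-1}$ a single-valued operator defined on all of $X$ whenever $\lambda\omega<1$. Fixing equidistant partitions $\pi_n$ of $[0,T]$ with $|\pi_n|=T/N_n\to 0$, step functions $f_n$ adapted to $\pi_n$ with $f_n\to f$ in $L^1(0,T;X)$, and $u_n^0=u^0$, every Euler step of $(E_{\discrete_n})$ is a resolvent evaluation, hence solvable for $n$ large; \Cref{wellposedness-for-accretive-and-existing-Euler-sequence} then yields the unique Euler solution $u$.

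For the modulus of continuity \Cref{wellposedness-1}, by symmetry of the right-hand side I may assume $t\ge\hat t$. Using the sequence $(\discrete_n)$ above, with solutions $u_n\to u$ uniformly, I would apply \Cref{equidistant-time} with $\discrete=\hat\discrete=\discrete_n$, with $(u,v)$ replaced by $(\hat u,\hat v)$, and with partition points $t_i^{(n)}\to t$, $\hat t_j^{(n)}\to\hat t$, so that $a_{i,j}\to\xnorm{u(t)-u(\hat t)}$. Bounding every bracket by the norm of its second argument and dropping the second integral (which vanishes since $t_i\ge\hat t_j$), I then let $n\to\infty$, using $\varphi(|\pi_n|\omega)\to 1$, $\floor{\cdot}_{\pi_n},\ceiling{\cdot}_{\pi_n}\to\mathrm{id}$, $u_n\to u$ uniformly, $f_n\to f$ in $L^1$, and continuity of translation in $L^1$ for the third integral. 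The prefactors become $e^{t\omega}$ and $e^{\hat t\omega}$; after the substitutions $s=t_i-\tau$ in the first integral and $s=\hat t_j-\tau$ in the third, and rewriting the integrands via $f_{\hat v}$, the first integral becomes $\int_{\hat t}^{t}e^{s\omega}\xnorm{f_{\hat v}(t-s)-f_{\hat v}(\hat t-s)}\diff s$ and the third becomes $\int_{0}^{\hat t}e^{s\omega}\xnorm{f_{\hat v}(t-s)-f_{\hat v}(\hat t-s)}\diff s$; their sum is exactly the integral in \Cref{wellposedness-1}.

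For the comparison estimate \Cref{wellposedness-2}, let $u,\hat u$ be the Euler solutions of \Cref{Cauchy-Problem} and \CP{\widehat f,\hat u^0}. I pick equidistant partitions $\pi_n$ with $|\pi_n|=h_n\to 0$ and step functions $f_n\to f$, $\widehat f_n\to\widehat f$ in $L^1$, adapted to $\pi_n$, with $u_n^0=u^0$, $\hat u_n^0=\hat u^0$, and let $u_n,\hat u_n$ be the corresponding Euler solutions, so $u_n\to u$, $\hat u_n\to\hat u$ uniformly. Applying \Cref{bessereOmegaAbschaetzung} with $i=j$ (so $h_i=\hat h_j=h_n$, whence the first two summands vanish and the coefficient of $a_{i,j}$ is $1$) gives, with $q_n=\exp(\varphi(h_n\omega)h_n\omega)=(1-h_n\omega)^{-1}$,
\[
\xnorm{u_n(t_{i+1})-\hat u_n(t_{i+1})}\le q_n\Big(\xnorm{u_n(t_i)-\hat u_n(t_i)}+h_n\,[u_n(t_{i+1})-\hat u_n(t_{i+1}),\,f_n(t_i)-\widehat f_n(t_i)]\Big).
\]
Iterating this discrete Gronwall inequality and using $q_n^{\,i-k}=\exp(\varphi(h_n\omega)(t_i-t_k)\omega)$, one obtains
\[
\xnorm{u_n(t_i)-\hat u_n(t_i)}\le e^{\varphi(h_n\omega)t_i\omega}\xnorm{u^0-\hat u^0}+\int_0^{t_i}\Phi_n(s)\diff s ,
\]
where $\Phi_n(s)=\exp(\varphi(h_n\omega)(t_i-\floor{s}_{\pi_n})\omega)\,[u_n(\ceiling{s}_{\pi_n})-\hat u_n(\ceiling{s}_{\pi_n}),\,f_n(s)-\widehat f_n(s)]$; it is here essential that $f_n,\widehat f_n$ are adapted to $\pi_n$, so that the Riemann sum is exactly this integral. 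With $t_i^{(n)}\to t$, the left-hand side tends to $\xnorm{u(t)-\hat u(t)}$ and the first term to $e^{t\omega}\xnorm{u^0-\hat u^0}$, which leaves the integral term to be bounded from above.

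The main obstacle is precisely this last limit: since $f_n-\widehat f_n\to f-\widehat f$ only in $L^1$, the discrete bracket sums converge to the bracket integral merely in a $\limsup$ sense. I would pass to a subsequence along which $f_n-\widehat f_n\to f-\widehat f$ pointwise a.e.\ and is dominated by an $L^1$ function; then, since $u_n-\hat u_n\to u-\hat u$ uniformly and the bracket $(x,y)\mapsto[x,y]$ is upper semicontinuous (being an infimum over $\lambda>0$ of continuous functions), one gets $\limsup_n\Phi_n(s)\le e^{(t-s)\omega}[u(s)-\hat u(s),f(s)-\widehat f(s)]$ for a.e.\ $s$, and the reverse Fatou lemma yields $\limsup_n\int_0^{t_i}\Phi_n(s)\diff s\le\int_0^{t}e^{(t-s)\omega}[u(s)-\hat u(s),f(s)-\widehat f(s)]\diff s$; a standard subsequence argument upgrades this to the full $\limsup$, giving \Cref{wellposedness-2}. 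By contrast, the remaining steps — solvability of the Euler iterates and the two limit passages in \Cref{wellposedness-1} — are routine once \Cref{equidistant-time} and \Cref{bessereOmegaAbschaetzung} are available.
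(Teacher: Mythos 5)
Your proposal is correct, and for the existence/uniqueness part and for \Cref{wellposedness-1} it follows the paper's route exactly (equidistant partitions, $m$-accretivity, \Cref{wellposedness-for-accretive-and-existing-Euler-sequence}, then \Cref{equidistant-time} with $(u,v)=(\hat u,\hat v)$, brackets bounded by norms, continuity of translation in $L^1$). For \Cref{wellposedness-2}, your route is genuinely different and in fact cleaner: instead of invoking \Cref{equidistant-time} -- which, being a two-sided grid estimate, necessarily carries a detour through an arbitrary $(\hat u,\hat v)\in A$ and produces a term of the form $\xnorm{u_n^0-\hat u}+\xnorm{\hat u_n^0-\hat u}$ that only becomes $\xnorm{u^0-\hat u^0}$ after an additional density/approximation argument (which the paper's proof glosses over, and which its displayed intermediate estimate, reading \enquote{$\xnorm{u^0-\hat u}+\xnorm{u^0-\hat u}$} with a norm in place of the bracket in the integrand, does not transparently support) -- you iterate \Cref{bessereOmegaAbschaetzung} directly along the diagonal $i=j$ with equal step sizes, where the off-diagonal coefficients vanish, so the recursion closes at $a_{0,0}=\xnorm{u_n^0-\hat u_n^0}$ with no auxiliary $(\hat u,\hat v)$ needed. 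You also supply the limit argument for the bracket integral (a.e.~convergent dominated subsequence of $f_n-\widehat f_n$, uniform convergence of $u_n-\hat u_n$, upper semicontinuity of the bracket as an infimum of continuous functions, reverse Fatou), which the paper leaves implicit behind \enquote{taking the limit yields.} Two small points worth tightening: the discrete starting term should read $\xnorm{u_n^0-\hat u_n^0}$ before passing to the limit, and for the $\limsup$ conclusion no subsequence-upgrade is actually needed, since the left-hand side $\xnorm{u_n(t_i^{(n)})-\hat u_n(t_i^{(n)})}$ converges outright to $\xnorm{u(t)-\hat u(t)}$, so the inequality established along the chosen subsequence already gives the claim.
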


\begin{proof}
    Let $(\discrete_n)$ be a sequence of discretizations of the form $\discrete_n = (\pi_{\discrete_n}, f_{\discrete_n}, u^0_{\discrete_n})$ satisfying
    \begin{align*}
        &\sup_{n \in \N} |\pi_{\discrete_n}| \omega < 1, \\
        &\lim_{n\to\infty} |\pi_{\discrete_n}| = 0, \\
        &\lim_{n\to\infty} \norm{f_{\discrete_n} - f}_{L^1(0, T; X)} = 0 \quad \text{ and} \\
        &\lim_{n\to\infty} \xnorm{u^0_{\discrete_n} - u^0} = 0.
    \end{align*}
    Note that such a sequence exists for every $f \in L^1(0, T; X)$ since step functions are dense in $L^1(0, T; X)$. Specifically, one could choose the conditional expectation
    \[ f_{\discrete_n} (t) \coloneqq \frac{1}{t_{i+1} - t_i} \int_{t_i}^{t_{i+1}} f(\tau) \diff\tau \]
    for $t \in [t_i, t_{i+1})$, $i \in \set{0, \ldots, N-1}$. 
    Since $A$ is $m$-accretive and $|\pi_{\discrete_n}| \omega < 1$, the Euler scheme $(E_{\discrete_n})$ has a solution for all $n \in \N$, so by \Cref{wellposedness-for-accretive-and-existing-Euler-sequence}, the Cauchy problem \Cref{Cauchy-Problem} has a unique Euler solution.

    To prove $\Cref{wellposedness-1}$, we take a sequence of discretizations $(\hat \discrete _n)$ of the form 
    \[ \hat \discrete_n = (\pi_n \colon 0 = \frac{0}{n} T < \frac{1}{n} T < \ldots < \frac{n}{n} T = T , f_n, u^0) , \]
    such that
    \[
        \lim_{n\to\infty} \norm{f_n - f}_{L^1(0, T; X)} = 0 .
    \]
    For every $n \in \N$ let $u_n \in C([0, T]; X)$ be the solution of the corresponding Euler scheme $(E_{\hat\discrete_n})$.
    By \Cref{equidistant-time}, for all $n \in \N_{> \omega T}$, every $(\hat u, \hat v) \in A$ and all $t^n_i, t^n_j \in \pi_n$
    \begin{align*}
        \xnorm{u_n(t^n_i) - u_n(t^n_j)} \le{}& \mathopen{\exp}\klammern{{\varphi(|\pi_n|\omega)) t^n_i \omega}} \xnorm{u^0-\hat u} + \mathopen{\exp}\klammern{{\varphi(|\pi_n|\omega)) t^n_j \omega}} \xnorm{u^0-\hat u} \\*
        &+ \int_0^{|t^n_i-t^n_j|} \exp( \varphi(|\pi_n|\omega)) ((t^n_i \lor t^n_j) - \floor{\tau}_{\pi_n}) \omega ) \xnorm{f_n (\tau) - \hat v} \diff\tau \\*
        &+ \int_0^{t^n_i \land t^n_j} \exp(\varphi(|\pi_n|\omega)) ((t^n_i \land t^n_j) - \floor{\tau}_{\pi_n}) \omega) \\*
        &\hphantom{{}+\int_0^{t^n_i \land t^n_j}} \xnorm{ f_n(\tau + (t^n_i-t^n_j)^+) - f_n(\tau + (t^n_j-t^n_i)^+) } \diff\tau .
    \end{align*}
    Taking the limit as $n \to \infty$, $t^n_i \to t$ and $t^n_j \to \hat t$ yields
    \begin{align*}
        \xnorm{u(t) - u(\hat t)} \le {} & \klammern{ e^{t \omega} + e^{\hat t \omega} } \xnorm{u^0-\hat u} + \int_0^{|t-\hat t|} e^{((t \lor \hat t) - \tau) \omega} \xnorm{f(\tau) - \hat v} \diff\tau \\*
        & + \int_0^{t_i \land \hat t_j} e^{((t \land \hat t)-\tau)\omega} \xnorm{f(\tau + (t-\hat t)^+)) - f(\tau + (\hat t-t)^+)} \\
        {}={} & \left( e^{t \omega} + e^{\hat t \omega} \right) \xnorm{u^0-\hat u} + \int_0^{t \lor \hat t} e^{\tau \omega} \xnorm{f_{\hat v} (t-\tau) - f_{\hat v} (\hat t - \tau)} \diff\tau .
    \end{align*}

	To prove $\Cref{wellposedness-2}$, we take two sequences of discretizations $(\discrete _n)$ and $(\hat \discrete _n)$ of the form 
	\begin{align*}
		\discrete_n &= (\pi_n \colon 0 = \frac{0}{n} T < \frac{1}{n} T < \ldots < \frac{n}{n} T = T , f_n, u_n^0) \quad \text{and} \\*
		\hat\discrete_n &= (\pi_n, \widehat f_n, \hat u_n^0)
	\end{align*}
	such that
	\begin{align*}
		&\lim_{n\to\infty} \norm{f_n - f}_{L^1(0, T; X)} = 0, \\
		&\lim_{n\to\infty} \normsmall{\widehat f_n - \widehat f}_{L^1(0, T; X)} = 0, \\
		&\lim_{n\to\infty} \xnorm{u_n^0 - u^0} = 0 \quad \text{and} \\
		&\lim_{n\to\infty} \xnorm{\hat u_n^0 - \hat u^0} = 0.
	\end{align*}
	Let $u_n \in C([0, T]; X)$ be the solution of the corresponding Euler scheme $(E_{\discrete_n})$ and let $\hat u_n \in C([0, T]; X)$ be the solution of $(E_{\hat\discrete_n})$.
	By \Cref{equidistant-time}, for all $n \in \N_{> \omega T}$, every $(\hat u, \hat v) \in A$ and all $t^n_i \in \pi_n$,
	\begin{align*}
		\xnorm{u_n(t^n_i) - \hat u_n(t^n_i)} \le{}& \mathopen{\exp}\klammern{{\varphi(|\pi_n|\omega) t^n_i \omega}} \left( \xnorm{u^0-\hat u} + \xnorm{u^0-\hat u} \right) \\*
		&+ \int_0^{t^n_i} \mathopen{\exp} (\varphi(|\pi_n|\omega) (t^n_i - \floor{\tau}_{\pi_n}) \omega) \xnorm{ f_n(\tau) - \widehat f_n(\tau) } \diff\tau .
	\end{align*}
	Taking the limit as $n \to \infty$ and $t^n_i \to t$ yields
	\[ \xnorm{ u (t) - \hat u (t) } \le e^{t \omega} \xnorm{u^0-\hat u^0} + \int_0^t e^{(t-s)\omega} [u(s)-\hat u(s), f(s) - \widehat f(s)] \diff s . \]
	This completes the proof.
\end{proof}

\begin{theorem}[Crandall-Liggett \cite{CrandallLiggett1971}]
	Let $A \subseteq X \times X$ and $\omega \in \R$.
	Suppose that $A$ is accretive of type $\omega$ and that $A$ satisfies the range condition.
	Then for every initial value $u^0 \in \overline{\dom A}$ the Cauchy problem \CP{0, u^0} admits a unique Euler solution $u \in \C$.
	Moreover, if $u$ is an Euler solution for initial value $u^0 \in \overline{\dom A}$ and $\hat u$ is an Euler solution for initial value $\hat u^0 \in \overline{\dom A}$, then, for every $t \in [0, T]$,
	\[ \xnorm{ u(t) - \hat u(t) } \le e^{\omega t} \xnorm{u^0 - \hat u^0} . \]
	If, in addition, $u^0 \in \dom A$, then $u$ is Lipschitz continuous with Lipschitz constant $e^{\omega T} \setnorm{A u^0}$.
\end{theorem}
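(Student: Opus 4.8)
The plan is to deduce all three assertions from the preceding difference estimates; the crucial observation is that for $f\equiv0$ on equal equidistant partitions the ``error'' terms in \Cref{equidistant-time} vanish, so that lemma yields the sharp constants. For existence I would first note that the range condition makes the Euler scheme with $f_\discrete\equiv0$ and $u_\discrete^0=u^0$ solvable on any partition $\pi_\discrete$ with $|\pi_\discrete|<\lambda_0$: since $u^0\in\overline{\dom A}\subseteq\range(I+h_0A)$ there is $u_\discrete(t_1)\in\dom A$ with $\bigl(u_\discrete(t_1),h_0^{-1}(u^0-u_\discrete(t_1))\bigr)\in A$, and inductively $u_\discrete(t_i)\in\dom A\subseteq\overline{\dom A}\subseteq\range(I+h_iA)$ gives $u_\discrete(t_{i+1})\in\dom A$, the affine interpolant solving \Cref{Euler-scheme}. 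Choosing partitions $\pi_{\discrete_n}$ with $|\pi_{\discrete_n}|\to0$ (and $<\lambda_0$) then produces discretizations $\discrete_n=(\pi_{\discrete_n},0,u^0)$ meeting the hypotheses of \Cref{wellposedness-for-accretive-and-existing-Euler-sequence} with $f=0$, which already gives the unique Euler solution.

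For the contraction estimate, let $u,\hat u$ be the Euler solutions for $u^0,\hat u^0\in\overline{\dom A}$; by the previous step they are the uniform limits of the Euler polygons $u_n,\hat u_n$ built on the common equidistant partition $\pi_n=\{0,\frac1nT,\dots,T\}$ with $f\equiv0$ and initial data $u^0$, $\hat u^0$. For $n>T\omega$ I would apply \Cref{equidistant-time} with $i=j$: the first two integrals have empty range (since $\hat t_i=t_i$) and the third has bracket $[\,\cdot\,,0]=0$, leaving $\xnorm{u_n(t_i)-\hat u_n(t_i)}\le e^{\varphi(|\pi_n|\omega)t_i\omega}\bigl(\xnorm{u^0-u}+\xnorm{\hat u^0-u}\bigr)$ for every $(u,v)\in A$. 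Taking the infimum over $u\in\dom A$ (which, by density of $\dom A$ in $\overline{\dom A}$, equals $\xnorm{u^0-\hat u^0}$), then $i=i_n$ with $t_{i_n}\to t$ and $\varphi(|\pi_n|\omega)\to1$, yields $\xnorm{u(t)-\hat u(t)}\le e^{\omega t}\xnorm{u^0-\hat u^0}$.

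For the Lipschitz regularity, assume $u^0\in\dom A$, fix $v\in Au^0$ and keep the equidistant setup with $f\equiv0$, $u_\discrete^0=u^0$. Applying \Cref{equidistant-time} with $\hat\discrete=\discrete$ and the point $(u^0,v)\in A$, the terms $\xnorm{u_\discrete^0-u^0}$ vanish, the surviving bracket reduces to $[\,\cdot\,,-v]\le\xnorm{v}$ (the other being $[\,\cdot\,,0]=0$), and for $t_i\ge t_j$ one obtains $\xnorm{u_n(t_i)-u_n(t_j)}\le\int_0^{t_i-t_j}e^{\varphi(|\pi_n|\omega)(t_i-\floor{\tau}_{\pi_n})\omega}\xnorm{v}\diff\tau$. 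Letting $n\to\infty$ with $t_i\to t$, $t_j\to s$, $\floor{\tau}_{\pi_n}\to\tau$, $\varphi(|\pi_n|\omega)\to1$ gives $\xnorm{u(t)-u(s)}\le\bigl(\int_s^t e^{\sigma\omega}\diff\sigma\bigr)\xnorm{v}$; the infimum over $v\in Au^0$ together with $\int_s^t e^{\sigma\omega}\diff\sigma\le e^{\omega T}|t-s|$ gives the Lipschitz constant $e^{\omega T}\setnorm{Au^0}$.

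None of the steps is genuinely hard once the difference estimates are available; the one point to get right is to invoke \Cref{equidistant-time} rather than the coarser \Cref{main-result}/\Cref{distance-in-C} — the latter would introduce a spurious factor~$2$ in the contraction and lose the sharp Lipschitz constant — after which only routine bookkeeping of the limits $\varphi(|\pi_n|\omega)\to1$, $\floor{\tau}_{\pi_n}\to\tau$ and of the density of $\dom A$ in $\overline{\dom A}$ remains.
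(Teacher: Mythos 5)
Your proposal follows essentially the same route as the paper: existence of Euler‐scheme solutions from the range condition, existence/uniqueness of the Euler solution via \Cref{wellposedness-for-accretive-and-existing-Euler-sequence}, and the sharp contraction and regularity estimates via \Cref{equidistant-time} with the ``empty'' error terms vanishing for $f\equiv 0$; the paper's proof is terse and simply cites the proof of \Cref{wellposedness}, which rests on the same lemma, so this matches. Your identification that \Cref{equidistant-time} rather than \Cref{distance-in-C} is needed to avoid the spurious factor $2$ is exactly the point the paper makes implicitly by going through \Cref{wellposedness-2} and \Cref{wellposedness-1}.

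One step does not go through as written: the final inequality $\int_s^t e^{\sigma\omega}\diff\sigma\le e^{\omega T}|t-s|$ is false when $\omega<0$. Indeed, for $s=0$ and $t$ small, $\int_0^t e^{\sigma\omega}\diff\sigma\approx t$ while $e^{\omega T}t<t$, because $e^{\sigma\omega}$ is decreasing and its maximum on $[s,t]$ is attained at $\sigma=s$, not at $T$. What your computation actually gives is $\int_s^t e^{\sigma\omega}\diff\sigma\le e^{\omega^+ T}|t-s|$, hence Lipschitz constant $e^{\omega^+ T}\,\setnorm{Au^0}$. This is consistent with \Cref{Lipschitz-if-BV-and-dom-A}, which is stated with $e^{T\omega^+}$. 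The discrepancy is in the statement of the Crandall--Liggett theorem as reproduced in the paper (which should read $e^{\omega^+ T}$ to be correct for all $\omega$), not in the substance of your argument; but the step as you wrote it is an inequality that does not hold, and you should replace $e^{\omega T}$ by $e^{\omega^+ T}$ in that last estimate.
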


\begin{proof}
	The existence and uniqueness of solutions, as well as the stability estimate, are proved in the same way as in the proof of \Cref{wellposedness}. The only difference is the existence of solutions of appropriate implicit Euler schemes. For zero right hand sides and for initial values $u^0 \in \overline{\dom A}$, this existence follows from the range condition.
\end{proof}

\begin{definition} \label{Definition-generalized-domain}
	For any $B \subseteq X$ we define the \define{set norm}
	\[ \setnorm{B} \coloneqq \inf_{x \in B} \xnorm{x} . \]
	Let $A \subseteq X \times X$. For $x \in X$ we define
	\[ \Anorm{x} \coloneqq \liminf_{\hat x \to x} \setnorm{A \hat x} = \sup_{r > 0} \inf_{\hat x \in B(x, r)}  \setnorm{A \hat x} , \]
	where $B(x, r) \coloneqq \set{\hat x \in X \colon \xnorm{x - \hat x} \le r}$.
	The set
	\[ \generalizeddom A \coloneqq \set{x \in X \colon \Anorm{x} < \infty } \]
	is called the \define{generalized domain of $A$}.
\end{definition}

\begin{remark}
	Let $A \subseteq X \times X$. Since $\Anorm{x} = \liminf_{\hat x \to x} \setnorm{A \hat x} \le |Ax|$ for all $x \in X$ and $\Anorm{x} = \infty$ if $x \not \in \overline{\dom A}$, we have
	\[ \dom A \subseteq \generalizeddom A \subseteq \overline{\dom A} . \]
\end{remark}

\begin{lemma}
	Let $A \subseteq X \times X$ and $u, x \in X$. Then $ |u|_{A+x} \le \Anorm{u} + \xnorm{x} . $
\end{lemma}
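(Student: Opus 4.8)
The plan is to reduce the inequality to an elementary estimate for set norms under translation and then to pass to the $\liminf$ that defines $\anorm{\cdot}$.

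First I would observe that for every $\hat u \in X$ one has $(A+x)\hat u = A\hat u + x$ as subsets of $X$. Indeed, by the definition $A + x = \setsmall{(w, v+x) \colon (w,v) \in A}$, a vector $y$ satisfies $(\hat u, y) \in A+x$ if and only if $y = v + x$ for some $v$ with $(\hat u, v) \in A$, i.e.\ $y \in A\hat u + x$. (If $A\hat u = \emptyset$, then also $(A+x)\hat u = \emptyset$; with the convention $\inf\emptyset = +\infty$ every inequality below still holds, both sides being $+\infty$ on the relevant part.)

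Next I would estimate the set norm pointwise in $\hat u$: by the triangle inequality in $X$,
\[ \Setnorm{(A+x)\hat u} = \inf_{v \in A\hat u} \Xnorm{v + x} \le \inf_{v \in A\hat u} \klammern{\Xnorm{v} + \xnorm{x}} = \Setnorm{A\hat u} + \xnorm{x} , \]
since the constant $\xnorm{x}$ may be pulled out of the infimum.

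Finally I would take $\liminf_{\hat u \to u}$ on both sides; as adding a constant commutes with $\liminf$ (equivalently, with $\sup_{r>0}\inf_{\hat u \in B(u,r)}$ in the other form of the definition), this gives
\[ |u|_{A+x} = \liminf_{\hat u \to u} \Setnorm{(A+x)\hat u} \le \liminf_{\hat u \to u} \Setnorm{A\hat u} + \xnorm{x} = \anorm{u} + \xnorm{x} , \]
which is the claim. I do not expect any genuine obstacle here: the only points needing a moment of care are the empty-set/infinite-value conventions for $\setnorm{\cdot}$ and the (routine) translation invariance of $\liminf$ under addition of a constant.
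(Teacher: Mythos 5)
Your proof is correct and follows essentially the same route as the paper: unfold $|u|_{A+x}$ via the definition of the set norm, apply the triangle inequality inside the infimum, and pass to the $\liminf$ by pulling out the additive constant $\xnorm{x}$. Your explicit remarks on the set identity $(A+x)\hat u = A\hat u + x$ and the empty-set convention are minor clarifications that the paper leaves implicit.
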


\begin{proof}
	As a direct consequence of \Cref{Definition-generalized-domain}, we get
	\begin{align*}
		|u|_{A+x} &= \liminf_{\hat x \to x} \setnorm{A \hat x + x} \\
		&= \liminf_{\hat x \to x}  \inf_{v \in A \hat x} \xnorm{v + x} \\
		&\le \liminf_{\hat x \to x}  \inf_{v \in A \hat x} \klammern{ \xnorm{v} + \xnorm{x} } \\
		&= \liminf_{\hat x \to x} \setnorm{A \hat x} + \xnorm{x} \\
		&= \Anorm{u} + \xnorm{x} .
	\end{align*}
	This completes the proof.
\end{proof}

\begin{corollary} \label{Lipschitz-if-BV-and-dom-A}
    Let $A \subseteq X \times X$ be $m$-accretive of type $\omega \in \R$, $f \in BV(0, T; X)$, $u^0 \in {\generalizeddom A}$ 
    and $u \in C([0, T]; X)$ be the Euler solution of \Cref{Cauchy-Problem}. Then $u$ is Lipschitz continuous with Lipschitz constant
	\[ \mathopen{ e^{T\omega^+} } \klammern{ |u^0|_{A-f(0+)} + \essVar(f) } . \]
\end{corollary}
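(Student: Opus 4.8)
The plan is to derive the bound from the stability estimate \Cref{wellposedness-1}, which already controls $\xnorm{u(t) - u(\hat t)}$ in terms of translates of the data. The hypotheses of \Cref{wellposedness} are met here, since $f \in BV(0, T; X) \subseteq L^1(0, T; X)$ and $u^0 \in \generalizeddom A \subseteq \overline{\dom A}$. Fix $t, \hat t \in [0, T]$; as the right-hand side of \Cref{wellposedness-1} is symmetric in $t$ and $\hat t$, we may assume $\hat t \le t$ and set $\delta \coloneqq t - \hat t = |t - \hat t|$. Then, for every $(\hat u, \hat v) \in A$,
\[
	\xnorm{u(t) - u(\hat t)} \le \klammern{e^{t\omega} + e^{\hat t\omega}} \xnorm{u^0 - \hat u} + \int_0^t e^{\tau\omega} \xnorm{f_{\hat v}(t - \tau) - f_{\hat v}(\hat t - \tau)} \diff\tau ,
\]
and since $0 \le \tau \le t \le T$ every exponential factor is bounded by $e^{T\omega^+}$.

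First I would estimate the integral. The substitution $\sigma = t - \tau$ rewrites it as $\int_0^t e^{(t - \sigma)\omega} \xnorm{f_{\hat v}(\sigma) - f_{\hat v}(\sigma - \delta)} \diff\sigma$; both arguments stay in $[-T, T]$ because $0 \le \sigma \le t \le T$ and $-T \le -\delta \le \sigma - \delta \le \hat t \le T$, so $f_{\hat v}$ is evaluated only where it is defined. Passing to the minimal-variation representative of $f_{\hat v} \in L^1(-T, T; X)$ and using the elementary shift estimate $\int \xnorm{g(\cdot + \delta) - g(\cdot)} \le \delta \cdot \essVar(g)$ for functions of bounded variation (obtained, e.g., by applying Fubini to the variation measure of $f_{\hat v}$), together with the identity $\essVar(f_{\hat v}) = \essVar(f) + \xnorm{f(0+) - \hat v}$ already used in the proof of \Cref{main-result}, the integral is at most $e^{T\omega^+} \delta \klammern{\essVar(f) + \xnorm{\hat v - f(0+)}}$. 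Collecting terms, for every $(\hat u, \hat v) \in A$,
\[
	\xnorm{u(t) - u(\hat t)} \le 2 e^{T\omega^+} \xnorm{u^0 - \hat u} + e^{T\omega^+} |t - \hat t| \klammern{\essVar(f) + \xnorm{\hat v - f(0+)}} .
\]

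It remains to optimise over $(\hat u, \hat v) \in A$. For a fixed $\hat u$, taking the infimum over $\hat v \in A\hat u$ replaces $\xnorm{\hat v - f(0+)}$ by $\setnorm{A\hat u - f(0+)} = \setnorm{(A - f(0+))\hat u}$. Letting $\hat u \to u^0$ and invoking \Cref{Definition-generalized-domain} applied to the operator $A - f(0+)$, namely $\liminf_{\hat u \to u^0} \setnorm{(A - f(0+))\hat u} = |u^0|_{A - f(0+)}$, we can pick, for any $\varepsilon > 0$, a pair $(\hat u, \hat v) \in A$ with $\xnorm{u^0 - \hat u} < \varepsilon$ and $\xnorm{\hat v - f(0+)} < |u^0|_{A - f(0+)} + \varepsilon$; letting $\varepsilon \to 0$ yields
\[
	\xnorm{u(t) - u(\hat t)} \le e^{T\omega^+} \klammern{|u^0|_{A - f(0+)} + \essVar(f)} |t - \hat t| ,
\]
which is the asserted Lipschitz bound (the right-hand side being finite, since by the lemma immediately preceding this corollary $|u^0|_{A - f(0+)} \le \anorm{u^0} + \xnorm{f(0+)} < \infty$ for $u^0 \in \generalizeddom A$). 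I expect the only point requiring care to be the bookkeeping around $f_{\hat v}$ — verifying that its jump at $0$ contributes exactly $\xnorm{f(0+) - \hat v}$ to the essential variation and that the change of variables never leaves the interval $[-T, T]$ on which $f_{\hat v}$ is defined — while the remaining steps are routine.
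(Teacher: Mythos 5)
Your proof is correct and follows essentially the same route as the paper's: apply \Cref{wellposedness-1}, bound the exponentials by $e^{T\omega^+}$, estimate the integral by $|t-\hat t|\,\essVar(f_{\hat v})$ via the BV shift estimate (the paper cites \Cref{BV-shift-estimate} directly, while you rederive it, but this is the same ingredient), split $\essVar(f_{\hat v})=\essVar(f)+\Xnorm{f(0+)-\hat v}$, and then optimize over $(\hat u,\hat v)\in A$ using \Cref{Definition-generalized-domain}. The only cosmetic difference is that you optimize with an $\varepsilon$-argument whereas the paper takes an explicit approximating sequence $(u_n,v_n)$; these are equivalent.
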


\begin{proof}
	Let $(u_n, v_n)$ be a sequence in $A$ with 
	\begin{align*}
		&\lim_{n \to \infty} \xnorm{u_n -u^0} = 0 \quad \text{and} \\
		&\lim_{n \to \infty} \Xnorm{v_n - f(0+)} = \liminf_{\hat u \to u^0} \setnorm{A \hat u - f(0+)} = |u^0|_{A-f(0+)} .
	\end{align*}
	Let $t, \hat t \in [0, T]$. By \Cref{wellposedness}, we can use \Cref{wellposedness-1} and get
	\begin{align*} \xnorm{u(t) - u(\hat t)} &\le \left( e^{t \omega} + e^{\hat t \omega} \right) \xnorm{u^0-u_n} + \int_0^{t \lor \hat t} e^{\tau \omega} \xnorm{f_{v_n} (t-\tau) - f_{v_n} (\hat t - \tau)} \diff\tau \\
		&\le \mathopen{e^{T\omega^+}} \left( 2 \xnorm{u^0-u_n} + \int_0^{t \lor \hat t} \xnorm{f_{v_n} (t-\tau) - f_{v_n} (\hat t - \tau)} \diff\tau \right) \\
		&\le \mathopen{e^{T\omega^+}} \left( 2 \xnorm{u^0-u_n} + \essVar (f_{v_n}) |t-\hat t| \right) \\
		&= \mathopen{e^{T\omega^+}} \left( 2 \xnorm{u^0-u_n} + \left( \xnorm{v_n - f(0+)} + \essVar(f) \right) |t-\hat t| \right)
	\end{align*}
	for all $n \in \N$. Here we used \Cref{BV-shift-estimate} to estimate the integral term. Taking the limit as $n \to \infty$ we get
	\[  \xnorm{u(t) - u(\hat t)} \le \mathopen{e^{T\omega^+}} \klammern{ |u^0|_{A-f(0+)} + \essVar(f) } |t-\hat t| . \]
	This completes the proof.
\end{proof}

\begin{definition}
	A Banach space $X$ has the \define{Radon-Nikodym property} if
	\[ Lip(0, T; X) = W^{1, \infty} (0, T; X) . \]
\end{definition}

\begin{example}
	Every reflexive Banach space (and therefore every Hilbert space) and every separable dual space has the Radon-Nikodym property \cite{DiestelUhl1977}.
\end{example}

\begin{definition}
    A function $u \in W^{1, 1} (0, T; X)$ is a \define{strong solution} of \Cref{Cauchy-Problem}, if $u(0) = u^0$ and
	\[ \dot u (t) + A u(t) \ni f(t) \quad \text{for almost every } t \in [0, T] . \]
 A function $u \in \C$ is a \define{mild solution} of \Cref{Cauchy-Problem}, if there exist sequences $(u_n)$ in $W^{1, 1} (0, T; X)$, $(f_n)$ in $L^1 (0, T; X)$ and $(u^0_n)$ in $X$, such that $u_n$ is a strong solution of \CP{f_n, u^0_n} for all $n \in \N$,
	\begin{align*}
		& \lim_{n\to\infty} \norm{f_n - f}_{L^1(0, T; X)} = 0 , \\
		& \lim_{n\to\infty} \xnorm{u^0_n - u^0} = 0 \text{ and} \\
		& \lim_{n\to\infty} \norm{u_n - u}_\C = 0 .
	\end{align*}
\end{definition}

\begin{corollary} \label{Euler-solutions-are-strong-solutions}
	Let $X$ be a Banach space, which has the Radon-Nikodym property, and let $A \subseteq X \times X$ be quasi-$m$-accretive. Then the following statements are true.
	
	\truestatements{
		\item For every $f \in BV(0, T; X)$ and every $u^0 \in {\generalizeddom A}$ every Euler solution of \Cref{Cauchy-Problem} is a strong solution of \Cref{Cauchy-Problem}.
		\item For every $f \in L^1(0, T; X)$ and every $u^0 \in \overline{\dom A}$ every Euler solution of \Cref{Cauchy-Problem} is a mild solution of \Cref{Cauchy-Problem}.
	}
\end{corollary}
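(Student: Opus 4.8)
The plan is to prove (a) first and to deduce (b) from it by a density and stability argument.

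\textbf{Part (a).} Fix $\omega \in \R$ with $A$ $m$-accretive of type $\omega$, let $f \in BV(0, T; X)$, $u^0 \in \generalizeddom A$, and let $u$ be an Euler solution of \Cref{Cauchy-Problem}. By \Cref{Lipschitz-if-BV-and-dom-A} the function $u$ is Lipschitz, so by the Radon--Nikodym property $u \in W^{1,\infty}(0, T; X)$, hence $u \in W^{1,1}(0, T; X)$; in particular $u$ is differentiable almost everywhere (Lebesgue differentiation theorem for Bochner integrals) and $u(0) = u^0$ holds by the definition of an Euler solution. It remains to show $\dot u(t) + A u(t) \ni f(t)$ for a.e.\ $t$. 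The crucial intermediate step is the continuous integral inequality
\[
	e^{-\omega t} \xnorm{u(t) - x} - e^{-\omega s} \xnorm{u(s) - x} \le \int_s^t e^{-\omega\tau} \bracket{u(\tau) - x, f(\tau) - y} \diff\tau
\]
for all $(x, y) \in A$ and $0 \le s \le t \le T$. To derive it I would take a sequence of discretizations $\discrete_n$ as in the definition of the Euler solution $u$, choose grid points of $\pi_{\discrete_n}$ converging to $s$ and to $t$, restrict the Euler scheme $(E_{\discrete_n})$ to the corresponding subinterval (the scheme and \Cref{induktionsanfang-v2} are invariant under translating the time interval, and the restriction of $u_{\discrete_n}$ solves the restricted scheme), and pass to the limit; this uses $u_{\discrete_n} \to u$ uniformly, $f_{\discrete_n} \to f$ in $L^1$ (hence, along a subsequence, a.e.\ and dominated), the upper semicontinuity of $\bracket{\cdot, \cdot}$ in its first variable, its Lipschitz dependence on the second, and the reverse Fatou lemma.

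Now fix $t_0 \in (0, T)$ at which $u$ is differentiable and which is a Lebesgue point of $f$; almost every $t_0$ qualifies. Applying the integral inequality with $s = t_0$, $t = t_0 + h$, dividing by $h$, letting $h \downarrow 0$, and using that the right derivative of $\tau \mapsto \xnorm{w(\tau)}$ at $t_0$ equals $\bracket{w(t_0), \dot w(t_0)}$ when $w$ is differentiable there, together with the upper semicontinuity of the bracket and the Lebesgue-point property, gives
\[
	\bracket{u(t_0) - x, \dot u(t_0)} \le \bracket{u(t_0) - x, f(t_0) - y} + \omega \xnorm{u(t_0) - x} \qquad \text{for all } (x, y) \in A.
\]
Since $\bracket{a, \cdot}$ is subadditive we have $\bracket{a, f(t_0) - y} \le \bracket{a, (f(t_0) - \dot u(t_0)) - y} + \bracket{a, \dot u(t_0)}$, and combining this with the previous display (with $a = u(t_0) - x$) yields $\bracket{u(t_0) - x, (f(t_0) - \dot u(t_0)) - y} + \omega \xnorm{u(t_0) - x} \ge 0$ for every $(x, y) \in A$. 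An $m$-accretive operator of type $\omega$ is maximal among accretive operators of type $\omega$ — for $\lambda > 0$ with $\lambda\omega < 1$ the identity $\range(I + \lambda A) = X$ forces this by the usual argument — so $(u(t_0), f(t_0) - \dot u(t_0)) \in A$. Hence $u$ is a strong solution, which proves (a).

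\textbf{Part (b).} Let $f \in L^1(0, T; X)$, $u^0 \in \overline{\dom A}$ and let $u$ be an Euler solution. Choose $f_n \in BV(0, T; X)$ with $f_n \to f$ in $L^1(0, T; X)$ (step functions are dense and lie in $BV$) and $u^0_n \in \dom A \subseteq \generalizeddom A$ with $u^0_n \to u^0$. By \Cref{wellposedness} the problem $\CP{f_n, u^0_n}$ has a unique Euler solution $u_n$, which by part (a) is a strong solution of $\CP{f_n, u^0_n}$; and the stability estimate \eqref{wellposedness-2} gives $\norm{u - u_n}_\C \le e^{T\omega^+}\klammern{\xnorm{u^0 - u^0_n} + \norm{f - f_n}_{L^1(0, T; X)}} \to 0$. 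Hence $u$ is a mild solution of \Cref{Cauchy-Problem}. The main obstacle is the passage from the discrete estimate \Cref{induktionsanfang-v2} to the continuous integral inequality — controlling the restriction of Euler schemes to subintervals and the semicontinuity of the bracket under the simultaneous limits — together with the maximality argument that turns the pointwise bracket inequality into membership in $A$.
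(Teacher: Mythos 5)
Your proof is correct and follows the same overall strategy as the paper: both parts use \Cref{Lipschitz-if-BV-and-dom-A} and the Radon--Nikodym property to put the Euler solution in $W^{1,\infty}(0,T;X)$, and part (b) is in both cases the same density-plus-stability argument via \Cref{wellposedness}.

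The one genuine difference is in part (a) after the regularity step. The paper at that point simply invokes the reference \cite{Benilan1999} for the implication \enquote{Euler solution in $W^{1,\infty}$ implies strong solution}, whereas you reconstruct that implication from scratch: you pass \Cref{induktionsanfang-v2} (restricted to subintervals of the grid and using the upper semicontinuity of the bracket in its first argument, its $1$-Lipschitz dependence on the second, and reverse Fatou with a dominating $L^1$ majorant along a subsequence) to the continuous B\'enilan-type integral inequality, then differentiate at a Lebesgue point $t_0$ of $f$ at which $u$ is differentiable, use the identity between the right derivative of $\xnorm{u(\cdot)-x}$ and $\bracket{u(t_0)-x,\dot u(t_0)}$, subadditivity of $\bracket{a,\cdot}$, and finally the maximality of an $m$-accretive operator of type $\omega$ among accretive operators of type $\omega$ (which you correctly deduce from $\range(I+\lambda A)=X$ and $\lambda\omega<1$). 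This buys a self-contained argument at the cost of being longer; it replaces a citation by an explicit chain of standard but nontrivial lemmas. The individual steps are known to hold, and your sketch identifies the right delicate points (the limsup/Fatou passage and the maximality argument), so the proposal is sound, even though each of those steps would need to be written out in full to make the argument rigorous.

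One small imprecision: in part (b) you cite the stability estimate \eqref{wellposedness-2}, which involves the bracket; to arrive at $\norm{u-u_n}_{\C}\le e^{T\omega^+}\klammern{\xnorm{u^0-u^0_n}+\norm{f-f_n}_{L^1(0,T;X)}}$ you should first bound the bracket by $\xnorm{f(s)-f_n(s)}$ (and $e^{(t-s)\omega}$ by $e^{T\omega^+}$), which you implicitly do but might state. The paper uses the same estimate, so this is merely a matter of presentation.
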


\begin{proof}
	Let $X$ be a Banach space, which has the Radon-Nikodym property, and let $A \subseteq X \times X$ be $m$-accretive of type $\omega \in \R$.
	
	Let $f \in BV(0, T; X)$, $u^0 \in {\generalizeddom A}$ and $u$ be the Euler solution of \Cref{Cauchy-Problem}. Then by \Cref{Lipschitz-if-BV-and-dom-A}, $u \in Lip(0, T; X) = W^{1, \infty} (0, T; X)$. By \cite{Benilan1999}, $u$ is a strong solution of \Cref{Cauchy-Problem}.
	
	Now let $f \in L^1(0, T; X)$, $u^0 \in \overline{\dom A}$ and $u$ be the Euler solution of \Cref{Cauchy-Problem}. Since $BV(0, T; X)$ is dense in $L^1(0, T; X)$ and $\generalizeddom A$ is dense in $\overline{\dom A}$, there exist sequences $(f_n)$ in $BV(0, T; X)$ and $(u^0_n)$ in $\generalizeddom A$, such that
	\begin{align*}
		& \lim_{n\to\infty} \norm{f_n - f}_{L^1(0, T; X)} = 0 \quad \text{and} \\
		& \lim_{n\to\infty} \xnorm{u^0_n - u^0} = 0.
	\end{align*}
	For $n \in \N$ let $u_n$ be the Euler solution of \Cref{Cauchy-Problem}. By \Cref{Euler-solutions-are-strong-solutions} (a), $u_n$ is a strong solution of \Cref{Cauchy-Problem} and by \Cref{wellposedness},
	\[ \norm{u_n - u}_\C \le e^{T \omega^+} \klammern{ \xnorm{u^0_n - u^0} + \norm{f_n - f}_{L^1(0, T; X)} } . \]
	Therefore, \[ \lim_{n\to\infty} \norm{u_n - u}_\C = 0 \]
	and $u$ is a mild solution of \Cref{Cauchy-Problem}.
\end{proof}

\appendix

\section{Functions of bounded variation} \label{appendix}

In this appendix, we show some well known results concerning functions of bounded variation. Let $X$ be a Banach space with norm $\Xnorm{\cdot}$ and $a, b \in \R$ with $a < b$.

\begin{definition}
A function $f \colon [a, b] \to X$ is \define{of bounded pointwise variation}, if
\[ \variation{f} \coloneqq \sup \set{ \sum_{k=0}^{n-1} \Xnorm{f(t_{k+1})-f(t_k)} : n \in \N , t_0, \ldots, t_n \in [a, b], t_0 < \ldots < t_n } < \infty \]
and $f$ is \define{of bounded (essential) variation}, if
\[ \essVar(f) \coloneqq \inf \set{ \Var (g) \mid g\colon [a, b] \to X, f(t) = g(t) \text{ for a.e. } t \in I } < \infty . \]
For $f \in L^1(a, b; X)$ we define $\essVar(f) \coloneqq \essVar(f_r)$, where $f_r \colon [a, b] \to X$ is a representative of $f$. We denote the space of all functions in $L^1(a, b; X)$ with bounded essential variation by $BV(a, b; X)$ and endow it with the norm
\[ \norm{f}_{BV(a, b; X)} \coloneqq \norm{f}_{L^1(a, b; X)} + \essVar(f) . \numberthis \label{BV-norm-definition} \]
\end{definition}

\begin{lemma} \label{BV-continuity}
    If $f \colon [a, b] \to X$ is of bounded pointwise variation, then $f$ has a limit from the right $f(t+)$ at every $t \in [a, b)$ and a limit from the left $f(t-)$ at every $t \in (a, b]$ and 
    \[ f(t+) = f(t-) = f(t) \numberthis \label{bv-a-e-continuous} \]
    for all $t \in (a,b)$ except on a countable set.
\end{lemma}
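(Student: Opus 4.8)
The plan is to reduce everything to the behaviour of the scalar variation function. Define $V \colon [a, b] \to \R$ by $V(s) \coloneqq \variation{f|_{[a, s]}}$, with the convention $V(a) = 0$. Since $f$ is of bounded pointwise variation, $V$ is nondecreasing and bounded above by $\variation{f}$, and for $a \le s' < s \le b$ the partition inequality gives the basic estimate
\[ \Xnorm{f(s) - f(s')} \le \variation{f|_{[s', s]}} = V(s) - V(s') . \]
Everything will follow from this inequality together with completeness of $X$.

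First I would establish existence of the one-sided limits. Fix $t \in [a, b)$. Being bounded and nondecreasing, $V$ has a right limit $V(t+)$ at $t$. For any sequence $s_n \downarrow t$ with $s_n > t$ and indices $m \le n$ (so $t < s_n \le s_m$), the basic estimate gives $\Xnorm{f(s_n) - f(s_m)} \le V(s_m) - V(s_n)$, which tends to $0$ as $m, n \to \infty$ since both $V(s_m)$ and $V(s_n)$ converge to $V(t+)$. Hence $(f(s_n))$ is Cauchy in $X$ and converges; a standard interleaving of two such sequences shows the limit is independent of the chosen sequence, so $f(t+) \coloneqq \lim_{s \downarrow t} f(s)$ exists. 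The existence of $f(t-)$ for $t \in (a, b]$ is proved symmetrically using the left limit $V(t-)$.

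It remains to show that $f$ is continuous off a countable set. For $t \in (a, b)$ set $j(t) \coloneqq \Xnorm{f(t+) - f(t)} + \Xnorm{f(t) - f(t-)}$, so that $j(t) = 0$ is exactly the statement $f(t+) = f(t) = f(t-)$. The key estimate is that for any finite collection $a < t_1 < \dots < t_n < b$ one has $\sum_{k=1}^n j(t_k) \le \variation{f}$. To prove this, fix $\varepsilon > 0$ and, using the one-sided limits just constructed, choose points $s_k^- < t_k < s_k^+$ with $s_k^+ < s_{k+1}^-$ for all $k$ and with $\Xnorm{f(s_k^-) - f(t_k-)} + \Xnorm{f(s_k^+) - f(t_k+)} < \varepsilon/n$; then $\sum_{k} \bigl( \Xnorm{f(t_k) - f(s_k^-)} + \Xnorm{f(s_k^+) - f(t_k)} \bigr)$ is a sum of increments of $f$ over an admissible ordered partition of $[a, b]$, hence bounded by $\variation{f}$, while by the triangle inequality it is at least $\sum_k j(t_k) - \varepsilon$. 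Letting $\varepsilon \to 0$ yields the claim.

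Finally, for each $m \in \N$ the estimate forces $\set{t \in (a, b) : j(t) > 1/m}$ to contain at most $m \variation{f}$ points, hence to be finite; the set of all $t \in (a, b)$ with $j(t) > 0$ is the union of these sets over $m \in \N$ and is therefore countable, and off this set $f(t+) = f(t) = f(t-)$, which is the assertion. I expect the only mildly delicate point to be the bookkeeping in the third paragraph — ensuring the approximating points $s_k^\pm$ can be chosen simultaneously close to the respective one-sided limits and pairwise disjointly so that they form a single valid partition; the rest is routine once the basic estimate and completeness of $X$ are in hand.
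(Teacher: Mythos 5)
Your proposal is correct, and it takes a somewhat different and arguably cleaner route than the paper for the first half. For the existence of one-sided limits, the paper argues by contradiction: if $f(t+)$ fails to exist, it extracts a strictly decreasing sequence $(t_n)\downarrow t$ along which the increments $\Xnorm{f(t_{n+1})-f(t_n)}$ do not vanish, whence the partial sums of the variation blow up. You instead introduce the cumulative variation function $V(s)=\Var(f|_{[a,s]})$, observe that $V$ is bounded and nondecreasing (hence has one-sided limits), and then use the dominating inequality $\Xnorm{f(s)-f(s')}\le V(s)-V(s')$ plus completeness of $X$ to get a Cauchy criterion. This is a standard device but one the paper does not use, and it removes the slight fiddliness of extracting the right contradiction sequence.

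For the countability of the discontinuity set the two arguments are structurally the same idea: bound the sum of jumps $j(t_k)$ over a finite collection of points by a multiple of $\Var(f)$ and deduce that $\set{t : j(t) \geq 1/n}$ is finite. The difference is in how the jumps are approximated. The paper fixes the approximating points once so that each partial increment captures at least half the corresponding one-sided jump, producing the bound $\sum_k j(t_k)\le 2\Var(f)$. You instead run an $\varepsilon$-argument, choosing the flanking points $s_k^\pm$ to approximate $f(t_k\pm)$ to within $\varepsilon/n$ and letting $\varepsilon\to 0$, which gives the cleaner constant $\sum_k j(t_k)\le \Var(f)$. Both bounds deliver the same conclusion; yours is tighter but the difference is immaterial for countability. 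The bookkeeping point you flag — that the $s_k^\pm$ can be chosen simultaneously close and pairwise interleaved so that $s_1^-<t_1<s_1^+<s_2^-<\dots$ forms a single admissible partition — is indeed the only delicate step, and it goes through exactly as you indicate since the points $t_1<\dots<t_n$ are strictly separated.
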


\begin{proof}
	Let us first assume, that there exists a $t \in [a, b)$, such that the limit from the right $f(t+)$ does not exist. Then there exists a strictly decreasing sequence $(t_n)_{n \in \N}$ in $(t, b)$, such that $(\xnorm{f(t_{n+1})-f(t_n)})_{n \in \N}$ does not converge to 0. This implies
	\[ \sum_{k=1}^{\infty} \xnorm{f(t_{k+1})-f(t_k)} = \infty , \]
	but since $f$ is of bounded pointwise variation, we get
	\[ \sum_{k=1}^{n} \xnorm{f(t_{k+1})-f(t_k)} \le \Var(f) \]
	for all $n \in \N$.
	This is a contradiction and therefore $f(t+)$ exists for all $t \in [a, b)$. Analogously $f(t-)$ exists for all $t \in (a, b]$.
	
	To prove, that \Cref{bv-a-e-continuous} holds for all $t \in (a,b)$ except on a countable set, we define
	\[ S_n \coloneqq \set{t \in (a, b) \colon \xnorm{f(t-)-f(t)} + \xnorm{f(t)-f(t+)} \ge \frac{1}{n}} \]
	for $n \in \N$.
	For every finite collection of elements $t_1, t_2, \ldots, t_N \in S_n$ with $t_1 < t_2 < \ldots < t_N$, $N \in \N_0$, we can choose $a < \hat t_1 < \hat t_2 \ldots < \hat t_{3N}  < b$, such that for all $k \in \set{1, \ldots, N}$
	\begin{align*}
		\hat t_{3k-1} &= t_k , \\
		\xnorm{f(\hat t_{3k-2}) - f(\hat t_{3k-1})} &\ge \frac{1}{2} \xnorm{f(t_k-) - f(t_k)} \quad \text{and} \\
		\xnorm{f(\hat t_{3k-1}) - f(\hat t_{3k})} &\ge \frac{1}{2} \xnorm{f(t_k) - f(t_k+)} .
	\end{align*}
	Then
	\begin{align*}
		\frac{N}{n} &= \sum_{k=1}^N \frac{1}{n} \\
		&\le \sum_{k=1}^{N} ( \xnorm{f(t_k-)-f(t_k)} + \xnorm{f(t_k)-f(t_k+)} ) \\
		&\le \sum_{k=1}^{3N} 2 ( \xnorm{f(\hat t_{3k-2}) - f(\hat t_{3k-1})} + \xnorm{f(\hat t_{3k-1}) - f(\hat t_{3k})}) \\
		&\le 2 \sum_{k=0}^{3N-1} \xnorm{f(\hat t_{k+1}) - f(\hat t_k)} \\
		&\le 2 \Var(f) .
	\end{align*}
	Therefore $N \le 2 n \Var(f)$, so $S_n$ is finite for all $n \in \N$. Hence
	\[ \set{t \in (a, b) \colon f(t+) \neq f(t) \text{ or } f(t) \neq f(t-)} = \bigcup_{n \in \N} S_n \]
	is a countable union of finite sets and therefore countable.
\end{proof}

\begin{lemma} \label{BV-shift-estimate}
	Let $f \in BV(a, b ; X)$. Then
	\[ \int_a^{b-h} \xnorm{f(\tau+h) - f(\tau)} \diff\tau \le h \essVar (f) \]
	for all $h > 0$.
\end{lemma}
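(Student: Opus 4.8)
The plan is to reduce the statement to an elementary inequality for the variation function of a representative of $f$. First I would dispose of the trivial case $h \ge b-a$: then $[a, b-h]$ is empty, so the left-hand side is $0$ while the right-hand side $h\essVar(f)$ is nonnegative, and there is nothing to prove. So from now on assume $0 < h < b-a$.

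Next, fix a representative $g \colon [a,b] \to X$ of $f$ with $\Var(g) < \infty$, which exists because $f \in BV(a,b;X)$. Since any two representatives of $f$ agree Lebesgue-a.e. and the translation $\tau \mapsto \tau + h$ maps null sets to null sets, the integrand $\tau \mapsto \xnorm{f(\tau+h)-f(\tau)}$ is well defined a.e. and measurable, and
\[ \int_a^{b-h}\xnorm{f(\tau+h)-f(\tau)}\diff\tau = \int_a^{b-h}\xnorm{g(\tau+h)-g(\tau)}\diff\tau . \]
Now introduce the variation function $V \colon [a,b] \to [0,\infty)$, $V(t) \coloneqq \Var(g|_{[a,t]})$. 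It is nondecreasing (hence measurable) and bounded by $\Var(g)$, hence integrable on $[a,b]$, with $V(a) = 0$ and $V(b) = \Var(g)$. By additivity of the variation over subintervals and by testing with the trivial two-point partition $\{\tau,\tau+h\}$ of $[\tau,\tau+h]$ we get, for every $\tau \in [a, b-h]$,
\[ \xnorm{g(\tau+h)-g(\tau)} \le \Var\bigl(g|_{[\tau,\tau+h]}\bigr) = V(\tau+h) - V(\tau) . \]

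It therefore suffices to show the purely scalar estimate $\int_a^{b-h}\bigl(V(\tau+h)-V(\tau)\bigr)\diff\tau \le h\,\bigl(V(b)-V(a)\bigr)$ for any nondecreasing $V$. Writing $W(x) \coloneqq \int_a^x V(s)\diff s$ and substituting $s = \tau+h$ in the first integral,
\[ \int_a^{b-h}\bigl(V(\tau+h)-V(\tau)\bigr)\diff\tau = \int_{a+h}^{b} V - \int_a^{b-h} V = \bigl(W(b)-W(b-h)\bigr) - \bigl(W(a+h)-W(a)\bigr) . \]
Monotonicity of $V$ gives $W(b)-W(b-h) = \int_{b-h}^{b} V \le h\,V(b)$ and $W(a+h)-W(a) = \int_a^{a+h} V \ge h\,V(a)$, whence the scalar estimate follows. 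Combining the displays yields $\int_a^{b-h}\xnorm{g(\tau+h)-g(\tau)}\diff\tau \le h\,\Var(g)$, and taking the infimum over all representatives $g$ of $f$ gives the claimed bound $h\,\essVar(f)$. The only points requiring a little care are the representative-independence of the left-hand integral (handled by translation-invariance of null sets) and the monotone-function inequality above; neither is a genuine obstacle, so the argument is essentially routine once organized this way.
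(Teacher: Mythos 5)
Your proof is correct, and it takes a genuinely different route from the paper. The paper splits $[a, b-h]$ into length-$h$ blocks, changes variables so that each block is $[0,h]$, and for each fixed offset $\tau \in [0,h]$ recognizes $\sum_k \xnorm{f_r(t_{\tau,k+1})-f_r(t_{\tau,k})}$ as a partition sum bounded by $\Var(f_r)$; integrating over the offset then gives $h\,\Var(f_r)$. You instead majorize the integrand pointwise by the increment $V(\tau+h)-V(\tau)$ of the (scalar, nondecreasing) variation function $V(t)=\Var(g|_{[a,t]})$, reducing everything to the elementary fact that $\int_a^{b-h}\bigl(V(\tau+h)-V(\tau)\bigr)\,d\tau = \int_{b-h}^b V - \int_a^{a+h} V \le h\bigl(V(b)-V(a)\bigr)$ for a nondecreasing $V$. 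Both proofs are elementary and short; the paper's is perhaps more transparent about the "Fubini over offsets" mechanism and stays with the vector-valued function throughout, whereas yours cleanly separates the vector-valued input from a purely scalar monotone-function inequality, which some readers may find tidier. Your explicit remark about representative-independence of the left-hand integral is a nice touch the paper leaves implicit, and your disposal of $h\ge b-a$ at the outset is harmless but also unnecessary, as both arguments degenerate gracefully in that case.
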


\begin{proof}
	Let $f_r$ be a representative of $f$ and let $h > 0$.
	For $\tau \ge 0$ we set 
	\begin{align*}
		N_\tau &\coloneqq \Floor{\frac{b-a-\tau}{h}} \quad \text{and} \\*
		t_{\tau, k} &\coloneqq \tau + a + k h \text{ for } k \in \set{0, \ldots, N_{\tau}} .
	\end{align*}
	Then
	\begin{align*}
		\int_a^{b-h} \xnorm{f(\tau+h) - f(\tau)} \diff\tau 
		&= \sum_{k=0}^{ N_0 } \int_{a+kh}^{(a+(k+1)h) \land b} \xnorm{f_r(\tau+h) - f_r(\tau)} \diff\tau \\
		&= \sum_{k=0}^{N_0-1} \int_{0}^{h} \xnorm{f_r(t_{\tau, k+1}) - f_r(t_{\tau, k})} \diff\tau \\*
		&\phantom{{}={}} {} + \int_{0}^{b-a-N_0 h} \xnorm{f_r(t_{\tau, N_\tau}) - f_r(t_{\tau, N_\tau-1})} \diff\tau \\
		&= \int_0^h \sum_{k=0}^{N_\tau-1} \xnorm{f_r(t_{\tau, k+1}) - f_r(t_{\tau, k})} \diff\tau \\
		&\le \int_0^h \Var(f_r) \diff\tau \\
		&= \Var(f_r) h .
	\end{align*}
	Taking the infimum over all representatives $f_r$ of $f$ gives the result.
\end{proof}

Note that if $f \in BV(a, b; X)$, then $f$ has a representative $f_r$ that is of bounded pointwise variation. By \Cref{BV-continuity}, for every $t \in [a, b)$ the limit from the right $f_r(t+)$ exists. If $\widehat{f}_r$ is also a representative of $f$ that is of bounded pointwise variation, then $f_r - \widehat{f}_r$ is also of bounded pointwise variation and $f_r - \widehat{f}_r = 0$ almost everywhere. Therefore $(f_r - \widehat{f}_r)(t+) = 0$ and hence
\[ f_r (t+) = (f_r - \widehat{f}_r) (t+) + \widehat{f}_r (t+) = \widehat{f}_r (t+) . \]
We have shown, that $f_r(t+)$ does not depend on the choice of $f_r$. Therefore the following definition is well-defined.

\begin{definition}
	Let $f \in BV(a, b; X)$ and $t \in [a, b)$. Then
	\[ f(t+) \coloneqq f_r(t+) , \]
	where $f_r$ is a representative of $f$ that is of bounded pointwise variation.
\end{definition}

We could replace the $L^1$-norm in the definition of the $BV$-norm (see \Cref{BV-norm-definition}) by $\xnorm{f(a+)}$ and get an equivalent norm. 
More precisely, we obtain the following lemma.

\begin{lemma} \label{BV-norm-equivalence}
	Let $f \in BV(a, b; X)$. Then
	\[ \frac{1}{b-a+1} \norm{f}_{BV(a, b; X)} \le \xnorm{f(a+)} + \essVar(f) \le \klammern{2 \lor \frac{1}{b-a}} \norm{f}_{BV(a, b; X)} . \]
\end{lemma}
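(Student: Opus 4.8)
The plan is to fix a representative $f_r$ of $f$ that is of bounded pointwise variation; such a representative exists because $\essVar(f) < \infty$, and by \Cref{BV-continuity} its right limit $f_r(a+)$ exists and, as already observed, coincides with $f(a+)$ regardless of the choice of $f_r$. Everything will follow from the single elementary estimate
\[ \xnorm{f_r(\tau) - f_r(a+)} \le \Var(f_r) \qquad \text{for all } \tau \in (a, b], \]
obtained by applying the definition of $\Var$ to the two-point partitions $\{a+\varepsilon, \tau\}$ and letting $\varepsilon \downarrow 0$.

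For the left-hand inequality, I would combine the triangle inequality with the estimate above to get $\xnorm{f_r(\tau)} \le \xnorm{f(a+)} + \Var(f_r)$ for every $\tau \in (a, b]$, integrate over $[a, b]$ to obtain $\norm{f}_{L^1(a, b; X)} \le (b-a)\klammern{\xnorm{f(a+)} + \Var(f_r)}$, and then take the infimum over all bounded-pointwise-variation representatives $f_r$ (representatives with unbounded pointwise variation contribute $\Var = \infty$ and are irrelevant to the infimum) to get $\norm{f}_{L^1(a, b; X)} \le (b-a)\klammern{\xnorm{f(a+)} + \essVar(f)}$. Adding $\essVar(f) \le \xnorm{f(a+)} + \essVar(f)$ then gives $\norm{f}_{BV(a, b; X)} = \norm{f}_{L^1(a, b; X)} + \essVar(f) \le (b-a+1)\klammern{\xnorm{f(a+)} + \essVar(f)}$.

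For the right-hand inequality, I would reverse the roles and write $\xnorm{f(a+)} \le \xnorm{f_r(\tau)} + \Var(f_r)$ for $\tau \in (a, b]$, integrate, divide by $b-a$, and pass to the infimum over representatives to obtain $\xnorm{f(a+)} \le \tfrac{1}{b-a}\norm{f}_{L^1(a, b; X)} + \essVar(f)$, hence $\xnorm{f(a+)} + \essVar(f) \le \tfrac{1}{b-a}\norm{f}_{L^1(a, b; X)} + 2\essVar(f)$. A case distinction finishes the proof: if $\tfrac{1}{b-a} \le 2$, the right-hand side is at most $2\norm{f}_{L^1(a, b; X)} + 2\essVar(f) = 2\norm{f}_{BV(a, b; X)}$; if $\tfrac{1}{b-a} > 2$, it is at most $\tfrac{1}{b-a}\klammern{\norm{f}_{L^1(a, b; X)} + \essVar(f)} = \tfrac{1}{b-a}\norm{f}_{BV(a, b; X)}$. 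In both cases the bound is $\klammern{2 \lor \tfrac{1}{b-a}}\norm{f}_{BV(a, b; X)}$.

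There is no serious obstacle; the only step demanding a little care is the interchange of the infimum defining $\essVar$ with the integral, which is justified because $f(a+)$ is one fixed vector shared by all relevant representatives and the unbounded-variation representatives do not lower the infimum of $\Var$. The remainder is the triangle inequality, monotone integration over $[a, b]$, and the trivial inequality $b-a+1 \ge 1$.
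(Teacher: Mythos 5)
Your proof is correct and follows essentially the same route as the paper's: the same two triangle-inequality estimates $\xnorm{f(\tau)} \le \xnorm{f(a+)} + \essVar(f)$ and $\xnorm{f(a+)} \le \xnorm{f(\tau)} + \essVar(f)$, integrated over $[a,b]$, with the final case distinction merely unfolding the $\lor$ that the paper writes in a single line. The only cosmetic difference is that you make the passage through a bounded-pointwise-variation representative and the infimum over representatives explicit, whereas the paper works directly with $\essVar$ and leaves that step implicit; both are fine.
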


\begin{proof}
	Let $f \in BV(a, b; X)$. Then
	\begin{align*}
		\frac{1}{b-a+1} \norm{f}_{BV(a, b; X)} &= \frac{1}{b-a+1} \klammern{ \int_a^b \xnorm{f (\tau)} \diff\tau + \essVar (f) } \\
		&\le \frac{1}{b-a+1} \klammern{ \int_a^b \klammern{ \xnorm{f (\tau) - f(a+)} + \xnorm{f(a+)} } \diff\tau + \essVar (f) } \\
		&\le \frac{1}{b-a+1} \klammern{ (b-a) \klammern{ \essVar(f) + \xnorm{f(a+)} } + \essVar (f) } \\
		&\le \xnorm{f(a+)} + \essVar(f) \\
		&= \frac{1}{b-a} \int_{a}^{b} \xnorm{f(a+)} \diff\tau + \essVar(f) \\
		&\le \frac{1}{b-a} \int_{a}^{b} \klammern{ \xnorm{f(a+) - f(\tau)} + \xnorm{f(\tau)} } \diff\tau + \essVar(f) \\
		&\le \frac{1}{b-a} \klammern{(b-a) \essVar(f) + \norm{f}_{L^1(a, b; X)}} + \essVar(f) \\
		&= \frac{1}{b-a} \norm{f}_{L^1(a, b; X)} + 2 \essVar(f) \\
		&\le \klammern{2 \lor \frac{1}{b-a}} \norm{f}_{BV(a, b; X)} .
	\end{align*}
	This completes the proof.
\end{proof}

The following theorem is a result from Camille Jordan, who introduced the notion of bounded variation in \cite{Jordan81} in 1881.

\begin{theorem}[Jordan decomposition]
	A function $f \colon [a, b] \to \R$ is of bounded pointwise variation if and only if there exist two increasing functions $f_+, f_- \colon [a, b] \to \R$, such that \[ f = f_+ - f_- . \numberthis \]
\end{theorem}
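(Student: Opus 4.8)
The plan is to prove the two implications separately, in both cases working directly from the definition of pointwise variation.

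For the direction asserting that a difference of increasing functions has bounded variation, suppose $f = f_+ - f_-$ with $f_+, f_- \colon [a,b] \to \R$ increasing. Given any finite sequence $t_0 < t_1 < \cdots < t_n$ in $[a,b]$, the triangle inequality gives $\sum_{k=0}^{n-1} |f(t_{k+1}) - f(t_k)| \le \sum_{k=0}^{n-1} |f_+(t_{k+1}) - f_+(t_k)| + \sum_{k=0}^{n-1} |f_-(t_{k+1}) - f_-(t_k)|$, and since $f_+$ and $f_-$ are increasing the absolute values may be dropped and each sum telescopes to $f_\pm(b) - f_\pm(a)$. Hence $\Var(f) \le (f_+(b) - f_+(a)) + (f_-(b) - f_-(a)) < \infty$.

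For the converse, assume $f$ is of bounded pointwise variation and introduce the variation function $V \colon [a,b] \to \R$ defined by $V(a) := 0$ and $V(t) := \Var\!\big(f|_{[a,t]}\big)$ for $t \in (a,b]$; this is well defined and finite because $\Var(f|_{[a,t]}) \le \Var(f) < \infty$. The crucial step is the additivity of the variation over adjacent intervals: for $a \le s \le t \le b$ one has $\Var(f|_{[a,t]}) = \Var(f|_{[a,s]}) + \Var(f|_{[s,t]})$. The inequality ``$\ge$'' follows by concatenating an arbitrary partition of $[a,s]$ with one of $[s,t]$ to form a partition of $[a,t]$; the inequality ``$\le$'' follows because inserting the point $s$ into an arbitrary partition of $[a,t]$ does not decrease the associated sum (triangle inequality) and the refined partition splits as a partition of $[a,s]$ followed by one of $[s,t]$.

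Granting additivity, the conclusion is immediate. For $s \le t$ we get $V(t) - V(s) = \Var(f|_{[s,t]}) \ge 0$, so $V$ is increasing; and $(V(t) - f(t)) - (V(s) - f(s)) = \Var(f|_{[s,t]}) - (f(t) - f(s)) \ge |f(t) - f(s)| - (f(t) - f(s)) \ge 0$, using that the two-point partition of $[s,t]$ already witnesses $\Var(f|_{[s,t]}) \ge |f(t) - f(s)|$, so $V - f$ is increasing as well. Setting $f_+ := V$ and $f_- := V - f$ yields the desired decomposition $f = f_+ - f_-$ into increasing functions. I expect the additivity lemma to be the only real obstacle; once it is in hand, the rest is a routine manipulation of the definition.
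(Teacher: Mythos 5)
Your proof is correct and follows essentially the same route as the paper's: both directions are handled by elementary partition manipulations, and for the nontrivial implication both arguments build the decomposition from the total variation function $V(t) = \Var\!\big(f|_{[a,t]}\big)$, using that $V(t) - V(s) \ge |f(t)-f(s)|$ on subintervals. The only (cosmetic) difference is the choice of the two monotone summands: you take $(f_+, f_-) = (V,\, V-f)$, while the paper takes $(f_+, f_-) = (f+V,\, V)$, and you spell out the additivity of variation over adjacent intervals explicitly where the paper uses it tacitly.
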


\begin{proof}
	If $f \colon [a, b] \to \R$ is of bounded pointwise variation, then for $t \in [a, b]$ we can define
	\begin{align*}
		f_+ (t) & \coloneqq f(t) + \Var( f |_{ [a, t] }) \quad \text{and} \quad f_- (t) \coloneqq \Var( f |_{ [a, t] } ) .
	\end{align*}
	For all $t, \hat t \in [a, b]$ with $t < \hat t$ we now have
	\begin{align*}
		f_+ (t) &= f(\hat t) + f(t) - f(\hat t) + \Var( f |_{ [a, t] } ) \\
		&\le f(\hat t) + \Var( f |_{ [t, \hat t] } ) + \Var( f |_{ [a, t] } ) \\
		&= f_+ (\hat t) .
	\end{align*}
	Thus, $f_+$ is increasing. Also $f_-$ is increasing and $f = f_+ + f_-$.
	
	To prove the implication in the other direction, note that for any $g, h \colon [a, b] \to \R$ we have
	\[ \Var(g + h) \le \Var(g) + \Var(h) . \]
	If we assume that $f_+, f_- \colon [a, b] \to \R$ are increasing, then 
	\[ \Var(f_+ - f_-) \le \Var(f_+) + \Var(-f_-) = f_+(b) - f_+(a) + f_-(a) - f_-(b) < \infty , \]
	so $f_+ - f_-$ is of bounded pointwise variation.
\end{proof}

\begin{lemma}
    If $f \in C^1([a, b])$, then
    \[ \Var (f) = \essVar(f) = \int_a^b |f'(\tau)| \diff\tau . \numberthis \label{wegintegral} \]
\end{lemma}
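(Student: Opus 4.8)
The plan is to establish the chain
\[ \int_a^b |f'(\tau)| \diff\tau \le \essVar(f) \le \Var(f) \le \int_a^b |f'(\tau)| \diff\tau , \]
which forces all three quantities to coincide. The inequality $\essVar(f) \le \Var(f)$ is immediate from the definition of the essential variation, since $f$ is continuous and hence is itself a bounded-pointwise-variation representative of its $L^1$-class. For $\Var(f) \le \int_a^b |f'(\tau)| \diff\tau$, I would fix an arbitrary increasing tuple $t_0 < t_1 < \ldots < t_n$ in $[a,b]$, write $f(t_{k+1}) - f(t_k) = \int_{t_k}^{t_{k+1}} f'(\tau) \diff\tau$ by the fundamental theorem of calculus, and apply the triangle inequality for the integral to get $\sum_k |f(t_{k+1}) - f(t_k)| \le \sum_k \int_{t_k}^{t_{k+1}} |f'(\tau)| \diff\tau \le \int_a^b |f'(\tau)| \diff\tau$; taking the supremum over all such tuples yields the claim.

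The remaining inequality $\int_a^b |f'(\tau)| \diff\tau \le \essVar(f)$ is the substantive one: I must show $\Var(g) \ge \int_a^b |f'(\tau)| \diff\tau$ for \emph{every} $g \colon [a,b] \to \R$ with $g = f$ almost everywhere. Fix such a $g$ and put $E \coloneqq \set{t \in [a,b] \colon g(t) = f(t)}$; since $[a,b] \setminus E$ is a null set, $E$ is dense in $[a,b]$. Fix $\varepsilon > 0$ with $2\varepsilon < b - a$ and set $M \coloneqq \max_{t \in [a,b]} |f'(t)|$. By uniform continuity of $f'$ on the compact interval $[a,b]$ there is $\delta > 0$ with $|f'(s) - f'(t)| < \varepsilon$ whenever $|s-t| \le \delta$. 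Using density of $E$, I would pick a strictly increasing tuple $t_0 < t_1 < \ldots < t_n$ with all $t_k \in E$, mesh at most $\delta$, $t_0 \le a + \varepsilon$ and $t_n \ge b - \varepsilon$. On each subinterval $[t_k, t_{k+1}]$, splitting off the constant $f'(t_k)$ gives $|f(t_{k+1}) - f(t_k)| = \betrag{\int_{t_k}^{t_{k+1}} f'(\tau) \diff\tau} \ge |f'(t_k)|(t_{k+1}-t_k) - \varepsilon(t_{k+1}-t_k)$, while $\int_{t_k}^{t_{k+1}} |f'(\tau)| \diff\tau \le |f'(t_k)|(t_{k+1}-t_k) + \varepsilon(t_{k+1}-t_k)$ since $|f'(\tau)| \le |f'(t_k)| + \varepsilon$; hence $|f(t_{k+1}) - f(t_k)| \ge \int_{t_k}^{t_{k+1}} |f'(\tau)| \diff\tau - 2\varepsilon(t_{k+1}-t_k)$. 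Summing over $k$ and using $g = f$ on $E$,
\[ \Var(g) \ge \sum_k |g(t_{k+1}) - g(t_k)| = \sum_k |f(t_{k+1}) - f(t_k)| \ge \int_{t_0}^{t_n} |f'(\tau)| \diff\tau - 2\varepsilon(b-a) \ge \int_a^b |f'(\tau)| \diff\tau - 2M\varepsilon - 2\varepsilon(b-a) . \]
Letting $\varepsilon \to 0$ gives $\Var(g) \ge \int_a^b |f'(\tau)| \diff\tau$, and taking the infimum over all representatives $g$ of $f$ yields $\essVar(f) \ge \int_a^b |f'(\tau)| \diff\tau$, closing the chain.

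I expect the only real obstacle — and it is minor — to lie in this last step: one must build the test partition simultaneously out of points of the full-measure set $E$ on which $g$ and $f$ agree (so that $\Var(g)$ genuinely sees the values of $f$ at those points), with mesh small enough to exploit the uniform continuity of $f'$, and with $t_0, t_n$ pushed near the endpoints so that essentially all of $\int_a^b |f'|$ is captured; the density of $E$ makes this routine. Everything else is a direct application of the fundamental theorem of calculus and the triangle inequality. The same argument applies verbatim to $X$-valued $f \in C^1([a,b];X)$ with $|\cdot|$ replaced by $\Xnorm{\cdot}$.
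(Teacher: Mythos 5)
Your proof is correct and takes a genuinely different route from the paper's. The paper splits $[a,b]$ according to the sign of $f'$, covers $\{f' > 0\}$ and its complement by intervals on which $f$ is monotone, uses $\Var(f|_I) = |f(\text{right})-f(\text{left})| = \betrag{\int_I f'}$ on each piece, and sums --- a Jordan-decomposition flavoured argument that fits naturally after the preceding theorem. You instead establish the two-sided chain $\int_a^b|f'| \le \essVar(f) \le \Var(f) \le \int_a^b |f'|$: the upper bound $\Var(f) \le \int_a^b |f'|$ from FTC and the triangle inequality, and the lower bound by building, for an arbitrary representative $g$, a fine partition out of the full-measure set $\{g = f\}$ so that uniform continuity of $f'$ pins both $|f(t_{k+1})-f(t_k)|$ and $\int_{t_k}^{t_{k+1}}|f'|$ to within $\varepsilon(t_{k+1}-t_k)$ of $|f'(t_k)|(t_{k+1}-t_k)$. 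Your route buys three things. First, it handles the equality $\essVar(f) = \Var(f)$ as part of the main estimate rather than asserting it (the paper simply states ``since $f$ is continuous, $\Var(f) = \essVar(f)$''). Second, since you never invoke monotonicity --- only the forward and reverse triangle inequalities for integrals --- your argument transfers verbatim to $X$-valued $f \in C^1([a,b];X)$, as you note, whereas the paper's argument is intrinsically scalar. Third, your proof does not require the complement of $\{f' > 0\}$ to be a countable union of closed intervals, which the paper's decomposition tacitly assumes but which can fail when $\{f' = 0\}$ is a Cantor-type set.
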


\begin{proof}
	Since $f'$ is continuous, the set $\Omega \coloneqq \set{t \in (a, b) \colon f'(t) > 0}$ is open, so there exists a sequence of intervals $(I_n)_{n \in \N}$, such that
	\begin{align*}
		I_{2n} \text{ is open for all }&n \in \N , & \bigcup_{n\in\N} I_{2n} &= \Omega , \\
		I_{2n-1} \text{ is closed for all }&n \in \N \quad \text{and} & \bigcup_{n\in\N} I_{2n-1} &= [a, b] \setminus \Omega .
	\end{align*}
	Let $(a_n)_{n\in\N}$ and $(b_n)_{n\in\N}$ be sequences in $[a, b]$ such that
	\[ I_{2n} = (a_{2n}, b_{2n}) \quad \text{and} \quad I_{2n-1} = [a_{2n-1}, b_{2n-1}] \]
	for all $n \in \N$.
	Now, $f|_{I_n}$ is monotone and $\Var(f|_{I_n}) = \Var((-f)|_{I_n})$ for all $n \in \N$. Therefore
	\begin{align*}
		\int_a^b |f'(\tau)| \diff\tau &= \int_\Omega f'(\tau) \diff\tau - \int_{[a, b] \setminus \Omega} f'(\tau) \diff\tau \\
		&= \sum_{n=1}^\infty \klammern{\int_{I_{2n}} f'(\tau) \diff\tau - \int_{I_{2n-1}} f'(\tau) \diff\tau} \\
		&= \sum_{n=1}^\infty \klammern{ f(b_{2n}) - f(a_{2n}) - (f(b_{2n-1}) - f(a_{2n-1})) } \\
		&= \sum_{n=1}^\infty \klammern{\Var(f|_{I_{2n}}) + \Var(f|_{I_{2n-1}})} \\
		&= \Var(f) .
	\end{align*}
	Since $f$ is continuous, $\Var(f) = \essVar (f)$.
\end{proof}

\paragraph{\bf Acknowledgment.} The author is grateful to Ralph Chill for his valuable insights, thoughtful comments, and constructive suggestions, which have played a significant role in shaping this work.

\bibliographystyle{plain}
\def\cprime{$'$}
  \def\ocirc#1{\ifmmode\setbox0=\hbox{$#1$}\dimen0=\ht0 \advance\dimen0
  by1pt\rlap{\hbox to\wd0{\hss\raise\dimen0
  \hbox{\hskip.2em$\scriptscriptstyle\circ$}\hss}}#1\else {\accent"17 #1}\fi}
  \def\cprime{$'$} \def\cprime{$'$} \def\cprime{$'$}

\end{document}